\def\prox{\displaystyle\mathop {\mbox{\rm prox}}}      % prox
\def\dom{\displaystyle\mathop {\mbox{\rm dom}}}    % domain
\def\sup{\displaystyle\mathop {\mbox{\rm sup}}}
\def\inf{\displaystyle\mathop {\mbox{\rm inf}}}
\def\argmin{\displaystyle\mathop {\mbox{\rm argmin}}}
\def\real{\mathbb R}
\newtheorem{thm}{Theorem}
\newtheorem{remark}{Remark}
\newtheorem{lem}{Lemma}[section]
\newtheorem{assumption}{Assumption}
\newtheorem{defin}{Definition}[section]
\newcommand{\bx}{{\bf x}}
\newcommand{\bz}{{\bf z}}
\newcommand{\rank}{\mbox{rank}}
\DeclareMathOperator{\crit}{Crit}
\begin{document}
%%%%%%%%%%%%%%%%

% Outcomment only when entries are known. Otherwise leave as is and 
%   default values will be used.
%\setcounter{page}{1}
%\VOLUME{00}%
%\NO{0}%
%\MONTH{Xxxxx}% (month or a similar seasonal id)
%\YEAR{0000}% e.g., 2005
%\FIRSTPAGE{000}%
%\LASTPAGE{000}%
%\SHORTYEAR{00}% shortened year (two-digit)
%\ISSUE{0000} %
%\LONGFIRSTPAGE{0001} %
%\DOI{10.1287/xxxx.0000.0000}%

% Author's names for the running heads
% Sample depending on the number of authors;
% \RUNAUTHOR{Jones}
%\RUNAUTHOR{Greenstein and Hallak}
% \RUNAUTHOR{Jones, Miller, and Wilson}
% \RUNAUTHOR{Jones et al.} % for four or more authors
% Enter authors following the given pattern:
%\RUNAUTHOR{}

% Title or shortened title suitable for running heads. Sample:
\title{An Augmented Lagrangian Approach to Composite Problems with a Random Linear Operator}
%\RUNTITLE{An Augmented Lagrangian Approach to Composite Problems with a Random Linear Operator}
% Enter the (shortened) title:
%\RUNTITLE{}

% Full title. Sample:
%\TITLE{An Augmented Lagrangian Approach to Composite Problems with a Random Linear Operator}
% Enter the full title:
%\TITLE{}

% Block of authors and their affiliations starts here:
% NOTE: Authors with same affiliation, if the order of authors allows, 
%   should be entered in ONE field, separated by a comma. 
%   \EMAIL field can be repeated if more than one author
\author[1]{Dan Greenstein}
\author[2]{Nadav Hallak}

\affil[ ]{The Faculty of Data and Decision Sciences \\
           The Technion -- Israel Institute of Technology \\
           Haifa, Israel}

\affil[1]{
  \texttt{sdngreen@campus.technion.ac.il}
}
\affil[2]{
  \texttt{ndvhllk@technion.ac.il}
}

\iffalse
\author[1]{\fnm{Dan} \sur{Greenstein}}\email{sdngreen@campus.technion.ac.il}
\author*[1]{\fnm{Nadav} \sur{Hallak}}\email{ndvhllk@technion.ac.il}

\affil*[1]{\orgdiv{The Faculty of Data and Decision Sciences}, \orgname{The Technion}, \orgaddress{\city{Haifa},  \country{Israel}}}
\fi
%\ARTICLEAUTHORS{%
%\AUTHOR{Dan Greenstein, Nadav Hallak}
%\AFF{The Faculty of Data and Decision Sciences, The Technion, Haifa, Israel, \EMAIL{sdngreen@campus.technion.ac.il}, \EMAIL{ndvhllk@technion.ac.il}}
%\AUTHOR{Author2}
%\AFF{Author2 affiliation, \EMAIL{}, \URL{}}
% Enter all authors
%} % end of the block

\maketitle

\abstract{%
    This paper studies nonconvex composite problems comprising the sum of a smooth function and the composition of an extended real-valued nonconvex function applied to a stochastic linear operation  accessible only via a sampling process.
    The generality of the model allows it to capture many applications in various fields, especially those involving stochastic nonconvex compositions such as nonconvex expectation constraints and nonconvex nonsmooth regularizers applied to stochastic linear mappings.  
    We develop an Augmented Lagrangian  based scheme that utilizes the interplay between three modular elements: a sampling regime, an alternating Augmented Lagrangian update, and an Augmented Lagrangian penalty adaptation procedure.
    The universality of our approach allows flexibility in its implementation, and  we further fine-tune its general form to exploit structural assumptions.
    By using the interactions between these modular elements of the method, we establish that every accumulation point of the sequence is almost surely a critical point of the underlying problem.  
}%

% Sample
 %\keywords{Augmented Lagrangian; nonconvex; sampling; nonsmooth; composite} 
%  existence of optimal policies; semi-Markov decision process; cyclic schedule}
%\MSCCLASS{Primary: 90B05; secondary: 90C40, 90C90}
%\ORMSCLASS{Primary: Inventory/production: deterministic multi-item;
%  secondary: dynamic programming/optimal control: deterministic 
%  semi-Markov; programming: infinite dimensional}
%\HISTORY{Received November 20, 2003; revised March 8, 2004, and March 26, 2004.}

% Fill in data. If unknown, outcomment the field
%\KEYWORDS{}
%\MSCCLASS{}
%\ORMSCLASS{Primary: ; secondary: }
%\HISTORY{}

%%%%%%%%%%%%%%%%%%%%%%%%%%%%%%%%%%%%%%%%%%%%%%%%%%%%%%%%%%%%%%%%%%%%%%

% Samples of sectioning (and labeling) in MOOR.
% NOTE: (1) all section levels end with a period,
%       (2) capitalization is as shown (sentence style, not title style).
%
%\section{Introduction.}\label{intro} %%1.
%\subsection{Duality and the classical EOQ problem.}\label{class-EOQ} %% 1.1.
%\subsection{Outline.}\label{outline1} %% 1.2.
%\subsubsection{Cyclic schedules for the general deterministic SMDP.}
%  \label{cyclic-schedules} %% 1.2.1
%\section{Problem description.}\label{problemdescription} %% 2.

\section{Introduction}
\subsection{Problem formulation}
This paper addresses the nonconvex nonsmooth composite model with a random linear operator  given by
\begin{equation}\label{eq:9}
\tag{P}
  \min\limits_{x\in\real^n} h(x)+P(\mathbb{E}[M]x),
\end{equation}

where $h: \real^n \rightarrow \real$ is a twice continuously differentiable function, $P: \real^n \rightarrow \real \cup \{\infty\}$ is a proper, lower semi-continuous (l.s.c)  prox-tractable function, $\mathbb{E}[\cdot]$ is an entrywise expectation operator, and  $M:\real^n\rightarrow\real^n$ is a random linear map with distribution $\pi$ having a surjective expectation matrix that is only accessible  via a sampling procedure.
%We assume that $M$ satisfies the following blanket assumption.
%\begin{assumption}\label{assum:1}
%	$\mathbb{E}[M]$ is a surjective linear map and $Var[M]$ is bounded.
%\end{assumption}

The structure of \eqref{eq:9} is exceedingly general due to the composition of the \textit{nonconvex extended real-valued} function $P$, as it can model smooth/nonsmooth nonconvex regularizers and constraints, with the random mapping $M$ that can capture realistic scenarios in which the data is sampled in a streaming manner during the optimization process (see e.g., \cite{hoi2021online}).
This positions  \eqref{eq:9} at the intersection of three branches within the optimization literature:  Lagrangian-based approach, stochastic nonconvex optimization, and optimization with nonconvex expectation constraints. 
Notably, in the context of \eqref{eq:9}, the literature in each branch is lacking, with the first being restricted to \textit{deterministic} nonconvex composite operations (\cite{LP15}, \cite{BN20}, \cite{bolte2018nonconvex}, \cite{CHT2021}, \cite{hallak2023adaptive}),  the second to nonconvex \textit{real-valued} compositions (\cite{wang2017accelerating}, \cite{ghadimi2020single}, \cite{ruszczynski2021stochastic}, \cite{chen2021solving}, \cite{jiang2022optimal}, \cite{xiao2022projection}), and the third to constraints that are differentiable, weakly convex, or a combination of a differentiable nonconvex function with a nonsmooth convex function (\cite{liu2019stochastic}, \cite{ma2019proximally}, \cite{boob2023stochastic}, \cite{li2023stochastic}).
To the best of our knowledge, our work is the first in the composite optimization literature to consider composition of a \textit{stochastic mapping} with an \textit{extended real-valued} function that captures this level of  generality of the  constraints.

% Another pertinent area of investigation is nonconvex optimization with nonconvex expectation constraints (\cite{liu2019stochastic}, \cite{ma2019proximally}, \cite{boob2023stochastic}, \cite{li2023stochastic}). This particular research domain, characterized by its relative scarcity, also accommodates nonconvex expectation constraints. Nonetheless, the supported constraints in this branch are distinguished by being either differentiable, weakly convex, or a combination of a differentiable nonconvex function with a nonsmooth convex function. As a result, they differ from the constraints addressed by our model.

%The problem in \eqref{eq:9} captures realistic scenarios such as  nonconvex models in which the data is received from a sampling procedure in a streaming manner during the optimization process and producing a continuous output (see e.g., \cite{hoi2021online} and references therein).
%In particular, it allows to transform the simpler deterministic nonconvex linear composition model, i.e., when $M$ has a one-time approximation, into an  ongoing optimization process.
%When the matrix is deterministic and known, our approach is reduced to a nonconvex AL scheme with an adaptive penalty.
Due to this general structure,  a broad range of problems and applications can be modeled by \eqref{eq:9}, some of which are shortly describe below.
\begin{enumerate} 
    \item Online sparse embedded vectors:
    Online linear embedding techniques like Incremental Principal Component Analysis (Incremental PCA) \cite{ross2008incremental}, Online PCA \cite{boutsidis2014online}, and Online ICA \cite{Akhtar2012onlineICA} are valuable tools often used in diverse applications. 
    See, for example, \cite{cardot2018online} that explores various online PCA algorithms and their applications, and \cite{antony2022classification} that provides an instance of applying online ICA, and references therein.
%    Linear embedding techniques such as Principle Componect Analysis (PCA) \cite{PCA_citation} and Independent Component Analysis (ICA) \cite{hyvarinen2000independent} are widely known and used. 
%    Some of these techniques have streaming versions, such as Incremental PCA \cite{ross2008incremental}, which are widely used as well. 
   Consider a setting in which data arrives as a stream of i.i.d linear embeddings $(M_1, M_2, \ldots)$ and is utilized online for an optimization task. To demonstrate the full potential of \eqref{eq:9}, consider a scenario where we aim to utilize only a subset of the embedded dimensions. For instance, with Incremental PCA as the embedding algorithm, our goal is to use only a subset of the principal components.   
   That is,
    \begin{equation*}
        \min\limits_{x\in \real^n} L(x) \text{ subject to } \|\mathbb{E}[M]x\|_0 \le k,
    \end{equation*}
    where $L$ is some loss function associated with the required task. 
    This can be encoded in our model via the choice $h(x) = L(x)$ and 
    \begin{equation*}
        P(u) = \begin{cases}
			0, & \|u\|_0 \le k,\\
            \infty, & \text{otherwise}.
		 \end{cases}
    \end{equation*}

    \item Blind image deconvolution \cite{campisi2017blind}: 
    In many applications %of {blind image deconvolution}, 
    such as microscopy, remote sensing, medical ultrasound, optical telescope systems or photography, the blur affecting the image
    %(see for example \cite{jansson2014deconvolution,campisi2017blind}) 
    can be estimated from experiments; see for example \cite[Section 1.3]{campisi2017blind} and \cite{fergus2006removing}. 
    The classical Tikhonov filtering model \cite{tikhonov1943stability},
    \begin{equation}\label{Tikhonov Filtering for Image Decovolution}
        \hat{f} = \argmin\limits_f \|\mathbb{E}[H]f - g\|^2 + \lambda\|f\|^2,
    \end{equation}
    is one approach that is used to recover the image; this can be encoded using \eqref{eq:9}. 

\iffalse
    \item Maximum singular value of the expectation of a random matrix:
    Finding the maximum singular value of the expectation of sampled matrix is a fundamental subroutine in many algorithms, such as the %the best known of which as 
    online variants of the Principal Component Analysis (PCA) problem. 
    This problem is also considered on its own, such as in \citet[Section 3]{garber2015online}, where it is studied in a stochastic bandits setting which can be captured by \eqref{eq:9} using the appropriate formulation.
\fi
%
%    A problem with an equivalent $\argmin$ is
%    \begin{equation*}
%        \argmin\limits_{x} \ln\left(\|x\|^2\right) - \ln\left(\|\mathbb{E}[A]x\|^2\right),
%    \end{equation*}
%    which can be modeled using \eqref{eq:9} using appropriate choices for $h$, $P$ and $M$.

    \item Inverse Reinforcement Learning (IRL): 
    IRL  is used to learn policy from demonstration; see \cite{arora2021survey}. 
    The underlying problem \eqref{eq:9} can capture IRL settings, such as the ones suggested in \cite{leibovich2022learning}, when the  approximation of the policy is linear with respect to the policy parameters. 

%    \item Stackelberg Competition \cite{von2010market}:
%    A sequential Stackelberg competitions in which the follower firm's actions are determined by a linear operation, can be modeled by \eqref{eq:9} in the realistic scenario in which the leading firm can only form a sampling-based estimator for the linear operation that affects the decisions of the follower firm.
    %a leader firm and a follower firm wish to maximize their utility, where the first decision is made by the leader, and the follower reacts to it. Consequently, the decision of the leader should take the best possible reaction of the follower into account. 

    %Model \eqref{eq:9} captures Stackelberg competition instances where the decisions of the follower firm are a linear mapping of the decisions of the leader firm, which the leader firm can only sample.

\end{enumerate}

%\dg{Data streaming Motivation starts here}
%
%An implicit motivation for the stochasticity in our model are settings in which the problem data arrives as a data stream, as opposed to data being sampled from an unchanging (possibly large) database. As discussed in \cite{hoi2021online}, such settings require continuous output. Therefore the option of collecting a sufficient number of data samples, generating an estimator for the data, and then solving a deterministic problem using the estimator, is not feasible. 
%
%\dg{Data streaming Motivation ends here}

\medskip

To tackle the inherent difficulties following from the  nonconvex extended real-valued composition applied to a random map, we combine three mechanisms: a sampling procedure, an alternating Augmented Lagrangian (AL) update, and an  AL penalty adaptation. 
These, together with additional technical elements,  form our proposed \textit{Iterative Sampling Alternating Directions (ISAD)} method  outlined below; its detailed schematics are given in  \Cref{alg:1}, in \Cref{sec: Iterative Sampling Alternating Direction Method of Multipliers}.

%These  introduce an adaptive Augmented Lagrangian (AL) -based method that utilizes a sampling mechanism we call  the \textit{Iterative Sampling Alternating Directions (ISAD)} method. 
%We present its outline below, and its detailed schematics in  \Cref{alg:1} in \Cref{sec: Iterative Sampling Alternating Direction Method of Multipliers}.
%
%
%To seek critical points of \eqref{eq:25} under the inherent difficulties following from the randomness of $M$, we develop an adaptive Augmented Lagrangian (AL) -based method that utilizes a sampling mechanism.
%This method is named the \textit{Iterative Sampling Alternating Directions (ISAD)} method. 
%We present its outline below, and its detailed schematics in  \Cref{alg:1} in \Cref{sec: Iterative Sampling Alternating Direction Method of Multipliers}.

\medskip

\noindent\fbox{%
    \parbox{\textwidth}{%
        \noindent \textbf{ISAD (\Cref{alg:1}) outline.}\\
Repeat:
\begin{enumerate}
    \item \textbf{Sampling mechanism (\Cref{definition:2}, \Cref{sec: Properties of the Sampling Mechanism}):} Sample $M$ to maintain an unbiased estimator of $\mathbb{E}[M]$;
    \item \textbf{Alternating Augmented Lagrangian update (\Cref{alg:ALAS}, \Cref{sec: Iterative Sampling Alternating Direction Method of Multipliers}):}  An alternating directions update;
    \item \textbf{Penalty adaptation (\Cref{adaptive_penalty_oracle_requirements_defin}, \Cref{alg_beta_oracle_simplified},  \Cref{general_beta_oracle}):} Update the penalty parameter as needed.
\end{enumerate}
            }%
}

\medskip

%An implicit assumption of our model is that $P$ is prox-tractable - meaning, that $prox_{\tau P}$ is easy to compute. The proximal operator will be discussed in \cref{sec:Notation and Preliminaries}.
These three ingredients comprising the proposed ISAD method  are  introduced in more detail in \Cref{sec:Mathematical Ingredients}.
Our theoretical analysis plays on the interplay between these ingredients to establish the ISAD's convergence guarantees.

To the best of our knowledge, ISAD is the first Augmented Lagrangian -based method that can handle a nonconvex extended real-valued composite optimization problem with a random linear operation.
Below we provide a comprehensive literature survey on the relevant research.  
%This paper develops the ISAD method, provides tailored customization for its elements depending on the specific structure of the problem, and establishes its theoretical guarantees. %todo DELETE?
%As a result, our approach lays foundations to develop AL type methods for random composite models, and in particular, the ISAD method can act as a prototype to design such methods.

\medskip

%Our contributions revolve the stochasticity of the linear mapping $M$:
%\begin{enumerate}
%	\item We establish convergence results for randomized mappings, under the assumption that $Var[M]$ is bounded. To the best of our knowledge, this is the first time such results appear in this literature.
%	
%	\item Using the additional assumption that each matrix entry $M^t_{i,j}$ has sub-Gaussian distribution, we improve the sample complexity of the algorithm considerably. The class of sub-Gaussian distributions includes bounded random variables, among others. 
%	
%	\item Using the assumption that $\nabla^2 h(x)$ is lower bounded by some known constant, we simplify the adaptive update of the penalty parameter.
%\end{enumerate}

\noindent \textbf{Literature review.} 
The underlying model and our approach, as indicated above, are mainly addressed by three branches in the optimization literature.
Accordingly, in this literature review we distinguish between the Lagrangian-based approach literature, the nonconvex stochastic composite optimization literature, and the nonconvex expectation constraints literature.

\noindent \textbf{Augmented Lagrangian literature.} 
The literature studying composite problems via an AL approach can be roughly divided into  two categories depending on the convexity, or the lack of convexity, of the composite function. 
To the best of our knowledge, our work is the first to address a composite problem with a random linear composition via the AL regardless of the convexity of the composite function.  
Even if we only consider expectation constraints, the only work we are aware of that uses AL is \cite{zhang2022solving} studying a fully convex model.

A key feature of the AL composite optimization literature, including this work, is its emphasis on lower semi-continuous, extended real-valued functions that are prox-tractable. This approach accommodates functions of interest that lack properties required by most algorithms, such as smoothness or even continuity-sparsity constraints and regularizations like $\ell_0$ regularization are prime examples. While this assumption may initially appear restrictive, as many functions are not prox-tractable, the use of the composition operator enables the encoding of a broad range of regularizations and constraints, largely offsetting this limitation.
%That is, all of the literature surveyed below studies \textit{deterministic} composite models -- models in which the composite operator is known and given. 

Problems with a convex composite structure are well-studied in the AL literature, but differ significantly in methodology and analysis from the nonconvex setup;  for a comprehensive review and references see \cite{SABACH2019401}.

In contrast, the AL literature on nonconvex composition remains limited.
The work \cite{LP15} is the first to propose and analyze a method for addressing the nonconvex composite linear model. Our approach, specifically the AL update in ISAD, draws inspiration from it. While \cite{LP15} relies on the surjectivity of the linear mapping and a large penalty parameter, we replace the latter with ISAD's penalty adaptation, avoiding the need to select an optimal penalty. The authors prove subsequence convergence generally, and show that if either $h$ or $P$ are coercive, the sequence is bounded, with full convergence when both are semialgebraic.

Expanding on \cite{LP15}, the work in \cite{BN20} explores a model where the differentiable component is only once-differentiable. 
The paper proposes a meta algorithm and two implementations, focusing on linear mappings with similar assumptions. 
The authors establish subsequence convergence to a critical point, sequence boundedness if either function is coercive, and full convergence under the Kurdyka-Lojasiewicz (KL) assumption. Additionally, they provide a convergence rate under the KL assumption, which was not addressed in \cite{LP15}.

%New Bolte et al literature review starts here
Turning to broader generalizations, \cite{bolte2018nonconvex} examines a broad class of problems using a general composite operator that captures most deterministic models. However, this generalization requires an "information zone," where the Jacobian of the nonlinear mapping operator is surjective, and parameters from this zone are incorporated into the penalty adaptation, which complicates the approach. We adopt their adaptive penalty concept but update it based on empirical Lipschitz continuity (introduced in \cref{sec:Lipschitz and Empirical Lipschitz Constants}), avoiding the need for these specific parameters.

In a different direction, \cite{CHT2021} studies a model that is the sum of three functions: (i) a lower semi-continuous function, (ii) a continuously differentiable function with a locally Lipschitz gradient, and (iii) a function with a Lipschitz continuous gradient, composed with a nonlinear mapping with a locally Lipschitz Jacobian. The model allows for easy inclusion of constraints independent of the mapping, at the cost of easy inclusion of constraints on the mapped variables, as the extended real-valued function is independent of the mapping. The authors propose a splitting algorithm, similar to Proximal ADMM, with an adaptive penalty determined by a backtracking algorithm. Subsequence convergence is proven generally, with full sequence convergence under the KL assumption.
  
More recently, the work \cite{hallak2023adaptive} was the first to overcome the surjective Jacobian requirement. 
The authors study a model similar to \cite{bolte2018nonconvex} and propose two algorithms: ALMS and RALMS. For ALMS, they prove subsequence convergence to an $\epsilon$-critical point, where $\epsilon$ is provided by the algorithm, and show that $\epsilon$-criticality is achieved in finite time. RALMS runs ALMS with a decreasing $\epsilon$, ensuring subsequence convergence to a critical point. However, unlike other works assuming surjectivity, \cite{hallak2023adaptive} does not guarantee full sequence convergence under the KL assumption.

We emphasize that none of the aforementioned studies can address our underlying model due to the unknown random matrix element.

\noindent \textbf{Nonconvex Stochastic Composite Optimization literature.} 
In contrast to the AL based literature, the papers exploring gradient and subgradient approaches to composite optimization extensively reference random mappings. However, it is important to highlight that these referenced works exhibit notable distinctions from our research along various dimensions.
% Unlike the AL literature, the gradient and subgradient approaches to composite optimization have references to random mappings. Accordingly, these results differ from ours along different parameters.

In \cite{wang2017accelerating}, the authors study a nonconvex composition model where expectations are applied to both the mapping and the outer function, with an additional convex, extended real-valued regularization term. The model assumes continuous differentiability of both the mapping and the outer function. While convergence rates are provided, the key differences from our work lie in their differentiability assumption and focus on real-valued functions, limiting their model's ability to handle nonconvex and noncontinuous constraints compared to ours.
Similarly, \cite{ghadimi2020single} provides a convergence rate for a nonconvex stochastic composition model, differing from our work in terms of the types of functions and constraints it can encode.

In contrast, \cite{ruszczynski2021stochastic} investigates a nested composition model with multiple layers applied to nonconvex, non-differentiable functions. The assumptions include access to Clarke subdifferential estimates, a compact feasible set, Lipschitz continuity, and bounded subgradients. Key differences from our work lie in these assumptions and the use of composition with real-valued functions, which limits the encoding of nonconvex and noncontinuous constraints.

Moreover, works such as \cite{chen2021solving}, \cite{jiang2022optimal} and \cite{xiao2022projection} focus on a stochastic nonconvex nested composition model with rates of convergence but assume smoothness in the composing functions. This, again, diverges from our work in a manner akin to our distinctions from \cite{wang2017accelerating}.

In summary, while the gradient and subgradient-based approaches in composite optimization can handle random mappings, their capacity to encode nonconvex constraints is limited. The predominant focus in the literature revolves around the composition of differentiable functions, and those addressing non-differentiable cases often rely on nontrivial additional assumptions.

\noindent \textbf{Nonconvex Expectation Constraints literature.}
The research on nonconvex optimization with nonconvex expectation constraints differs from our work in two key ways. First, there is a stark contrast in the types of constraints each model can handle. The nonconvex expectation constraints literature primarily focuses on constraints that are either differentiable, weakly convex, or a combination of a differentiable nonconvex function with a nonsmooth convex function. In contrast, our model can encode any linear composition of a prox-tractable function. Second, there is a difference in the target function context. Unlike the models in the nonconvex expectation constraints literature, our model can encode lower semi-continuous (l.s.c.) functions.

In \cite{liu2019stochastic}, the authors study a model with a differentiable nonconvex expectation function and nonconvex expectation constraints, along with compact convex constraints. They propose a constrained stochastic successive convex approximation algorithm and prove that each accumulation point is a stationary point.

Turning to \cite{ma2019proximally}, the authors examine a model with a weakly convex expectation function and weakly convex expectation constraints, along with compact convex constraints. Using an oracle for unbiased subgradient estimates, they solve the problem via a sequence of convex optimization problems, with a subgradient-based algorithm and a proven convergence rate to a stationary point.

In \cite{boob2023stochastic}, the target function and constraints are a sum of a nonconvex differentiable function and a nonsmooth convex function. The authors propose an algorithm that approximates a sequence of constrained convex optimization problems, with a proven convergence rate.

Finally, \cite{li2023stochastic} examines a model with a nonconvex differentiable expectation function and a convex nonsmooth extended real-valued function, along with equality constraints on a nonconvex expectation function. They propose an algorithm to find an $\epsilon$-KKT point, improving the convergence rate over \cite{ma2019proximally} and \cite{boob2023stochastic}.

\begin{remark}[Reduction to the Deterministic Case]
    If $M$ is deterministic, the algorithms we propose are reduced to deterministic versions that are similar to existing algorithms in the literature. If $h$ is known to be $\gamma$-smooth (has Lipschitz gradients with a \textit{known} parameter $\gamma$), the deterministic version of the algorithm described in \cref{sec: A refined method to bounded hessian problems} is equivalent to the algorithm proposed in \cite{LP15}. If $h$ does not have a known smoothness parameter (its gradient is either not Lipschitz continuous, or the Lipschitz constant of the gradient is unknown), the deterministic version of the algorithm described in \cref{sec: Algorithm Implementation For the General Case} bears strong resemblance to the ALBUM algorithm proposed in \cite{bolte2018nonconvex} applied to problems with linear mapping composition.
     Hence, our approach can be regarded in some sense as a natural shared generalization of the aforementioned methods.
\end{remark}

\medskip

\noindent \textbf{Contributions.}
We summarize our contributions as follows.
\begin{itemize}[label=--]
    \item \textbf{Innovative meta-Algorithm design:} We propose a meta-algorithm designed to tackle the (untreated by the literature) optimization problem  \eqref{eq:9}. 
    This meta-algorithm achieves subsequence convergence to a critical point almost surely and offers flexibility in choosing both the penalty adaptation method and the Bregman divergence factor.
    
    \item \textbf{Effective implementation with empirical Lipschitz constant:} We present a practical implementation strategy applicable to the general case, utilizing our empirical Lipschitz constant. 
    This implementation allows to enhance the algorithm's efficiency and adaptability across different scenarios.
    
    \item \textbf{Simplified approach for a smooth $h$:} For the specific case where the function $h$ is smooth, we provide a streamlined algorithm. This specialized approach leads to simplified computations the can prove advantageous when dealing with smooth objective functions.
\end{itemize}

% To summarize, the gradient and subgradient based approaches to composite optimization can certainly handle random mapping, but their ability to encode nonconvex constraints is more limited. Furthermore, the literature seems to be focused primarily on composition of differentiable functions. The works that do focus on the non-differentiable case require additional assumptions which are nontrivial.

\medskip 

\noindent \textbf{Outline.}
\Cref{sec:Mathematical Ingredients} introduces the three mathematical elements of our approach: the sampling mechanism, the AL update, and the penalty adaptation.
\Cref{sec: Iterative Sampling Alternating Direction Method of Multipliers} develops ISAD -- the meta algorithm for tackling \eqref{eq:9}, including its update mechanism and the Adaptive Penalty Oracle  (APO).
\Cref{sec: Properties of the Sampling Mechanism} is devoted to results following from the sampling procedure.
We then provide two implementations for ISAD depending on the given model. 
First, \Cref{sec: A refined method to bounded hessian problems} presents an ISAD implementation for problems in which the Hessian of $h$ is bounded, which simplifies many of the elements of our approach. 
In \Cref{sec: Algorithm Implementation For the General Case}, we provide a more complicated implementation for the ISAD designed to handle the full generality of our model.
Finally, the theoretical analysis of the ISAD is done in \Cref{sec: Meta Algorithm Convergence}.

\noindent \textbf{Proofs.}
Due to space constraints, and since many of the proofs in this paper are rather long, most proofs are deferred to the appendix.

\medskip

\noindent \textbf{Notation and Basic Definitions.}
%We use  common and standard notation from linear Algebra, matrix analysis, and convex/nonconvex analysis. 
%In particular, the proximal operator of $\tau P$ for $\tau>0$ is defined as:
%\begin{equation*}
%	u^+ \in \argmin\limits_{y} \tau P(y) + \dfrac{1}{2}\|y-u\|^2.
%\end{equation*}
Throughout this work, it is assumed that all the proximal operations are well-defined, and when the result is a set, there is a well-defined rule on how to chose an element from it. 
%- that is, that the minimizer exists. Note, however, that there might be more than one minimizer - in that case, we will require that our proximal operator will return one minimizer in the set of minimizers.

We use  common notation.
In particular, notation regarding nonsmooth optimization as defined in \cite{rockafellar2009variational}.
We denote $z\xrightarrow{f} x$ if $z\rightarrow x$ and $f(z)\rightarrow f(x)$. 
%The domain of a proper extended real valued function $f: \real^m \rightarrow \real \cup \{\infty\}$ is defined as $dom(f) \equiv \{x : f(x) < \infty\}$. 
There are several subdifferential definitions that are known in the literature, which are not equivalent. In this work, we use the general subdifferential (see \cite[Definition 8.3]{rockafellar2009variational}). The general subdifferential set of $f$ at point $x \in \dom{f}$ is denoted by $\partial f(x)$, and is given by
\begin{equation}\label{Subdifferential_def}
	\partial f(x) \equiv \left\{v \in \real^m : \exists x^t \xrightarrow{f} x, v^t \rightarrow v, \liminf_{z\rightarrow x^t} \dfrac{f(z)-f(x^t) - \langle v^t, z - x^t \rangle}{\|z-x^t\|} \ge 0 \ \forall t\right\}.
\end{equation}
As in \cite{rockafellar2009variational}, we refer to the general subdifferential simply as the subdifferential.

For a twice continuously differentiable function $\phi: \real^n \rightarrow \real$, the Bregman distance $D_{\phi}: \real^n \times \real^n \rightarrow \real$ is defined as:
\begin{equation} \label{eq:673}
	D_\phi (x_1, x_2) := \phi(x_1) - \phi(x_2) - \langle \nabla \phi(x_2), x_1 - x_2 \rangle.
\end{equation}
We denote  $[n] =\{1,\ldots,n\}$.
Unless stated otherwise,  $\|x\|$ will refer to the $\ell_2$ norm when applied to a vector $x \in \real^n$, and $\|X\|$ will refer to the $\ell_2$ induced norm $\|X\|_{2,2}$ when applied to $X \in \real^{m \times n}$. The only vector norm that will be used in practice is $\ell_2$. The results regarding matrix norms will always be stated with regards to the induced norm $\|X\|_{2,2}$, but the entry-wise $\ell_1$ norm $\|X\|_{1,1}$ will be used in the body of proofs.
%\dg{Think the following part is not used and can be deleted. Verifying, and will delete if this is indeed the case.}We adopt the following notation with regards to linear mappings: given a symmetric linear mapping $T$, we use $\|\cdot\|_{T}^2$ to denote the induced quadratic form of $T$, that is $\|\cdot\|_{T}^2 = \langle x, Tx \rangle$. Given a matrix $T \in \real ^{n \times n}$, we denote the matrix $T\cdot T$ as $T^2$.

For an extended real-valued function $f: \real^n \rightarrow \real \cup \{\infty\}$, and a constrant $\mu > 0$, the proximal operator is given by
\begin{equation}
    \prox\limits_{\mu f}(x) = \argmin\limits_{z \in \real^n} f(z) + \dfrac{1}{2\mu} \left\Vert z - x\right\Vert^2.
\end{equation}
We say that $f$ is a \textit{prox-tractable} function if
\begin{enumerate}
    \item $\prox\limits_{\mu f}(x) \ne \emptyset$ for all $x \in \real^n$ and all $\mu > 0$.
    \item An efficient algorithm for finding $z^* \in \prox\limits_{\mu f}(x)$ exists for all $x \in \real^n$ and $\mu > 0$.
\end{enumerate}
Notable examples of nonconvex \textit{prox-tractable} functions relate to sparsity. Both the $\ell_0$ regularization function $f(x) = \lambda \left\Vert x \right\Vert_0$, which counts non-zero elements, and the $\ell_0$ constraints indicator
\begin{equation*}
    \delta_{\left\Vert \cdot \right\Vert_0 \le k} = \begin{cases}
        0, & if \left\Vert x\right\Vert_0 \le k \\
        \infty, & otherwise
    \end{cases}
\end{equation*}
are \textit{prox-tractable}, since they have a closed form solution (taking the elements whose size is above $\lambda$, and taking the $k$ largest elements, respectively). For the quasi norm $\ell_p$ with $0 < p < 1$, which is often used as a continuous approximation of $\ell_0$, efficient iterative algorithms that converge to $z^* \in \prox\limits_{\mu f}(x)$ exist as well (see \cite{OBrienInexactQuasiNormProx}). Finally, if $x$ is a symmetric matrix and $f$ is a rank sparsity regularizer or a constraint set indicator, the proximal operator can be calculated given the singular value decomposition of $x$.

\section{Mathematical Ingredients}
\label{sec:Mathematical Ingredients}
This section introduces the three elements of our approach in more detail: (i) The sampling mechanism; (ii) The AL update; (iii) The penalty adaptation.
 
\subsection{The sampling mechanism}
 \label{sec:Properties of the linear map}
Our first blanket assumption regarding the random linear map $M$ is that we have access to a stream of unbiased i.i.d estimators of $\mathbb{E}[M]$. We formulate this assumption with respect to our main algorithm, \cref{alg:1}, which is formally defined in \cref{sec: Iterative Sampling Alternating Direction Method of Multipliers}.
\begin{assumption}[blanket assumption]\label{assum:iid}
    Let $\{M^i\}_{i>0}$ be a sequence of matrices sampled by \cref{alg:1}. Then $\{M_i\}_{i > 0}$ are independent and identically distributed.
\end{assumption}
 
To state our next blanket assumptions on the random linear map $M$, we first define its entrywise expectation and variance, and the notions of surjective and injective mappings.

 \begin{defin}\label{entrywise_expectation_and_variance}
     Let $M \sim \pi$, $M \in \real^{n \times m}$ be a random linear map. Then,
     \begin{enumerate}
         \item $\mathbb{E}[M] \in \real^{n \times m}$ is the entrywise expectation, given by 
         \begin{equation*}
             \left[\mathbb{E}\left[M\right]\right]_{i,j} = \mathbb{E}[M_{i,j}].
         \end{equation*}
         \item $Var[M] \in \real^{n \times m}$ is the entrywise variance, given by
         \begin{equation*}
             \left[Var\left[M\right]\right]_{i,j} = Var[M_{i,j}].
         \end{equation*}
     \end{enumerate}
 \end{defin}

We now define surjective and bijective mappings.
\begin{defin} [surjective / injective linear maps]
	A map $F: \real^n \rightarrow \real^m$ is called \textbf{surjective} if for every $y\in\real^m$, there exists $x\in\real^n$, such that $F(x) = y$. 
	A map $G: \real^m \rightarrow \real^n$ is called \textbf{injective} if  $y_1 \ne y_2 \in \real^m$ implies that $G(y_1) \ne G(y_2)$. 
	For $m<n$, a linear map $F: \real^n \rightarrow \real^m$ can be surjective if its row span is $m$, but cannot be injective (equivalently, $\rank(F)=m$). 
	Similarly, $G: \real^m \rightarrow \real^n$ can be injective if its column span is $m$, but cannot be surjective (equivalently, $\rank(G)=m$).
\end{defin}

 Our blanket assumption on the random linear map $M$ is given by \cref{assum:1}.
\begin{assumption}[blanket assumption]\label{assum:1}
	$\mathbb{E}[M]$ is a surjective linear map and $Var[M]$ is bounded.
\end{assumption}

This assumption can be extended to bijective mappings without loss of generality. Indeed, if $M$ is a surjective map from $\real^n$ to $\real^m$ ($m<n$), it can be extended to a mapping $M'$ that is bijective with probability $1$, by attaching randomly generated vectors to the matrix and updating the function $P$ accordingly.
To extend the surjective mapping $M$ to a mapping $M'$ that is surjective and injective almost surely, we use the following procedure:
\begin{enumerate}
    \item Generate $n-m$ independent identically distributed random vectors $\{v_i\}_{i=1}^{n-m}$, so that each vector is sampled uniformly from the unit sphere ($\{v \in \real^n \; | \; \|v\|=1\}$).
    
    \item For each sampled matrix $M^t \sim \pi$:
    \begin{enumerate}
        \item Concatenate $\{v_i\}_{i=1}^{n-m}$ to the rows of $M^t$ to generate a square matrix $M'_t$.
    \end{enumerate}
\end{enumerate}
Since the additional vectors are only sampled once, and are attached to each random mapping $M^t$, the extended expected mapping $\mathbb{E}[M']$ will contain the added rows $\{v_i\}_{i=1}^{n-m}$, too. 
{ The following lemma establishes that $\mathbb{E}[M']$ has full rank almost surely. We relegate the proof to \cref{sec:mathematical_ingredients_proofs}
\begin{lem}\label{lemma_matrix_extension}
	Let $\{M_t\}_{t\ge0}: \real^n \rightarrow \real^m$ be random linear maps sampled i.i.d from distribution $\pi$, such that $\mathbb{E}[M] \equiv \mathbb{E}[M_0]$ exists, and $\mathbb{E}[M]$ is surjective. 
	Let $\{v_i\}_{i=1}^{n-m}$ be i.i.d random vectors sampled uniformly from the unit sphere $\{v \in \real^n : \|v\|_2 = 1\}$. 
	Define $M^{'}_i : \real^n \rightarrow \real^n$ as $M_i$ with $\{v_j\}_{j=1}^{i}$ concatenated as the last $i$ rows. 
	
	Then $\mathbb{E}[M'] := \mathbb{E}[M^{'}_{n-m}]$ exists, and $\mathbb{E}[M']$ has full rank almost surely.
\end{lem}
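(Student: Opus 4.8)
The plan is to separate the two assertions of the lemma. For the existence of $\mathbb{E}[M']$, note that the vectors $\{v_i\}_{i=1}^{n-m}$ are drawn once and for all, so conditionally on them the map $M'_{n-m}$ is obtained from the random map $M_{n-m}$ by appending the \emph{deterministic} block $V\in\real^{(n-m)\times n}$ whose rows are $v_1,\dots,v_{n-m}$. Since $M_{n-m}$ is distributed as $M$ and $\mathbb{E}[M]$ exists by hypothesis, linearity of expectation gives
\[
\mathbb{E}[M'] \;=\; \begin{pmatrix}\mathbb{E}[M]\\[2pt] V\end{pmatrix}\;\in\;\real^{n\times n},
\]
which in particular is well defined.

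It then remains to prove that this $n\times n$ matrix has rank $n$ with probability one, the only randomness left being in $V$. Since $\mathbb{E}[M]$ is surjective, its row space $W_0:=\Range\big(\mathbb{E}[M]^T\big)$ is a subspace of $\real^n$ of dimension exactly $m$, and $\mathbb{E}[M']$ has full rank precisely when $\dim\big(W_0+\mathrm{span}\{v_1,\dots,v_{n-m}\}\big)=n$. I would establish this by induction on the partial spans $W_i:=W_0+\mathrm{span}\{v_1,\dots,v_i\}$, proving that almost surely $\dim W_i=m+i$ for every $0\le i\le n-m$; the base case $\dim W_0=m$ holds surely by surjectivity of $\mathbb{E}[M]$.

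For the inductive step, fix $1\le i\le n-m$ and condition on $v_1,\dots,v_{i-1}$. On the event $\{\dim W_{i-1}=m+i-1\}$ we have $\dim W_{i-1}\le n-1$, so $W_{i-1}$ is a proper linear subspace of $\real^n$. A uniform random point on the unit sphere lands in any fixed proper subspace with probability zero: such a subspace is contained in some hyperplane $\{x:a^Tx=0\}$ with $a\ne 0$, and $\{v\in S^{n-1}:a^Tv=0\}$ is a great subsphere of $S^{n-1}$, hence has surface measure zero, while the uniform law on $S^{n-1}$ is absolutely continuous with respect to surface measure. Therefore $v_i\notin W_{i-1}$ almost surely, and $\dim W_i=\dim W_{i-1}+1=m+i$ on that event. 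A union bound over the $n-m$ resulting null events yields $\dim W_{n-m}=n$ almost surely, i.e.\ $\mathbb{E}[M']$ has full rank almost surely.

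The one genuinely delicate point is the measure-theoretic claim above, together with the fact that it is being applied to the \emph{random} subspace $W_{i-1}$ rather than a fixed one; this is handled by the conditioning, since the probability bound ($=0$) is uniform over all proper subspaces and hence survives integration over the law of $v_1,\dots,v_{i-1}$. Everything else is elementary linear algebra and a finite induction.
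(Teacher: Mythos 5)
Your proof is correct and follows essentially the same route as the paper's: induction on the partial spans, with the key step being that a uniform point on the unit sphere lies in a fixed proper subspace with probability zero, chained via conditioning on the earlier vectors. If anything, your justification of that key step (a proper subspace meets $S^{n-1}$ in a great subsphere of surface measure zero, and the uniform law is absolutely continuous with respect to surface measure) is more precise than the paper's appeal to the subspace having Lebesgue measure zero in $\real^n$, which does not by itself address the measure of the sphere-supported distribution.
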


To utilize \cref{lemma_matrix_extension}, we can extend $P: \real^m \rightarrow \real$ to $P': \real^n \rightarrow \real$, by ignoring the last $n-m$ indices. That is
\begin{equation*}
    P'(u_1,\ldots,u_m, u_{m+1},\ldots,u_n) = P(u_1,\ldots,u_m).
\end{equation*}
Note that the first $m$ cells of $\mathbb{E}[M'] x$ are equal to $\mathbb{E}[M] x$, independently of our choice of random vectors $\{v_i\}_{i=1}^{n-m}$. Therefore, for every choice of vectors $\{v_i\}_{i=1}^{n-m}$ and every $x \in \real^n$,
\begin{equation*}
    h(x) + P(\mathbb{E}[M]x) = h(x) + P'(\mathbb{E}[M'] x).
\end{equation*}
}

Considering the above discussion, we make the following assumption throughout this paper without restating it.
\begin{assumption}[blanket assumption]\label{corollary:1}
	The expectation $\mathbb{E}[M]$ has full rank, meaning that there is a scalar $\sigma > 0$, such that $\lambda_{\min}(\mathbb{E}[M]^T \mathbb{E}[M]) = \sigma$.
\end{assumption}

Finally, we have an optional assumption, albeit one that greatly improves the sampling efficiency of our algorithm. To state it, we first define the notion of a sub-Gaussian random variable (RV), and discuss some of its properties. Both definition and properties are taken from \cite[Chapters 2.5-2.6]{vershynin_2018}. 

\begin{defin}[Sub-Gaussian random variable]\label{subgaussian_random_variable_defin}
    Let $X$ be a random variable. If there exists $K>0$ such that the moment generating function of $X$ satisfies $ \mathbb{E}[\exp{X^2 / K}] \le 2. $
%    \begin{equation*}
%        \mathbb{E}[\exp{X^2 / K}] \le 2.
%    \end{equation*}
    Then $X$ is called a sub-Gaussian random variable, and its norm is given by 
    \begin{equation*}
        \|X\|_{\psi_2} = \inf \{t > 0 \; : \; \mathbb{E}[\exp{X^2 / t^2}] \le 2\},
    \end{equation*}
\end{defin}
where the notation $\|\cdot\|_{\psi_2}$ refers to the fact that it is an Orlicz norm for the function $ \psi_2(u) = \exp \{u^2\} - 1$;
%\begin{equation*}
%    \psi_2(u) = \exp \{u^2\} - 1;
%\end{equation*}
see \cite[Section 2.7.1]{vershynin_2018} for additional details.

Useful properties of sub-Gaussian RVs are listed in \Cref{rem:1}.
\begin{lem}[Sub-Gaussians useful properties]
\label{rem:1}
     The following properties holds true: 
    \begin{enumerate}
        \item If $\mathbb{P}\left(X^2 > 0\right) > 0$, then $\|X\|_{\psi_2} > 0$. On the other hand, if $X=0$ almost surely, then $\|X\|_{\psi_2} = 0$. 

        \item For a random variable $X$, $\|X\|_{\psi_2} < \infty$ if and only if $X$ is sub-Gaussian. 

        \item If a random variable $Y$ is bounded almost surely, i.e., there exists $a \in \real_+$ such that $\mathbb{P}\left(|Y| \le a \right) = 1$, 
    then there exists $K > 0$ such that $\mathbb{E}[\exp{Y^2 / K}] \le 2 $.
%        \begin{equation*}
%            \mathbb{E}[\exp{Y^2 / K}] \le 2.
%        \end{equation*}
        Therefore, it follows that $Y$ is sub-Gaussian.

        \item If $X$ is a sub-Gaussian random variable, then $X-\mathbb{E}[X]$ is also sub-Gaussian. 
    \end{enumerate}
\end{lem}

With the definitions of sub-Gaussian random variables at hand, we can now state the optional assumption \cref{assum:4}. 

\begin{assumption}[optional assumption]\label{assum:4} For any $(i,j) \in [n]$ and any $t \ge 0$, the random variable $M^t_{i,j}$ sampled by \Cref{alg:1} is sub-Gaussian.
\end{assumption}

Equivalently, we assume that $M_t$ is a sub-Gaussian random matrix for all indices $t$.
\begin{assumption}[equivalent assumption to {\cref{assum:4}}]
    For any $t \ge 0$, $M_t$ is a sub-Gaussian matrix. That is, for any $u,v \in \real^n$ such that $\left\Vert v\right\Vert = \left\Vert u\right\Vert = 1$, 
    \begin{equation*}
        v^T \left(M_t\right) u
    \end{equation*}
    is a sub-Gaussian random variable.
\end{assumption}

It should be emphasized that \Cref{assum:4} is very general. 
Any random variable whose tail distribution can be bounded by a Gaussian random variable (up to a constant) is sub-Gaussian. 
This includes all bounded random variables, such as Bernoulli and Rademacher distributed random variables, and the Gaussian distribution. 

%The sampling mechanism controls the randomness induced to the optimization process by the use of an \textit{estimator instead of the expected value matrix}.
%Accordingly, it is designed based on the properties of $M$, which we now formally define  and establish before introducing the  sampling regime itself (cf. \Cref{definition:2}). 

%As a convention, whenever we refer to a sequence $\{M^i\}_{i>0}$ of matrices, we assume that these are independent and identically distributed matrices satisfying \Cref{corollary:1}, where $M^i \sim \pi$; note that $M \sim \pi$ as-well.

The sampling mechanism of the ISAD is designed so that $(\theta_{t+1}-\theta_t)$ matrices $\{M^i\}_{i=\theta_t + 1}^{\theta_{t+1}}$ are sampled at iteration $t$, 
%$$sample \; (\theta_{t+1}-\theta_t) \; matrices \; \{M^i\}_{i=\theta_t + 1}^{\theta_{t+1}},$$	
where the sampling regime determines the proportions of $\{ \theta_t\}_{t\geq 0}$; we arbitrarily choose its value, but it can be changed by a positive factor.
The exact details including theoretical implications of the sampling regime are deferred to  \Cref{sec: Properties of the Sampling Mechanism}.
\begin{defin}[Sampling regime]\label{definition:2}
Let $\epsilon > 0$ be an arbitrary positive constant. 
 The sampling regime is defined as follows: If \cref{assum:4} holds true, set $\theta_t = t^{1+\epsilon}$. 
 Otherwise, set $\theta_t = t^{2+\epsilon}$.
\end{defin}
%\begin{remark}
%	The sampling regime can be changed by a constant nonnegative factor, that is, we may choose $\theta_t = a t^{1+\epsilon}$ or $\theta_t = a t^{2+\epsilon}$ depending on \Cref{assum:4}, where $a> 0$. 
%For simplicity, we assume that $a = 1$. 
%\end{remark}

\subsection{The alternating Augmented Lagrangian}
The second element of ISAD is the alternating Augmented Lagrangian update.
Due to the combination of nonconvexity and nonsmootheness of the elements comprising  \eqref{eq:9}, our goal is to obtain points satisfying the so-called \textit{general Fermat condition} (see \cite[Theorem 10.1]{rockafellar2009variational}), usually termed in the literature as \textit{criticality}. 
This rule states that any local minimum is a critical point, where
%the zeros vector is contained in the subdifferential set, where the subdifferential is defined as in \eqref{Subdifferential_def}.
%That is, 
the set of critical points of $\psi: \real^n \rightarrow \real\cup\{-\infty,\infty\}$ is defined via a general subdifferential set \eqref{Subdifferential_def}  by $\crit{\psi} = \{x \in \real^n \; | \; 0 \in \partial \psi(x)\}.  $ This is a generalization of the well known Fermat condition for differentiable and unconstrained minimization, which states that $\nabla f(x) = 0$ is a necessary optimility condition for $\min\limits_{x \in \real^n} f(x)$.
%\begin{equation*}
%	\crit{\psi} = \{x \in \real^n \; | \; 0 \in \partial \psi(x)\}.  
%\end{equation*}

Furthermore, due to nonconvex nonsmooth composite structure of \eqref{eq:9}, we propose an alternating Augmented Lagrangian (AL) framework (see, e.g., \cite[Section 3.2.1]{B99}). 
To that end, let us first restate \eqref{eq:9} as follows: 
\begin{align}\label{eq:26}
	\min\limits_{x\in\real^n} \{ h(x)+P(y) : \text{ subject to }\mathbb{E}[M]x = y \}. 
%	\text{s.t. } \mathbb{E}[M]x = y. \nonumber
\end{align}
The Augmented Lagrangian of \eqref{eq:26} with a penalty parameter $\beta>0$ is then given by
\begin{equation}\label{eq:25}
	\mathcal{L}_\beta(x,y,z) = h(x) + P(y) - \langle z, \mathbb{E}[M] x\rangle + \langle z, y\rangle + \dfrac{\beta}{2}\|\mathbb{E}[M]x - y\|^2.
\end{equation}
Note that the optimizer \textit{does not} have access to $\mathbb{E}[M]$, and therefore, in the optimization process it is replaced by an \textit{online estimator} that is updated according to the sampling regime.

The alternation scheme we propose updates the decision variables $(x,y,z)$ \textit{based on an online estimator of $\mathbb{E}[M]$} using the AL operator $\mathcal{A}(\cdot)$ whose pseudocode is described by \Cref{alg:ALAS}. 

The $y$ update can be efficiently calculated since we assume that $P$ is \textit{prox-tractable}.
To calculate the $x$ update, we follow \cite{LP15} and utilize a Bregman divergence factor \eqref{eq:673} to allow flexibility in the update procedure in  \Cref{alg:ALAS}.
In particular, when $h$ has $\gamma$-Lipschitz gradient, $\phi(x) = \dfrac{\gamma}{2}\|x\|^2 - h(x)$ can be chosen, which results in 
\begin{align*}
    D_\phi(x,x_0) &= \dfrac{\gamma}{2}\|x\|^2 - h(x) - \dfrac{\gamma}{2}\|x_0\|^2 + h(x_0) - \langle \gamma x_0 - \nabla h(x_0), x-x_0 \rangle \\&= -h(x) + \dfrac{\gamma}{2}\|x-x_0\|^2 + \langle \nabla h(x_0), x - x_0 \rangle + h(x_0).
\end{align*}
Therefore, the $x$ update is reduced to
\begin{equation*}
    x \in \argmin\limits_x \{- \langle z_0, M' x \rangle + \dfrac{\beta_t}{2}\| M' x - y\|^2 + \dfrac{\gamma}{2} \|x-x_0\|^2 + \langle \nabla h(x_0), x - x_0 \rangle\},
\end{equation*}
where the transition from $\crit_x$ to $\argmin\limits_x$ is due to the strong convexity of the resulting subproblem.

%, based on $\bar{M}^t$, the estimator of $\mathbb{E}[M]$. 
%, followed by an update to the decision variables $(y,x,z)$ in the order of their appearance,  as described in . The variable $y$ is updated to a minimizer of the AL estimator (with $x$ and $z$ fixed), variable $x$ is updated to a critical point with respect to $x$ of the AL estimator (with added regularization), and variable $z$ is the multiplier and is updated by the standard formula. % $z_{t+1} = z_t - \beta_{t}\|\bar{M}^{t+1}x_{t+1} - y_{t+1}\|^2$. 
%The AL alternation scheme is denoted by the operator $\mathcal{A}(\cdot)$ whose pseudocode is described by \Cref{alg:ALAS}.

\medskip

\noindent	\begin{algorithm}[H]
	\caption{AL Alternation Scheme $\mathcal{A}(\cdot)$ }\label{alg:ALAS}
	\KwIn{$x_0, y_0, z_0 \in\real^n$, $M'\in \real^{n\times n}$,  $\beta>0$, $\phi: \real^n \rightarrow \real$ where $\phi\in C^2$ and convex.} 
	set $y \in \argmin\limits_y \{P(y) + \langle {z_0}, y \rangle +   \dfrac{\beta}{2} \|{M' x_0- y} \|^2 \}$\;
		
	set	$x \in \crit\limits_x \{h(x) - \langle {z_0}, M' x \rangle + \dfrac{\beta}{2} \|{M' x- y} \|^2 + D_{\phi}(x,x_0) \}$\;

	set	$z  = z_0 - \beta (M' x -y)$\;
	\textbf{return}	{$(y,x,z)$}\;
		
%	\For{$t=0,1,2, \ldots$}{
%		$sample \; (\theta_{t+1}-\theta_t) \; matrices \; \{M^i\}_{i=\theta_t + 1}^{\theta_{t+1}}$\;
%		$\bar{M}^{t+1} = \dfrac{\theta_t}{\theta_{t+1}}\bar{M}^{t} + \dfrac{1}{\theta_{t+1}} \sum\limits_{i=\theta_t + 1}^{\theta_{t+1}} M^i\;$
%		
%		$y_{t+1}\in \argmin\limits_y \{P(y) + \langle {z_t}, y \rangle +   \dfrac{\beta_t}{2} \|{\bar{M}^{t+1} x_t- y} \|^2 \}$ \;
%		
%		$x_{t+1}\in \crit\limits_x \{h(x) - \langle {z_t}, \bar{M^t} x \rangle + \dfrac{\beta_t}{2} \|{\bar{M}^{t+1} x- y_{t+1}} \|^2 + D_{\phi}(x,x_t) \}$\;
%
%		$z_{t+1} = z_t - \beta_t (\bar{M}^{t+1} x_{t+1}-y_{t+1})$
%		
%		$\beta_{t+1} \leftarrow APO(\bar{M}^{t+1}, \beta_{t})$ \tcp{Additional arguments may be provided, depending on the implementation}
%	}
\end{algorithm}

\medskip

It is established by \cite[Proposition 3.1]{bolte2018nonconvex}  that any critical point of  \eqref{eq:25} is a critical point of \eqref{eq:26}, that is, $(x,y,z) \in \crit{\mathcal{L}_\beta} \Rightarrow (x,y) \in \crit{\{h(x)+P(\mathbb{E}[M]x)\}}$.
However, this implication cannot be applied directly since $\mathbb{E}[M]$ is unknown throughout the optimization process -- this poses a significant challenge which we tackle in the analysis of ISAD by utilizing its different elements.

Another challenge common in nonconvex AL schemes is  in controlling the multiplier sequence to generate a descent property.
We achieve this by employing an adaptive penalty parameter for \eqref{eq:25}.

%\subsection{Random linear maps}

%New version beginning
\subsection{Penalty adaptation}
%\subsection{Lipschitz and Empirical Lipschitz Constants}
\label{sec:Lipschitz and Empirical Lipschitz Constants}

The penalty parameter in the AL function \eqref{eq:25} plays a crucial role in controlling the multiplier variable, here denoted by $z$. %, in the challenging composite nonconvex setting of \eqref{eq:9}.
Indeed, the literature so-far provides two strategies to bound the multiplier variable in the nonconvex AL setup. 
The first is assuming that it is chosen correctly using the problem's data and exploiting structural properties of the objective function  \cite{LP15,BN20}. 
The second is to utilize a penalty adaption procedure  during the optimization process \cite{bolte2018nonconvex,hallak2023adaptive}.

Due to our use of an online estimator instead of the expectation itself, we apply (nontrivial) penalty adaptation procedures depending on the given model.
Our procedures rely on the notion of local Lipschitz constants we call  \textit{empirical Lipschitz constants}. % to generate a descent property, thus implicitly controlling the effect of the multiplier update formula.

We note that local Lipschitzity is also exploited in an AL nonconvex deterministic linear model in \cite{CHT2021} 
%to generate descent 
via a dynamic backtracking procedure.
Notwithstanding, the use of the local Lipschitzity here and in \cite{CHT2021} is essentially different.
%the backtracking procedure of \cite{CHT2021} is designated explicitly to achieve a descent, while our use of the local Lipschitz continuity is more implicit. %, and comes into effect in a more elaborate manner via the AL penalty.
 
%The Lipschitz constant is a powerful tool in analysis of optimization algorithms, often used to determine the step size in gradient based algorithms, or the penalty parameter in AL based algorithms. However, the reliance on the Lipschitz constant has three significant downsides: 
%\begin{enumerate}
%    \item The Lipschitz constant does not always exist ;
%
%    \item Even if it exists, the Lipschitz constant is not always known ;
%
%    \item The global Lipschitz constant might be significantly larger than the actual differences in function values in a sequence generated by an optimization algorithm.
%\end{enumerate}
%To address these disadvantages, we introduce the \textit{empirical Lipschitz constant}. 

Let us now formalize the above introduction.
%Given a function $F: \real^k \rightarrow \real^l$, and a sequence $\{x_t\}_{t \ge 0}$, we define an \textit{empirical Lipschitz constant (eLip)} of $F$ with respect to the sequence $\{x_t\}_{t \ge 0}$ in the following manner.
\begin{defin}[empirical Lipschitz constant] \label{empirical_lipschitz_defin}
Let $F: \real^k \rightarrow \real^l$, and $\{x_t\}_{t \ge 0}\subseteq \real^k$. 
The empirical Lipschitz constant (eLip) of $F$ with respect to the sequence $\{x_t\}_{t \ge 0}$ is defined by
\begin{equation*}
L^e_F (\{x_t\}_{t \ge 0}) = \begin{cases}
			\sup\limits_{t \ge 0, x_{t+1} \ne x_t} \dfrac{\|F(x_{t+1}) - F(x_t)\|}{\|x_{t+1} - x_t\|}, & \exists t \; \text{such that $x_{t+1} \ne x_t$},\\
            0, & \text{otherwise}.
		 \end{cases}
\end{equation*}
\end{defin}
Empirical Lipschitz  constants have three advantages: (i) they can 
 exist even if a global   Lipschitz constant does not; (ii) they are computed online during the optimization process; and, (iii) compared to the  global Lipschitz constant (if exists), they can be much smaller.

Indeed, comparing \Cref{empirical_lipschitz_defin} with the definition of the  (global) Lipschitz constant of $F$,
\begin{equation*}
    L_F = \sup\limits_{y \ne x} \dfrac{\|F(x) - F(y) \|}{\|x - y\|},
\end{equation*}
immediately yields the following result whose proof is deferred to \cref{sec:Appendix_A}.
\begin{lem}\label{lemma_empirical_lipschitz_bounded_by_lipschitz}
Let $F: \real^k \rightarrow \real^l$. 
For every sequence $\{x_t\}_{t \ge 0}$ it holds that $ L^e_F(\{x_t\}_{t\ge0}) \le L_F.$
\end{lem}

The penalty adaptation procedures are defined per model but are referred to uniformly via the \textit{Adaptive Penalty Oracle  (APO)} -- see \Cref{adaptive_penalty_oracle_requirements_defin}.%  in the next section.

\section{Iterative Sampling Alternating Directions Method} \label{sec: Iterative Sampling Alternating Direction Method of Multipliers}
%\section{Iterative Sampling Alternating Direction Method of Multipliers}
The \textit{Iterative Sampling Alternating Directions (ISAD)} method in its meta-algorithm form using the ingredients from \Cref{sec:Mathematical Ingredients} is given in \cref{alg:1}.

%In plain words, each iteration of the algorithm begins by sampling the matrix according to the chosen sampling regime, and updating its estimator. 
%The method then performs the alternating AL update  with respect to the current matrix estimator. 
%Afterwards, the method updates the dual variable $z$ with respect to the new values of $x$ and $y$, the previous value of $z$, and the matrix estimator.  
%Finally, the penalty parameter is tuned using a subprocedure of the \textit{Adaptive Penalty Oracle  (APO)}.

The meta-algorithm provides two degrees of freedom to its implementation: the choice of the convex function $\phi$, and the implementation of the APO. The required characteristics of the APO are introduced in \Cref{adaptive_penalty_oracle_requirements_defin}. 
We will consider two implementations of the meta-algorithm: (i) an implementation for the case in which $\nabla^2 h(x)$ has a known lower and upper bounds (cf. \cref{sec: A refined method to bounded hessian problems}); (ii) an implementation for the general case (cf. \cref{sec: Algorithm Implementation For the General Case}). 
%\cref{sec: Properties of the Sampling Mechanism} will include probability bounds that will be used in \cref{sec: A refined method to bounded hessian problems}, \cref{sec: Algorithm Implementation For the General Case} and \cref{sec: Meta Algorithm Convergence}. The convergence of the meta algorithm will be proven in \cref{sec: Meta Algorithm Convergence}.

\Cref{alg:1} uses several parameters and notation. 
The sequence $\{\theta_t\}_{t>0}$ determines the amount of samples $\{M^i\} $ of the matrix $M$ that are collected by round $t$ -- at round $t+1$, we sample $\theta_{t+1} - \theta_t$ matrices. 
The decision variables of the AL function are $(x,y,z)$. %, where $z$ is the multiplier variable.
The function $\phi$ is the basis for the Bregman distance factor $D_\phi$ which is utilized in the update of $x$.
Finally, $\{ \beta_t \}$ is the sequence of  penalties updated adaptively using the APO.
%The parameters $x_0$ and $z_0$ are the initial values of the variables $x$ and $z$, respectively. 
%The parameter $\beta_0$ is the initial value of the penalty parameter $\beta$. 

\medskip
\normalem
\noindent	\begin{algorithm}[H]
	\caption{Meta-ISAD}\label{alg:1}
	\KwIn{$\bx_0\in\real^n$, $\bz_0\in\real^n$, $\beta_0>0, \{\theta_t\}_{t>0}$, $\phi: \real^n \rightarrow \real$ where $\phi\in C^2$ is convex.} 
	
	\For{$t=0,1,2, \ldots$}{
		sample $(\theta_{t+1}-\theta_t)$ matrices $\{M^i\}_{i=\theta_t + 1}^{\theta_{t+1}}$\;
		$\bar{M}^{t+1} = \dfrac{\theta_t}{\theta_{t+1}}\bar{M}^{t} + \dfrac{1}{\theta_{t+1}} \sum\limits_{i=\theta_t + 1}^{\theta_{t+1}} M^i\;$
		
		set $(y_{t+1},x_{t+1},z_{t+1})\leftarrow \mathcal{A}(y_{t},x_{t},z_{t}, \bar{M}^{t+1}, \beta_{t}, \phi )$\;
		
%		$y_{t+1}\in \argmin\limits_y \{P(y) + \langle {z_t}, y \rangle +   \dfrac{\beta_t}{2} \|{\bar{M}^{t+1} x_t- y} \|^2 \}$ \;
%		
%		$x_{t+1}\in \crit\limits_x \{h(x) - \langle {z_t}, \bar{M^t} x \rangle + \dfrac{\beta_t}{2} \|{\bar{M}^{t+1} x- y_{t+1}} \|^2 + D_{\phi}(x,x_t) \}$\;
%
%		$z_{t+1} = z_t - \beta_t (\bar{M}^{t+1} x_{t+1}-y_{t+1})$
		
		$\beta_{t+1} \leftarrow APO(\bar{M}^{t+1}, \beta_{t})$ \tcp{input depends on implementation and model structure}
	}
\end{algorithm}
\ULforem
\medskip

\cref{alg:1} achieves the following convergence guarantee, stated briefly and informally; the formal statement and proof are deferred to \cref{sec: Meta Algorithm Convergence}.
\begin{thm}
Let $\theta_t$ be chosen according to the sampling regime in \Cref{definition:2}, and let $\{x_t, y_t, z_t\}_{t > 0}$ be the sequence generated by \Cref{alg:1}.
Then for every cluster point $(x^*, y^*, z^*)$ of $\{x_t, y_t, z_t\}_{t > 0}$, $x^*$ is a critical point of \eqref{eq:9} almost surely.
\end{thm}
The following \Cref{assum:2} on the sequence generated by any implementation of \Cref{alg:1} will be part of our blanket assumptions throughout our analysis of \Cref{alg:1}. 
This assumption is common in the analysis of AL-based methods applied to nonconvex nonsmooth problems -- see \cite{bolte2018nonconvex,BST13,BN20,CHT2021,hallak2023adaptive}. %, \cite{BST13}, \cite{BN20}, \cite{CHT2021} and \cite{hallak2023adaptive}. 
In \cite{LP15,YPC17}, the boundedness of the sequence is not assumed, but coerciveness that implies boundedness in their studied model is assumed instead.

\begin{assumption}\label{assum:2} 
The sequence $\{x_t, y_t, z_t\}_{t>0}$ generated by \Cref{alg:1} is bounded.
\end{assumption}

The rest of this section will focus on the properties that are required from our APO, which will be formally defined in \cref{adaptive_penalty_oracle_requirements_defin}. 
Informally, we require that the number of updates to the value of the adaptive penalty $\beta$ be finite, and that after a finite number of iterations, the $x$ update will induce decrease in function value. %todo potential delete

We define two functions that will be used extensively during the analysis of the algorithm.
The first is the AL function with respect to a matrix $A$, which we denote by $\mathcal{L}_{\beta}(x, y, z; A)$:
\begin{equation}\label{lagrangian_estimator_defin}
    \mathcal{L}_{\beta}(x, y, z; A) := h(x) + P(y) - \langle z, A x - y \rangle + \dfrac{\beta}{2}\|A x - y\|^2.
\end{equation}
The second is  the function whose critical points we are searching for during the $x$ update:
\begin{equation}\label{g_function_defin}
    g^{t+1}(x) = h(x) - \langle z_t, \bar{M}^t x \rangle + \dfrac{\beta_t}{2}\| \bar{M}^{t+1}x - y_{t+1}\|^2 + D_{\phi}(x,x_t).
\end{equation}
Note that $g^{t+1}(x)$ is equal $\mathcal{L}_{\beta_t}(x, y_{t+1}, z_t; \bar{M}^{t+1}) + D_\phi(x,x_t)$ up to constant factors that do not depend on $x$.

With definition \eqref{g_function_defin} at hand, we can formulate our requirements from the APO in a clear and compact manner.
\begin{defin}[APO]\label{adaptive_penalty_oracle_requirements_defin}
An APO is a function that provides a positive scalar penalty parameter $\beta > 0$. When used in \Cref{alg:1}, it ensures that for any sequence of values $\{x_t, y_t, z_t\}_{t > 0}$ produced by the algorithm, and the corresponding functions $\{g^t\}_{t \geq 1}$ (as defined in \eqref{g_function_defin}), the following conditions hold after a certain point $K_{stable} > 0$:
\begin{enumerate}
    \item \textbf{Stability:} The penalty $\beta_k$ remains constant, i.e., $\beta_k = \beta_{K_{stable}}$ for all $k > K_{stable}$.
 
    \item \textbf{Sufficient Decrease:} There exists a constant $\rho > 0$ such that for all $k > K_{stable}$
    \begin{align*}
        g^{k+1}(x_{k+1}) - g^{k+1}(x_k) &\leq -\dfrac{\rho}{2} \|x_{k+1} - x_k\|^2
    \end{align*}
    Additionally, $\rho$ satisfies the inequality:
    \begin{align*}
        \rho &> \dfrac{8}{\beta_k \sigma} \left( \left( L^e_{\nabla h + \nabla \phi}(\{x_t\}_{t \geq 0}) \right)^2 + \left( L^e_{\nabla \phi}(\{x_t\}_{t \geq 0}) \right)^2 \right),
    \end{align*}
    where $L^e_{\nabla h + \nabla \phi}(\{x_t\}_{t \geq 0})$ and $L^e_{\nabla \phi}(\{x_t\}_{t \geq 0})$ are empirical Lipschitz constants defined in \Cref{empirical_lipschitz_defin}.
\end{enumerate}
\end{defin}

\iffalse
\begin{defin}[APO]%\label{adaptive_penalty_oracle_requirements_defin}
An APO is a function that returns a scalar penalty parameter $\beta > 0$, that satisfies, when invoked by \Cref{alg:1}, that for any sequence  $\{x_t, y_t, z_t\}_{t > 0}$ generated by \Cref{alg:1} with the corresponding sequence of functions $\{ g^t\}_{t\geq 1}$ defined by the formula in \eqref{g_function_defin} that
there exists an index $K_{stable} > 0$ such that: % todo STOP6
\begin{enumerate}
    \item \textbf{Stability:} $\beta_k = \beta_{K_{stable}}$ for all $k>K_{stable}$. 
 
    \item \textbf{Sufficient decrease:} There exists $\rho > 0$ such that for all $k>K_{stable}$
    \begin{align*}
        &g^{k+1}(x_{k+1}) - g^{k+1}(x_k) \le -\dfrac{\rho}{2}\|x_{k+1}-x_k\|^2 \text{ and,} \\
        &\rho > \dfrac{8}{\beta_k \sigma} \left(\left(L^e_{\nabla h + \nabla \phi}(\{x_t\}_{t\ge0})\right)^2 + \left(L^e_{\nabla \phi}(\{x_t\}_{t\ge0})\right)^2 \right),
    \end{align*}
%    
%    and
%    
%    \begin{equation*}
%        \rho > \dfrac{8}{\beta_k \sigma} \left(L^e_{\nabla h + \nabla \phi}(\{x_t\}_{t\ge0}) + L^e_{\nabla \phi}(\{x_t\}_{t\ge0}) \right),
%    \end{equation*}
%    
    where $L^e_{\nabla h + \nabla \phi}(\{x_t\}_{t\ge0})$ and $L^e_{\nabla \phi}(\{x_t\}_{t\ge0})$ are empirical Lipschitz constants defined in \Cref{empirical_lipschitz_defin}.
\end{enumerate}
\end{defin}
\fi

%\begin{remark}
%Note that the first part of \Cref{adaptive_penalty_oracle_requirements_defin} does not mean that the last update occurs on the $K_{stable}$ index -- it might have occurred beforehand.
%\end{remark}
\begin{remark}[APO existence]
    An APO that satisfies \cref{adaptive_penalty_oracle_requirements_defin} exists -- \cref{general_beta_oracle} implements such an oracle for the general case, while \cref{alg_beta_oracle_simplified} implements the oracle under the assumption that the hessian of $\nabla^2 h$ is bounded, and $\phi$ is chosen accordingly.
\end{remark}

We now turn to the in-depth analysis of the different parts of the ISAD method.

\section{The Sampling Mechanism} 
\label{sec: Properties of the Sampling Mechanism}
This section presents the properties of the sampling mechanism and their implications in detail.
These form the backbone of our analysis of the meta-algorithm and its instances. 

Our analysis and results will be mostly  given in terms of the \textit{component-wise deviation matrix} given by the difference between the estimator and the expected value of its corresponding random matrix variable.
We call this matrix the \textit{error of the matrix estimator}, and denote it by 
\begin{equation}
	\label{eq:error matrix general}
	\delta (\bar{M}, M) := \bar{M} - \mathbb{E}[M],
\end{equation}
where $\bar{M}$ is the estimator of $\mathbb{E}[M]$.
The definition in \eqref{eq:error matrix general} will almost always be given with respect to a sequence sampled by \Cref{alg:1}, and therefore, we will use the abbreviation  
\begin{equation}
\label{eq:error matrix}
    \delta_t := \delta (\bar{M}^t, M) = \bar{M}^t - \mathbb{E}[M],
\end{equation}
where we assume that the couple $(\bar{M}^t, M)$ is known from context; note that $\delta_t$ is a random variable until realized.

\medskip
To establish the essential properties of the random variable $\delta_t$, we will utilize classical elements from stochastic analysis, including the General Hoeffding's inequality \cite[Theorem 2.6.2]{vershynin_2018} and the Borel-Cantelli Theorem \cite[Theorem 11.1.1]{borovkov2013probability}, both of which are used without stating them, and the notion of \textit{infinitely often (i.o.)} occurring event, which is defined below. 
\begin{defin}[infinitely often (i.o.)]
Let $A_1, A_2, ...$ be a sequence of events in some probability space. Then the probability that infinitely many of them occur is denoted by

\begin{equation*}
%\mathbb{P} \left( A_n \; infinitely \; often \right) \equiv 
    \mathbb{P} \left( A_n \; i.o. \right) \equiv \mathbb{P} \left(\limsup_{n\to\infty} A_n \right) \equiv \mathbb{P} \left(\bigcap\limits_{n=1}^\infty \bigcup\limits_{k=n}^{\infty} A_k \right).
\end{equation*}
\end{defin}
We use $i.o.$ occurring events in our analysis to prove that $\mathbb{P} \left(\sum\limits_{t=1}^\infty \|\delta_t\|^2 < \infty \right) = 1$.
\begin{lem}\label{lem:not_io_then_bounded}
    Let $\{Y_t\}_{t > 0}$ a sequence of random variables. If there exists $\epsilon > 0$ such that 
    \begin{equation}\label{out_of_bound_finitely_often}
        \mathbb{P} \left(\| Y_t\| > O \left(\dfrac{1}{t^{0.5 + 0.25\cdot\epsilon}}\right) \; i.o. \right) = 0,
    \end{equation}
    then 
    \begin{equation*}
        \mathbb{P} \left(\sum\limits_{t=1}^\infty \|\delta_t\|^2 < \infty \right) = 1.
    \end{equation*}
\end{lem}
\begin{proof}[Proof of {\cref{lem:not_io_then_bounded}}]
    We will use the fact that for every $\epsilon > 0$, it holds that $ \sum\limits_{t=1}^\infty \dfrac{1}{t^{1+0.5\cdot\epsilon}} < \infty.$ Since \eqref{out_of_bound_finitely_often} holds true, there exists with probability $1$ a final index $K$, such that for all $t \ge K+1$, $\left\Vert Y_t\right\Vert \le O\left(\dfrac{1}{t^{0.5 + 0.25\epsilon}}\right)$. Since the image of $\|\delta_t\|$ is contained in $\real$, it follows that $\sum\limits_{t=1}^K \|\delta_t\|^2 < \infty$. Since for every $t \ge K+1$, $\|\delta_t\| \le \dfrac{1}{t^{0.5 + 0.25\cdot\epsilon}}$, it follows that
\begin{equation*}
    \sum\limits_{t=1}^\infty \|\delta_t\|^2 = \sum\limits_{t=1}^K \|\delta_t\|^2 + \sum\limits_{t=K+1}^\infty \|\delta_t\|^2 < \infty.
\end{equation*}
\end{proof}
\cref{lem:not_io_then_bounded} suggests that we should pick $\{\theta_t\}_{t>0}$ so that 
\begin{equation}
	\label{eq:949}
	\mathbb{P} \left(\|\delta_t\| > O \left(\dfrac{1}{t^{0.5 + 0.25\cdot\epsilon}}\right) \; i.o. \right) = 0
\end{equation}
 holds true. 
%To that end, we first discuss some well known results in probability.
%
%The first result is the General Hoeffding's inequality.
%
%\begin{theorem}
%[{\citet[Theorem 2.6.2, General Hoeffding's Inequality]{vershynin_2018}}]\label{lemma:2}
%Let $X_1, X_2, \ldots, X_m$ be independent, zero mean, sub-Gaussian random variables. 
%Then, for every $k \ge 0$, we have
%\begin{equation*}
%    \mathbb{P} \left( \left| \sum_{i=1}^m X_i \right| > k \right) \le 2 \exp{\left(- \dfrac{c k^2}{\sum_{i=1}^m \|X_i\|_{\psi_2}^2}\right)}.
%\end{equation*}
%\end{theorem}
%
%The second result is the Borel-Cantelli Theorem.
%
%\begin{theorem}[{\citet[Theorem 11.1.1, Borel-Cantelli Theorem]{borovkov2013probability}}]\label{thm:1}
%Let $\{A_i\}_{i=1}^\infty$ be an infinite sequence of events defined over a given probability space, and denote by $A = \bigcap\limits_{n=1}^\infty \bigcup\limits_{k=n}^{\infty} A_k$ the event that infinitely many events of the sequence $\{A_i\}_{i=1}^\infty$ occur. 
%If $\sum_{i=1}^\infty \mathbb{P} \left( A_i \right) < \infty$.
%Then $\mathbb{P} \left ( A \right) = 0.$
%%\begin{equation*}
% %   \sum_{i=1}^\infty \mathbb{P} \left( A_i \right) < \infty,
%%\end{equation*}
%%then
%%\begin{equation*}
%%    \mathbb{P} \left ( A \right) = 0.
%%end{equation*}
%\end{theorem}
As we stated previously, the sampling rate regime, expressed by the sequence $\{ \theta_t\}_{t \ge 0}$, is  indeed chosen to guarantee that \eqref{eq:949} holds true.
%\begin{equation*}
%    \mathbb{P} \left(\|\delta_t\| > O \left(\dfrac{1}{t^{0.5 + 0.25\cdot\epsilon}}\right) \; i.o. \right) = 0.
%\end{equation*}

In the general case, $\theta_t$ should be proportional to $t^{2+\epsilon}$, where $\epsilon>0$ is arbitrarily small -- this is described by \Cref{lem:1}.
When the elements of the sampled matrix are sub-Gaussian, we derive a looser sampling rate proportional to $t^{1+\epsilon}$ (once again, $\epsilon > 0$ is arbitrarily small) -- as stated in \Cref{lem:2}.
Both lemmas are based on the Borel-Cantelli Theorem. 
\Cref{lem:1} additionally uses Chebyshev's inequality, while \Cref{lem:2} uses the General Hoeffding's Inequality.

\begin{lem}\label{lem:1}
Let $\{M^t\}_{t>0}$ be a sequence of i.i.d random matrices with distribution $M^i \sim \pi$ sampled by \Cref{alg:1} with $\epsilon > 0$ and $\theta_t = t^{2+\epsilon}$. 
Then,
\begin{equation*}
    \mathbb{P} \left(\|\delta_t\| > O \left(\dfrac{1}{t^{0.5 + 0.25 \epsilon}} \right) \; i.o. \right) = 0.
\end{equation*}
\end{lem}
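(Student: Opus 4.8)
The plan is to bound the tail probability $Prob(\|\delta_t\| > t^{-(0.5+0.25\epsilon)})$ for each fixed $t$, show that these probabilities are summable over $t$, and then invoke the Borel--Cantelli Theorem (\Cref{thm:1}) to conclude that the event occurs only finitely often almost surely. The main tool for the per-$t$ bound is Chebyshev's inequality applied entrywise to the estimator error matrix, using that $\bar{M}^t$ is an average of $\theta_t = t^{2+\epsilon}$ i.i.d.\ copies of $M$ with bounded variance (\Cref{assum:1}).

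First I would reduce the spectral norm $\|\delta_t\| = \|\delta_t\|_{2,2}$ to something entrywise computable: since $\|X\|_{2,2} \le \|X\|_{1,1}$ (as noted in the notation section, the entrywise $\ell_1$ norm dominates the induced $\ell_2$ norm), it suffices to control $\sum_{i,j} |(\delta_t)_{i,j}|$, and by a union bound it suffices to control each $|(\delta_t)_{i,j}|$ individually. Each entry $(\delta_t)_{i,j} = \bar{M}^t_{i,j} - \mathbb{E}[M_{i,j}] = \frac{1}{\theta_t}\sum_{k=1}^{\theta_t}(M^k_{i,j} - \mathbb{E}[M_{i,j}])$ is an average of $\theta_t$ i.i.d.\ zero-mean random variables, each with variance at most some uniform bound $v$ (from the boundedness of $Var[M]$). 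Hence $\mathrm{Var}((\delta_t)_{i,j}) \le v/\theta_t = v\,t^{-(2+\epsilon)}$. Chebyshev then gives, for a threshold of the form $c\, t^{-(0.5+0.25\epsilon)}$,
\begin{equation*}
    Prob\left(|(\delta_t)_{i,j}| > c\, t^{-(0.5+0.25\epsilon)}\right) \le \frac{v\, t^{-(2+\epsilon)}}{c^2\, t^{-(1+0.5\epsilon)}} = \frac{v}{c^2}\, t^{-(1+0.5\epsilon)}.
\end{equation*}

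Then I would take a union bound over the $n^2$ entries and over the (constant number of) terms needed to split the $\ell_1$ norm threshold appropriately, absorbing the $n^2$ factor and constants into the $O(\cdot)$ in the statement; this yields $Prob\left(\|\delta_t\| > O(t^{-(0.5+0.25\epsilon)})\right) \le C\, t^{-(1+0.5\epsilon)}$ for a constant $C$ depending on $n$, $v$, $c$. Since $\sum_{t=1}^\infty t^{-(1+0.5\epsilon)} < \infty$ (the $p$-series with $p = 1 + 0.5\epsilon > 1$, exactly as recalled in the discussion preceding the lemma), the hypothesis of \Cref{thm:1} is met with $A_t = \{\|\delta_t\| > O(t^{-(0.5+0.25\epsilon)})\}$, and Borel--Cantelli gives $Prob(A_t \text{ i.o.}) = 0$, which is the claim.

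The main obstacle is bookkeeping rather than conceptual: I must be careful that the $O(\cdot)$ in the statement is interpreted as "there is a fixed constant $c$ such that the probability of $\|\delta_t\|$ exceeding $c\,t^{-(0.5+0.25\epsilon)}$ is zero i.o." — i.e.\ the constant is chosen once and not allowed to depend on $t$ — and that the passage from the entrywise bound to the spectral-norm bound only costs a dimension-dependent constant, which is fine since $n$ is fixed. A secondary point worth stating explicitly is that $Var[M]$ bounded (\Cref{assum:1}) indeed implies a uniform bound on the variance of every scalar entry $M_{i,j}$, so Chebyshev applies uniformly in $(i,j)$.
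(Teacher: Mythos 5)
Your proposal is correct and follows essentially the same route as the paper's proof: an entrywise variance computation giving $\mathrm{Var}[(\delta_t)_{i,j}] = \mathrm{Var}[M_{i,j}]/\theta_t$, Chebyshev with threshold $t^{-(0.5+0.25\epsilon)}$, a union bound over the $n^2$ entries combined with equivalence of matrix norms, and Borel--Cantelli via summability of $t^{-(1+0.5\epsilon)}$. Your explicit remarks on the fixed-constant interpretation of the $O(\cdot)$ and on the uniform entrywise variance bound are consistent with (and slightly more careful than) the paper's treatment.
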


\begin{lem}\label{lem:2} 
Let $\{M^t\}_{t>0}$ be a sequence of i.i.d random matrices with distribution $M^i \sim \pi$ sampled during the execution of \Cref{alg:1} with $\theta_t = t^{1+\epsilon}$ for some $\epsilon > 0$. 
Assume that for every $i,j \in [n]$ and every $t \ge 0$, $M^t_{i,j}$ is sub-Gaussian.
Then,
\begin{equation*}
    \mathbb{P} \left(\|\delta_t\| > O \left(\dfrac{1}{t^{0.5 + 0.25 \cdot \epsilon}} \right) \; i.o. \right) = 0.
\end{equation*}
\end{lem}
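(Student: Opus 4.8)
The plan is to mirror the proof of \Cref{lem:1} almost line for line, replacing the use of Chebyshev's inequality with the General Hoeffding inequality (\Cref{lemma:2}), which gives exponential rather than polynomial tail decay and thus allows the weaker sampling rate $\theta_t = t^{1+\epsilon}$. First I would write $[\delta_t]_{i,j} = \bar{M}^t_{i,j} - [\mathbb{E}[M]]_{i,j} = \frac{1}{\theta_t}\sum_{k=1}^{\theta_t}\bigl(M^k_{i,j} - [\mathbb{E}[M]]_{i,j}\bigr)$, observing that each summand $X_k := M^k_{i,j} - [\mathbb{E}[M]]_{i,j}$ is zero-mean, independent across $k$, and Sub-Gaussian by \Cref{assum:4} together with item 4 of \Cref{rem:1} (centering preserves the Sub-Gaussian property). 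Moreover, since the $M^k$ are i.i.d., all the $X_k$ share the same Sub-Gaussian norm, say $\|X_k\|_{\psi_2} = c_{i,j} < \infty$, so $\sum_{k=1}^{\theta_t}\|X_k\|_{\psi_2}^2 = \theta_t c_{i,j}^2$.

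Next I would apply \Cref{lemma:2} to $\sum_{k=1}^{\theta_t} X_k = \theta_t [\delta_t]_{i,j}$: for $\eta > 0$,
\begin{equation*}
    Prob\left(|[\delta_t]_{i,j}| > \eta\right) = Prob\left(\left|\sum_{k=1}^{\theta_t} X_k\right| > \theta_t \eta\right) \le 2\exp\left(-\frac{c\,\theta_t^2\eta^2}{\theta_t c_{i,j}^2}\right) = 2\exp\left(-\frac{c\,\theta_t \eta^2}{c_{i,j}^2}\right).
\end{equation*}
Now I would substitute $\theta_t = t^{1+\epsilon}$ and $\eta = t^{-(0.5 + 0.25\epsilon)}$, so that $\theta_t \eta^2 = t^{1+\epsilon}\cdot t^{-(1 + 0.5\epsilon)} = t^{0.5\epsilon}$, yielding a bound of the form $2\exp(-c'_{i,j} t^{0.5\epsilon})$ for the event $|[\delta_t]_{i,j}| > t^{-(0.5+0.25\epsilon)}$. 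Then, exactly as in \Cref{lem:1}, I would pass from the entrywise events to $\|\delta_t\|_{1,1}$ via the pigeonhole/union bound (if $\|\delta_t\|_{1,1} > n^2 t^{-(0.5+0.25\epsilon)}$ then some $|[\delta_t]_{i,j}|$ exceeds $t^{-(0.5+0.25\epsilon)}$), then to $\|\delta_t\| = \|\delta_t\|_{2,2}$ via equivalence of matrix norms, absorbing constants into the big-$O$. This gives $Prob\left(\|\delta_t\| > O(t^{-(0.5+0.25\epsilon)})\right) \le 2n^2 \exp(-c'' t^{0.5\epsilon})$ for some $c'' > 0$.

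Finally, since $\sum_{t=1}^\infty \exp(-c'' t^{0.5\epsilon})$ converges (an exponential in a positive power of $t$ is summable — e.g. by comparison with a geometric series, or because $\exp(-c'' t^{0.5\epsilon}) = o(t^{-2})$), the Borel--Cantelli theorem (\Cref{thm:1}) applied to the events $A_t \equiv \{\|\delta_t\| > O(t^{-(0.5+0.25\epsilon)})\}$ gives $Prob(A_t \; i.o.) = 0$, which is the claim. I do not anticipate a genuine obstacle here; the only point requiring a little care is the justification that centering preserves the Sub-Gaussian property with a finite norm so that $\sum \|X_k\|_{\psi_2}^2$ is actually finite — but this is exactly item 4 of \Cref{rem:1} — and the observation that the $\psi_2$-norms are all equal by the i.i.d.\ assumption, which is what makes the exponent scale linearly in $\theta_t$. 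The rest is the same bookkeeping (pigeonhole, norm equivalence, Borel--Cantelli) already carried out in the proof of \Cref{lem:1}.
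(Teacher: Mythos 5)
Your proposal is correct and follows essentially the same route as the paper: centering preserves the Sub-Gaussian property, the i.i.d.\ assumption makes the $\psi_2$-norms equal so the General Hoeffding exponent scales as $\theta_t\eta^2 = t^{0.5\epsilon}$, and the pigeonhole/norm-equivalence/Borel--Cantelli bookkeeping is identical (applying Hoeffding to $\sum_k X_k$ with threshold $\theta_t\eta$ rather than to $\frac{1}{\theta_t}\sum_k X_k$ with threshold $\eta$ is an immaterial rescaling). The only point the paper treats explicitly that you elide is the degenerate case $\|X_k\|_{\psi_2}=0$ (i.e.\ $[\delta_t]_{i,j}=0$ almost surely), where the Hoeffding denominator vanishes but the bound holds trivially.
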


Note that in both lemmas, $\epsilon$ can be chosen to be arbitrarily small. If the sampling mechanism generates matrices $\{M^i\}_{i=1}^\infty$ of a sub-Gaussian distribution as stated in the conditions of $\Cref{lem:2}$, we can choose $\theta_t = t^{1+\epsilon}$ so that $\theta_t$ is arbitrarily close to being linear in $t$.

The next claim, whose proof is given in \cref{sec:Appendix_A}, states a technical result on the  convergence of the eigenvalues $\lambda_{\min}((\bar{M^k})^T \bar{M^k})$ to $\lambda_{\min}(\mathbb{E}[M]^T \mathbb{E}[M])$. It will be used in \Cref{sec: A refined method to bounded hessian problems} and \Cref{sec: Algorithm Implementation For the General Case}. 

\begin{lem}\label{lem:3}
Let $\{\bar{M}^t\}_{t>0}$ be a sequence of unbiased estimators of $M$, such that $\bar{M}^t \xrightarrow{t\to\infty} \mathbb{E}[M]$ almost surely. 
For any $\epsilon \in (0, 1)$, with probability 1, there exists $K>0$ such that
\begin{equation*}
    (1-\epsilon)\sigma \le \lambda_{\min}((\bar{M^k})^T \bar{M^k}) \le (1 + \epsilon) \sigma, \qquad \forall k>K,
\end{equation*}
and 
\begin{equation*}
    (1-\epsilon)\lambda_{\min}((\bar{M^k})^T \bar{M^k}) \le \sigma \le (1 + \epsilon) \lambda_{\min}((\bar{M^k})^T \bar{M^k}), \qquad \forall k>K,
\end{equation*}
where $\sigma = \lambda_{\min}(\mathbb{E}[M]^T \mathbb{E}[M])$, as defined in \Cref{corollary:1}. 
\end{lem}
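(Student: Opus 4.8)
The plan is to derive everything from the continuity of the map $A \mapsto \lambda_{\min}(A^T A)$ together with the almost-sure convergence $\bar{M}^t \to \mathbb{E}[M]$. First I would fix a realization of the sequence $\{\bar M^t\}$ lying in the probability-one event on which $\bar M^t \to \mathbb{E}[M]$; on this event all the claimed statements will hold, so the "with probability 1" in the conclusion is immediate once we establish them pointwise on this event. From here on the argument is deterministic.

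The key analytic input is that the smallest-eigenvalue functional is continuous. Concretely, the map $A \mapsto A^T A$ is continuous (it is polynomial in the entries of $A$), and the map $B \mapsto \lambda_{\min}(B)$ on symmetric matrices is continuous (it is $1$-Lipschitz in the operator norm, by Weyl's inequality, or one can invoke the min-max characterization $\lambda_{\min}(B) = \min_{\|v\|=1}\langle v, Bv\rangle$). Composing, $A \mapsto \lambda_{\min}(A^T A)$ is continuous, so $\bar M^t \to \mathbb{E}[M]$ forces
\begin{equation*}
    \lambda_{\min}\big((\bar M^t)^T \bar M^t\big) \;\longrightarrow\; \lambda_{\min}\big(\mathbb{E}[M]^T \mathbb{E}[M]\big) \;=\; \sigma .
\end{equation*}
Since $\sigma > 0$ by \Cref{corollary:1}, given $\epsilon \in (0,1)$ I would apply the definition of convergence with tolerance $\epsilon \sigma$: there is $K > 0$ such that $|\lambda_{\min}((\bar M^k)^T \bar M^k) - \sigma| \le \epsilon \sigma$ for all $k > K$, which is exactly the first chain of inequalities $(1-\epsilon)\sigma \le \lambda_{\min}((\bar M^k)^T\bar M^k) \le (1+\epsilon)\sigma$.

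For the second chain I would reuse the same $K$ (shrinking it if convenient, or noting $\epsilon < 1$ keeps $\lambda_{\min}((\bar M^k)^T\bar M^k) > 0$). From $(1-\epsilon)\sigma \le \lambda_{\min}((\bar M^k)^T\bar M^k)$ we get, since $1+\epsilon > 1 \ge 1-\epsilon$ is not quite what is needed — rather, from $\lambda_{\min}((\bar M^k)^T\bar M^k) \le (1+\epsilon)\sigma$ we immediately obtain $\sigma \ge \tfrac{1}{1+\epsilon}\lambda_{\min}((\bar M^k)^T\bar M^k) \ge (1-\epsilon)\lambda_{\min}((\bar M^k)^T\bar M^k)$, using $(1-\epsilon)(1+\epsilon) = 1 - \epsilon^2 \le 1$; and from $\lambda_{\min}((\bar M^k)^T\bar M^k) \ge (1-\epsilon)\sigma$ we obtain $\sigma \le \tfrac{1}{1-\epsilon}\lambda_{\min}((\bar M^k)^T\bar M^k) \le (1+\epsilon)\lambda_{\min}((\bar M^k)^T\bar M^k)$, again since $(1-\epsilon)(1+\epsilon)\le 1$. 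This gives the second chain.

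I do not expect a genuine obstacle here; the statement is essentially "a continuous function of a convergent sequence converges." The only point requiring a little care is the algebraic bookkeeping in passing between the two chains of inequalities — making sure the same constant $\epsilon$ (not $\epsilon/(1\pm\epsilon)$) can be used on both sides — which is handled cleanly by the elementary bound $1-\epsilon \le \tfrac{1}{1+\epsilon}$ and $\tfrac{1}{1-\epsilon} \le 1+\epsilon$ for $\epsilon \in (0,1)$. One could alternatively state and prove only the first chain and derive the second as a purely algebraic corollary, which is the cleaner write-up.
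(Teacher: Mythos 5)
Your route to the first chain is genuinely different from the paper's and is perfectly sound: you invoke continuity of $A \mapsto \lambda_{\min}(A^T A)$ abstractly, whereas the paper proves the needed continuity by hand via the variational characterization $\lambda_{\min}(A^TA)=\min_{\|v\|=1}\|Av\|^2$ and the triangle inequality, obtaining the explicit bound $\bigl|\sqrt{\lambda_{\min}((\bar M^k)^T\bar M^k)}-\sqrt{\sigma}\bigr|\le\|\delta(\bar M^k,M)\|$. Your version is shorter; the paper's gives a quantitative link between the estimation error and $\epsilon$, though nothing downstream uses it. Either way, $\lambda_{\min}((\bar M^k)^T\bar M^k)\to\sigma$ almost surely and the first chain follows.

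There is, however, a concrete error in your derivation of the second chain: you assert $\tfrac{1}{1-\epsilon}\le 1+\epsilon$ for $\epsilon\in(0,1)$, but this is the reverse of the truth, since $(1-\epsilon)(1+\epsilon)=1-\epsilon^2<1$ gives $\tfrac{1}{1-\epsilon}>1+\epsilon$. So while $\sigma\ge(1-\epsilon)\lambda_{\min}((\bar M^k)^T\bar M^k)$ does follow algebraically from the first chain, the bound $\sigma\le(1+\epsilon)\lambda_{\min}((\bar M^k)^T\bar M^k)$ does not: taking $\lambda_{\min}((\bar M^k)^T\bar M^k)=(1-\epsilon)\sigma$, which is consistent with the first chain, gives $(1+\epsilon)\lambda_{\min}=(1-\epsilon^2)\sigma<\sigma$. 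The second chain is therefore \emph{not} a purely algebraic corollary of the first with the same $\epsilon$. The fix is easy and is exactly what the paper does: apply the first chain with a smaller auxiliary tolerance $\epsilon'$ chosen so that $\tfrac{1}{1-\epsilon'}\le 1+\epsilon$ (e.g.\ $\epsilon'=\epsilon/(1+\epsilon)$), enlarging $K$ accordingly. Within your framework the cleanest repair is to note that $\sigma/\lambda_{\min}((\bar M^k)^T\bar M^k)\to 1$ since the denominator converges to $\sigma>0$, and then read off the second chain directly from the definition of this limit.
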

% The proof is deferred to \cref{sec:Appendix_A}.

From here onwards, we will assume that $\{\theta_t\}_{t\ge0}$ is chosen according to the sampling regime that is defined in \Cref{definition:2}.
%\begin{defin}[Sampling Regime]\label{definition:2}
%Let $\epsilon > 0$ be an arbitrary positive constant. If \Cref{assum:4} holds, we choose $\theta_t = t^{1+\epsilon}$. Otherwise, we choose $\theta_t = t^{2+\epsilon}$.
%\end{defin}
%\begin{remark}
%The sampling regime can be changed by a constant nonnegative factor, that is, we may choose $\theta_t = a t^{1+\epsilon}$ or $\theta_t = a t^{2+\epsilon}$ depending on \Cref{assum:4}, where $a> 0$. 
%For simplicity, we assume that $a = 1$. 
%\end{remark}

%todo summary maybe delete
%To summarize the results in this section, we conclude that given $\theta_t$ samples gathered until the $t$-th iteration, that is, at round $t+1$ we sample $\theta_{t+1}-\theta_t$ times, 
%\begin{equation*}
%    \mathbb{P}\left( \|\delta_t\| > O\left(\dfrac{1}{t^{0.5 + 0.25\epsilon}}\right) \; i.o \right) = 0,
%\end{equation*}
%and therefore
%\begin{equation*}
%    \mathbb{P}\left( \sum_{t=1}^\infty \|\delta_t\|^2 < \infty \right) = 1.
%\end{equation*}
%Furthermore, for every $\epsilon > 0$, there exists almost surely an iteration $K>0$, such that for any $k>K$ it holds that
%\begin{equation*}
%    (1-\epsilon)\sigma \le \lambda_{\min}((\bar{M^k})^T \bar{M^k}) \le (1 + \epsilon) \sigma
%\end{equation*}
%and 
%\begin{equation*}
%    (1-\epsilon)\lambda_{\min}((\bar{M^k})^T \bar{M^k}) \le \sigma \le (1 + \epsilon) \lambda_{\min}((\bar{M^k})^T \bar{M^k}).
%\end{equation*}

\section{ISAD for Problems with Bounded Hessian} \label{sec: A refined method to bounded hessian problems}

The meta-algorithm we introduced in \Cref{alg:1} contains two components that can be chosen by the implementer -- an APO with the requirements in \cref{adaptive_penalty_oracle_requirements_defin} and a convex function $\phi$.
In this section we show that for functions with a bounded hessian (cf. \Cref{bounded_hessian_assumption}) we can use a simple pair of APO and function $\phi$. 
The APO and our choice of $\phi$ are described by \cref{alg_beta_oracle_simplified} and \Cref{eq:phi_defin_bounded_hessian}. 

We emphasize that the key difference between this case and the general case described in \cref{sec: Algorithm Implementation For the General Case} is not merely the \textit{existence} of a bound, but that we assume we know the bound's value $\gamma$.
% Accordingly, in this section we make the following additional assumption.
\begin{assumption}\label{bounded_hessian_assumption}
For every $x \in \real^n$, it holds that $-\gamma I \preceq \nabla^2 h(x) \preceq \gamma I. $
% \begin{equation*}
%     -\gamma I \preceq \nabla^2 h(x) \preceq \gamma I.
% \end{equation*}
\end{assumption}
It is well-known that \cref{bounded_hessian_assumption}  is equivalent to $\gamma$-smoothness, or, in other words, to assuming that $\nabla h(x)$ is $\gamma$-Lipschitz, when $h$ is twice continuously differentiable. 
Accordingly, \cref{bounded_hessian_assumption} is a fundamental assumption in both convex and nonconvex optimization, and is satisfied by a broad class of objective functions, e.g., quadratic functions or logistic regression.% (see e.g., \citet[Chapter 7]{boyd2004convex}).

We also assume that the meta-algorithm (cf. \Cref{alg:1}) is implemented with the APO given in \Cref{alg_beta_oracle_simplified}, and with the convex function
\begin{equation}\label{eq:phi_defin_bounded_hessian}
    \phi(x) = \dfrac{\gamma}{2}\|x\|^2 - h(x).
\end{equation}
%and with the APO given in \Cref{alg_beta_oracle_simplified}.

%\begin{remark}
%Our choice for $\phi$, given in \eqref{eq:phi_defin_bounded_hessian}, is convex, because 
%%    \begin{equation*}
%%        0 \preceq \nabla^2 \phi(x) = \gamma I - \nabla^2 h(x) \preceq 2\gamma I.
%%    \end{equation*}
%\end{remark}

\Cref{lem:x_update_simplification} lists a few technical implications following from \cref{bounded_hessian_assumption}.
These simplify, and clarify, the update procedure of the $x$ variable in the operation $\mathcal{A}$, and transforms it from finding a critical solution to solving an optimization problem.
Since \Cref{lem:x_update_simplification} follows from simple technical arguments, we omit its proof.

% The following lemma discusses the properties of the $x$ update given that \cref{bounded_hessian_assumption} holds true and $\phi$ is chosen as in \eqref{eq:phi_defin_bounded_hessian}. The lemma does not affect the remainder of the analysis. Rather, it provides motivation for the analysis of the simplified case, where \cref{bounded_hessian_assumption} holds true.
%\nh{Question: now that we dont repeat the AL update steps, do we really care about the fact that it is now argmin instead of Crit?}
%\dg{Do you mean, now that we use the $\mathcal{A}$ notation for the $x,y,z$ updates? If that is your intention, then yes, the fact that it is $\argmin(\cdot)$ instead of $\crit$ matters in my opinion. The reason is that $\mathcal{A}$ is implicitly a meta algorithm itself, because we don't provide details on the $\argmin$ and $\crit$ implementations. Here, however, we provide additional information for the $x$ updates and their efficiency. }
%

\begin{lem}\label{lem:x_update_simplification}
    Suppose that \cref{bounded_hessian_assumption} holds true, and that $\phi$ is chosen as in \eqref{eq:phi_defin_bounded_hessian}. Then,
    \begin{enumerate}
        \item $g^{t+1}$ is $\dfrac{\gamma}{2}$--strongly convex and $\dfrac{\gamma + \beta_t \lambda_{max}((\bar{M}^{t+1})^T \bar{M}^{t+1})}{2}$ smooth.
        
        \item The $x$ update is equivalent to $x_{t+1} = \argmin\limits_x g^{t+1}(x). $
%        \begin{equation*}
%            x_{t+1} = \argmin\limits_x g^{t+1}(x).
%        \end{equation*}

        \item The closed-form solution for the $x$ update is given by
        \begin{equation*}
            x_{t+1} = \left(\beta_{t+1}\left(\bar{M}^{t+1}\right)^T \bar{M}^{t+1} + \gamma I\right)^{-1} \left(\left(\bar{M}^t\right)^T z_t + \beta_{t+1} \left(\bar{M}^{t+1}\right)^T y_{t+1} + \gamma x_t - \nabla h \left(x_t\right)\right).
        \end{equation*}
    \end{enumerate}
\end{lem}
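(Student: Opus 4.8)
The plan is to leverage the fact that the choice $\phi(x)=\frac{\gamma}{2}\|x\|^2-h(x)$ makes the smooth part $h$ disappear from $g^{t+1}$, turning it into a pure convex quadratic whose minimizer is available in closed form. First I would expand the Bregman term via $D_\phi(x,x_t)=\phi(x)-\phi(x_t)-\langle\nabla\phi(x_t),x-x_t\rangle$ with $\nabla\phi(x_t)=\gamma x_t-\nabla h(x_t)$, and substitute into the definition \eqref{g_function_defin} of $g^{t+1}$. The key observation is that the $h(x)$ appearing explicitly in $g^{t+1}$ cancels against the $-h(x)$ hidden inside $\phi(x)$, so that, up to an additive constant $c_t$ not depending on $x$,
\begin{equation*}
g^{t+1}(x)=\frac{\beta_t}{2}\|\bar{M}^{t+1}x-y_{t+1}\|^2+\frac{\gamma}{2}\|x\|^2-\big\langle(\bar{M}^t)^Tz_t+\gamma x_t-\nabla h(x_t),\,x\big\rangle+c_t .
\end{equation*}
Hence $g^{t+1}$ is quadratic with constant Hessian $\nabla^2 g^{t+1}(x)=\gamma I+\beta_t(\bar{M}^{t+1})^T\bar{M}^{t+1}$; since $(\bar{M}^{t+1})^T\bar{M}^{t+1}\succeq 0$ with largest eigenvalue $\lambda_{max}((\bar{M}^{t+1})^T\bar{M}^{t+1})$, we get $\gamma I\preceq\nabla^2 g^{t+1}(x)\preceq\big(\gamma+\beta_t\lambda_{max}((\bar{M}^{t+1})^T\bar{M}^{t+1})\big)I$ for every $x$, which yields the strong-convexity and smoothness moduli of Part~1.

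Part~2 is then immediate from convexity: $g^{t+1}$ being strongly convex, it has a unique global minimizer, and being differentiable its subdifferential at any $x$ is the singleton $\{\nabla g^{t+1}(x)\}$, so $0\in\partial g^{t+1}(x)$ (i.e. $x\in\crit_x g^{t+1}$) holds precisely when $x$ is that minimizer; thus the $x$-update of \Cref{alg:1} rewrites as $x_{t+1}=\argmin_x g^{t+1}(x)$ and is uniquely determined. For Part~3 I would differentiate the displayed form, $\nabla g^{t+1}(x)=\beta_t(\bar{M}^{t+1})^T(\bar{M}^{t+1}x-y_{t+1})+\gamma x-(\bar{M}^t)^Tz_t-\gamma x_t+\nabla h(x_t)$, set it to zero, collect the $x$-terms, and invert $\gamma I+\beta_t(\bar{M}^{t+1})^T\bar{M}^{t+1}$ — positive definite, hence invertible, by Part~1 — to reach the stated closed-form expression for $x_{t+1}$.

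I expect no genuine obstacle here: once the cancellation of $h$ in $g^{t+1}$ is noticed, the statement reduces to elementary linear algebra and basic convex calculus. The only point that needs a sentence rather than a computation is the identification in Part~2 of $\crit_x g^{t+1}$ with $\argmin_x g^{t+1}$, which relies on Part~1 together with the standard fact that for a differentiable convex function the limiting subdifferential equals the gradient and vanishes exactly at global minimizers.
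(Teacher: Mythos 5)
Your proposal is correct and follows essentially the same route as the paper's proof: expand $D_\phi(x,x_t)$, observe that $h(x)$ cancels, recognize $g^{t+1}$ as a convex quadratic with Hessian $\gamma I + \beta_t(\bar{M}^{t+1})^T\bar{M}^{t+1}$, and read off the strong convexity/smoothness constants, the equivalence of criticality and minimization, and the closed-form minimizer $x^* = -A^{-1}b$. No substantive difference from the paper's argument.
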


The results of \cref{lem:x_update_simplification} provide two practical possibilities to calculate the $x$ updates. 
The first is to use   gradient-based algorithms, which converge exponentially fast to the critical point, since $g^{t+1}$ is both strongly convex and smooth (see \cite[Theorem 3.10]{bubeck2015convex}). 
The second is to calculate the closed form solution directly, at the cost of inverting $\beta_{t+1}\left(\bar{M}^{t+1}\right)^T \bar{M}^{t+1} + \gamma I$.

Hence, we assume in this section that the $x$ update in \Cref{alg:ALAS} invoked by the meta-algorithm (cf. \Cref{alg:1}) is transformed to
\begin{equation}
\label{eq:1545}
	x_{t+1}\in \argmin\limits_x \{- \langle z_t, \bar{M}^t x \rangle + \dfrac{\beta_t}{2}\| \bar{M}^{t+1}x - y_{t+1}\|^2 + \dfrac{\gamma}{2} \|x-x_t\|^2 + \langle \nabla h(x_t), x - x_t \rangle\}.
\end{equation}

\Cref{alg_beta_oracle_simplified} implements an APO designed to exploit the bounded Hessian structure.
%The APO utilized in \Cref{alg_bounded_hessian} 
It is denoted by $\mathrm{APO}_{\mathrm{B}}$ to mark that it is designated for functions with a \textit{bounded} hessian.

%\noindent	\begin{algorithm}[H]
%	\caption{ISAD - Bounded Hessian}\label{alg_bounded_hessian}
%	\KwIn{$\bx_0\in\real^n$, $\bz_0\in\real^n$, $\beta_0>0, \{\theta_t\}_{t>0}$, $\epsilon > 0$} 
%	
%	\For{$t=0,1,2, \ldots$}{
%		$sample \; (\theta_{t+1}-\theta_t) \; matrices \; \{M^i\}_{i=\theta_t + 1}^{\theta_{t+1}}$\;
%		$\bar{M}^{t+1} = \dfrac{\theta_t}{\theta_{t+1}}\bar{M}^{t} + \dfrac{1}{\theta_{t+1}} \sum\limits_{i=\theta_t + 1}^{\theta_{t+1}} M^i\;$
%		
%		set $(y_{t+1},x_{t+1},z_{t+1})\leftarrow \mathcal{A}(y_{t},x_{t},z_{t}, \bar{M}^{t+1}, \beta_{t}, \phi )$\; %todo stop
%		
%%		$y_{t+1}\in \argmin\limits_y \{P(y) + \langle {z_t}, y \rangle +   \dfrac{\beta_t}{2} \|{\bar{M}^{t+1} x_t- y} \|^2 \}$ \;
%%		
%%		$x_{t+1}\in \argmin\limits_x \{- \langle z_t, \bar{M}^t x \rangle + \dfrac{\beta_t}{2}\| \bar{M}^{t+1}x - y_{t+1}\|^2 + \dfrac{\gamma}{2} \|x-x_t\|^2 + \langle \nabla h(x_t), x - x_t \rangle\}$\;
%%
%%		$z_{t+1} = z_t - \beta_t (\bar{M}^{t+1} x_{t+1}-y_{t+1})$\;
%		
%		$\beta_{t+1} \leftarrow \mathrm{APO}_{\mathrm{B}} (\beta_t, \bar{M}^{t+1}, \gamma, \epsilon) $ \; %Bounded \; Hessian \; \mathrm{APO}_{\mathrm{B}}(\beta_t, \bar{M}^{t+1}, \gamma, \epsilon)$
%	}
%\end{algorithm}
%
%\medskip

%\begin{remark}
%	Note
%	$x_{t+1}\in \argmin\limits_x \{- \langle z_t, \bar{M}^t x \rangle + \dfrac{\beta_t}{2}\| \bar{M}^{t+1}x - y_{t+1}\|^2 + \dfrac{\gamma}{2} \|x-x_t\|^2 + \langle \nabla h(x_t), x - x_t \rangle\}$
%\end{remark}

\normalem
\noindent	\begin{algorithm}[H]
\caption{Bounded Hessian APO: $\mathrm{APO}_{\mathrm{B}}$ }\label{alg_beta_oracle_simplified}
	\KwIn{$\beta\in\real$, $M \in\real^{n \times n}$, $\gamma>0$, $\epsilon > 0$.} 
	
	$\tilde{\sigma} = \lambda_{\min}(M^T M)$ \;
%    \uIf{$\tilde{\sigma} == 0$}{
%        return $\beta$ 
%    }
	\uIf{$\tilde{\sigma} = 0 \ $ or $\ \dfrac{(1 + 0.5\epsilon) \cdot 40 \gamma^2}{\tilde{\sigma}\beta} < \tilde{\sigma} \beta + \gamma < \dfrac{(1 + 2\epsilon) \cdot 40 \gamma^2}{\tilde{\sigma}\beta}$}
		{return $\beta$}
		\Else {$\beta = \dfrac{-\gamma + \sqrt{\gamma^2 + (1 + \epsilon)160 \gamma^2}}{2 \tilde{\sigma}}$ \; %\tcp{Positive solution to $\tilde{\sigma} \beta + \gamma = \dfrac{(1 + \epsilon) \cdot 24 \gamma}{\tilde{\sigma}\beta}$}
		return $\beta$}  
\end{algorithm}
\ULforem
\medskip

To establish that \cref{alg_beta_oracle_simplified} satisfies the APO requirements outlined in \Cref{adaptive_penalty_oracle_requirements_defin}, we first prove the next technical lemma. %, and therefore, that \cref{alg_bounded_hessian} implements the meta algorithm \cref{alg:1}.
\begin{lem}\label{bounds_for_empirical_lipschitz_constants}
Suppose that \cref{bounded_hessian_assumption} holds true, and let $\{x_t, y_t, z_t\}_{t \ge 0}$ be the sequence generated by \Cref{alg:1} using $\mathrm{APO}_{\mathrm{B}}$. %\cref{alg_bounded_hessian}. 
Then, 
\begin{equation*}
    L^e_{\nabla \phi}(\{x_t\}_{t\ge0}) \le 2\gamma \text{ and } L^e_{\nabla h + \nabla \phi}(\{x_t\}_{t\ge0}) \le \gamma,
\end{equation*}
where $L^e_{\nabla \phi}(\{x_t\}_{t\ge0})$ and $L^e_{\nabla h + \nabla \phi}(\{x_t\}_{t\ge0})$ are the empirical Lipschitz constants of $\nabla \phi$ and $\nabla h + \nabla \phi$ with respect to the sequence $\{x_t\}_{t\ge0}$.
\begin{proof} 
By the fact that $ 0 \preceq \nabla^2 \phi(x) = \gamma I - \nabla^2 h(x) \preceq 2\gamma I,
 $
%\begin{equation*}
%    0 \preceq \nabla^2 \phi(x) = \gamma I - \nabla^2 h(x) \preceq 2\gamma I,
%\end{equation*}
the Lipschitzs constant of $\nabla \phi$, denoted by $L_{\nabla \phi}$, satisfies that $L_{\nabla \phi} \le 2 \gamma$. 
Since $h(x) + \phi(x) = \dfrac{\gamma}{2}\|x\|^2,$ it follows that $\nabla^2 (h + \phi)(x) = \gamma I,$ and therefore $L_{\nabla h + \nabla \phi} \le \gamma.$
%\begin{equation*}
%    h(x) + \phi(x) = \dfrac{\gamma}{2}\|x\|^2,
%\end{equation*}
% it follows that
% \begin{equation*}
%    \nabla^2 (h + \phi)(x) = \gamma I,
% \end{equation*}
% and therefore 
% \begin{equation*}
%     L_{\nabla h + \nabla \phi} \le \gamma.
% \end{equation*}
Thus, invoking
\Cref{lemma_empirical_lipschitz_bounded_by_lipschitz}, we conclude that $ L^e_{\nabla \phi}(\{x_t\}_{t\ge0}) \le 2\gamma \text{ and } L^e_{\nabla h + \nabla \phi}(\{x_t\}_{t\ge0}) \le \gamma.$
%\begin{equation*}
%    L^e_{\nabla \phi}(\{x_t\}_{t\ge0}) \le 2\gamma \text{ and } L^e_{\nabla h + \nabla \phi}(\{x_t\}_{t\ge0}) \le \gamma.
%\end{equation*}
%\Halmos 
\end{proof}
\end{lem}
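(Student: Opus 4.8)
The plan is to avoid engaging with the sequence $\{x_t\}_{t\ge0}$ at all, and instead bound the two \emph{global} Lipschitz constants $L_{\nabla\phi}$ and $L_{\nabla h+\nabla\phi}$, then invoke \Cref{lemma_empirical_lipschitz_bounded_by_lipschitz} to transfer these bounds to the empirical Lipschitz constants. Since \Cref{lemma_empirical_lipschitz_bounded_by_lipschitz} guarantees $L^e_F(\{x_t\}_{t\ge0}) \le L_F$ for \emph{any} function $F$ and \emph{any} sequence, it suffices to show $L_{\nabla\phi}\le 2\gamma$ and $L_{\nabla h+\nabla\phi}\le\gamma$.

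First I would bound $L_{\nabla\phi}$. Differentiating $\phi(x)=\tfrac{\gamma}{2}\|x\|^2 - h(x)$ twice gives $\nabla^2\phi(x) = \gamma I - \nabla^2 h(x)$. \Cref{bounded_hessian_assumption}, which states $-\gamma I \preceq \nabla^2 h(x)\preceq \gamma I$, then yields $0 \preceq \nabla^2\phi(x)\preceq 2\gamma I$ for every $x$, so $\|\nabla^2\phi(x)\|_{2,2}\le 2\gamma$ uniformly. A standard mean-value / fundamental-theorem-of-calculus argument along line segments then gives $\|\nabla\phi(x)-\nabla\phi(y)\|\le 2\gamma\|x-y\|$ for all $x,y$, i.e. $L_{\nabla\phi}\le 2\gamma$.

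Next, for $L_{\nabla h+\nabla\phi}$, I would exploit the exact cancellation built into the choice \eqref{eq:phi_defin_bounded_hessian}: $h(x)+\phi(x) = \tfrac{\gamma}{2}\|x\|^2$, hence $\nabla h(x)+\nabla\phi(x) = \gamma x$, which is a linear map with Jacobian $\gamma I$. Thus $\nabla h+\nabla\phi$ is (exactly) $\gamma$-Lipschitz, giving $L_{\nabla h+\nabla\phi}\le\gamma$. Combining the two estimates with \Cref{lemma_empirical_lipschitz_bounded_by_lipschitz} finishes the proof.

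In terms of difficulty there is essentially no obstacle: the argument is a two-line Hessian computation plus an appeal to two already-established facts (the Hessian bound in \Cref{bounded_hessian_assumption} and the inequality in \Cref{lemma_empirical_lipschitz_bounded_by_lipschitz}). The only point requiring a sliver of care is the implication ``spectral bound on the Hessian $\Rightarrow$ Lipschitz bound on the gradient,'' which is completely standard for $C^2$ functions on $\real^n$; notably, nothing about the stochasticity of the matrices or the specific iterates $\{x_t\}$ enters the argument at all.
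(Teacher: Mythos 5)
Your proposal is correct and follows essentially the same route as the paper: bound the global Lipschitz constants via the Hessian identities $\nabla^2\phi = \gamma I - \nabla^2 h$ and $\nabla^2(h+\phi) = \gamma I$, then transfer to the empirical constants via \Cref{lemma_empirical_lipschitz_bounded_by_lipschitz}. The only difference is that you make explicit the standard "Hessian bound implies gradient Lipschitz bound" step, which the paper leaves implicit.
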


%Using the previous lemma, we now establish that \Cref{alg_beta_oracle_simplified} indeed fulfills the APO requirements outlined in \Cref{adaptive_penalty_oracle_requirements_defin}.

%{\color{red} [Did you mean \Cref{simplified_beta_oracle_fulfills_requirements_lemma}? I don't want to write lemmas that are used only once. It will also cost us more because we will need to formulate and redefine, and it will confuse the readers. Maybe only if we move the lemmas to the appendix - No Change (NC) for now, defer to later]}
%{\color{blue}[What I had in mind is moving a large part of the lemmas to the appendix, yes.]}{\color{green}[NC] As with \cref{general_beta_oracle_fulfills_requirements}, I think we can break it apart to lemmas:
%\begin{enumerate}
%    \item The number of $\beta$ updates is finite.
%    \item There exists an index after which the functions $g^{t+1}(x)$ are always strongly convex with a sufficiently large constant to satisfy the required descent property.
%\end{enumerate}
%The first lemma is very technical in my opinion -- its a bunch of simple bounds put together. The second is last technical, but we can consider it for the appendix regardless.
%}
\begin{thm}\label{simplified_beta_oracle_fulfills_requirements_lemma}
Suppose that \Cref{bounded_hessian_assumption} holds. Then, \Cref{alg_beta_oracle_simplified} fulfills the APO requirements outlined in \Cref{adaptive_penalty_oracle_requirements_defin}. 
\end{thm}

\section{Algorithm Implementation For the General Case} \label{sec: Algorithm Implementation For the General Case}
%\Cref{sec: A refined method to bounded hessian problems} provides an explicit algorithmic infrastructure in which the meta-algorithm sub-procedure's requirements are satisfied, under the standard assumption that the hessian of the function $h$ is bounded.
In this section, we introduce a more universal implementation of the meta-algorithm, %described in \Cref{alg_general_case}, 
and its sub-procedures, that do not require  the boundedness of the hessian.
This generality comes at the price of the technical complexity of the schematics of the APO, given  in this section by \Cref{general_beta_oracle}, and the subsequent related analysis.

\normalem
\noindent	\begin{algorithm}[H]
	\caption{General APO}\label{general_beta_oracle}
	\KwIn{$\bx_{t+1}\in\real^n$, $\bx_{t}\in\real^n$, $\beta>0, \zeta_t, \xi_t$, $\epsilon > 0$ $\bar{M}^{t+1}$, $\phi$, $h$, where $\bar{M}^{t+1}$, $\phi$ and $h$ satisfy required assumptions.} 

        \uIf{$x_{t+1} = x_t$}{
            return $\beta, \zeta_t, \xi_t$
        }
            
        $\zeta_{t+1} = \max \left\{\zeta_t, \dfrac{\|\nabla h(x_{t+1}) + \nabla \phi(x_{t+1}) - \nabla h(x_t) - \nabla \phi(x_t)\|^2}{\|x_{t+1}-x_t\|^2}\right\}$
		
        $\xi_{t+1} = \max \left\{ \xi_t, \dfrac{\|\nabla \phi(x_{t+1}) - \nabla \phi(x_t)\|^2}{\|x_{t+1}-x_t\|^2} \right\}$

        Set $\tilde{\sigma} = \lambda_{\min}((\bar{M}^{t+1})^T \bar{M}^{t+1})$\;

        \uIf{$\tilde{\sigma} = 0$}{
            return $\beta, \zeta_{t+1}, \xi_{t+1}$
        }
  
		 Set $\rho_t = -\dfrac{2\left(g(x_{t+1}) - g(x_t)\right)}{\|x_{t+1}-x_t\|^2}$\;
		
		\uIf{$\dfrac{1}{4} \rho_t > \dfrac{8(\zeta_{t+1} + \xi_{t+1} + \epsilon)}{\beta  \tilde{\sigma}}$}
		{return $\beta, \zeta_{t+1}, \xi_{t+1}$}
		\Else {return $2 \beta, \zeta_{t+1}, \xi_{t+1}$}
	
\end{algorithm}
\ULforem

\begin{remark}[a technical note on the $\beta$ update]
\label{rem:1902}
Note that $\beta$ is updated at iteration $t\geq 0$ only if $\tilde{\sigma} \neq 0$ and $ \rho_t \beta  \tilde{\sigma} \leq 32(\zeta_{t+1} + \xi_{t+1} + \epsilon).$
%\begin{equation*}
%    \tilde{\sigma} \neq 0 \text{ and } \dfrac{1}{4} \rho_t \leq \dfrac{8(\zeta_{t+1} + \xi_{t+1} + \epsilon)}{\beta  \tilde{\sigma}},
%\end{equation*}
Additionally, if $\beta$ was updated $\kappa$ times, then $\beta = 2^\kappa \beta_0$ where $\beta_0$ is the initial value of $\beta$.
\end{remark}

In \cref{general_beta_oracle_fulfills_requirements} we establish that 
%\Cref{alg_general_case}, and its adaptive penalty oracle procedure 
the APO in \Cref{general_beta_oracle} satisfies the requirements outlined in \cref{adaptive_penalty_oracle_requirements_defin} when it is invoked by the meta-algorithm. 
%The proof builds on the boundedness of the generated sequence (cf. \Cref{assum:2}) to guarantee that all the continuous elements, mainly the derivatives of $h$ and $\phi$, are bounded as-well, which implies that the corresponding empirical Lipschitz constants are bounded.
%From this point, the analysis takes a similar turn, with the necessary adjustments, to that taken in the proof of \Cref{simplified_beta_oracle_fulfills_requirements_lemma} in the previous section -- showing that $\beta$ is updated finitely many times.
%We move onward to formally state and prove \cref{general_beta_oracle_fulfills_requirements}.

%{\color{green} [NC] On one hand, this is a very important proof, and it showcases the use of the empirical Lipschitz constant, which we definitely care about. On the other hand, its a very long proof, and quite cumbersome. I suggest we break it apart to 3 lemmas:
%\begin{enumerate}
%    \item There exists a constant $C$, such that 
%    \begin{equation*}
%        \dfrac{8(\zeta_{t+1} + \xi_{t+1} + \epsilon)}{\beta_t  \tilde{\sigma}_t} < C, \qquad \forall t>J .
%    \end{equation*}
%
%    \item There exists a constant $\kappa_{\beta_k}$, such that $\beta$ is updated at most $\kappa_{\beta_k}$ times starting from index $k$.
%
%    \item For a sufficiently small $\epsilon''>0$, $g^{t+1}(x)$ fulfills the required descent property with the constant $(1-\epsilon'') \liminf\limits_{k\rightarrow \infty} \rho_k$.
%\end{enumerate}
%Lemma 1 can definitely move to the appendix in my opinion, and I think that the argument can be made for lemmas 2 and 3 as well.
%}
\begin{thm}\label{general_beta_oracle_fulfills_requirements}
Let $\{x_t, y_t, z_t\}_{t\ge0}$ be the sequence generated by \cref{alg:1} with the APO \Cref{general_beta_oracle}, and let $\{\beta_t\}_{t\ge0}$ be the sequence of adaptive penalties generated. Then $\{x_t, y_t, z_t, \beta_t\}_{t\ge0}$ satisfies \cref{adaptive_penalty_oracle_requirements_defin}. 
\end{thm}

\section{Meta Algorithm Convergence} \label{sec: Meta Algorithm Convergence}
In this section, we prove the convergence of the meta algorithm \Cref{alg:1}. 
The proof is split into two phases. 
First, in \Cref{thm:2} we prove that if the size of the algorithm steps converges to zero, then every accumulation point of the algorithm is a critical point.
Then, in \Cref{thm:3} we prove that the size of the algorithm steps does indeed converge to zero.

\Cref{lem:4} describes implications of the AL update scheme which will be used in analysis.
\begin{lem}\label{lem:4}
Let $\{x_t, y_t, z_t\}_{t > 0}$ be the sequence generated by \Cref{alg:1}. 
Then for any $t\geq 0$,
\begin{enumerate}
    \item $0 \in \partial P(y_{t+1}) + z_{t+1} + \beta_t \bar{M}^{t+1} (x_{t+1} - x_t)$
    
    \item $\nabla h(x_{t+1}) - (\bar{M}^{t+1})^T z_{t+1} = - \nabla \phi (x_{t+1}) + \nabla \phi (x_t)$
    
    \item $\bar{M}^{t+1} x_{t+1} - y_{t+1} = \dfrac{1}{\beta_t} (z_t - z_{t+1})$
\end{enumerate}
\end{lem}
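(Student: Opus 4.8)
The plan is to read off all three identities from the first-order optimality conditions of the three update steps in \Cref{alg:1}, together with the definition of the dual update $z_{t+1} = z_t - \beta_t(\bar{M}^{t+1}x_{t+1} - y_{t+1})$.

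First I would treat part 3, which is purely algebraic: rearranging $z_{t+1} = z_t - \beta_t(\bar{M}^{t+1}x_{t+1}-y_{t+1})$ gives $\beta_t(\bar{M}^{t+1}x_{t+1}-y_{t+1}) = z_t - z_{t+1}$, and dividing by $\beta_t > 0$ yields $\bar{M}^{t+1}x_{t+1}-y_{t+1} = \tfrac{1}{\beta_t}(z_t - z_{t+1})$.

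Next, for part 1, I would use the $y$-update $y_{t+1}\in\argmin_y\{P(y)+\langle z_t,y\rangle+\tfrac{\beta_t}{2}\|\bar{M}^{t+1}x_t - y\|^2\}$. Fermat's rule plus the sum rule for subdifferentials (the last two terms are smooth) gives $0\in\partial P(y_{t+1}) + z_t - \beta_t(\bar{M}^{t+1}x_t - y_{t+1})$. Now I substitute $z_t$ using the dual update: $z_t = z_{t+1} + \beta_t(\bar{M}^{t+1}x_{t+1}-y_{t+1})$, so
\begin{equation*}
0\in\partial P(y_{t+1}) + z_{t+1} + \beta_t(\bar{M}^{t+1}x_{t+1}-y_{t+1}) - \beta_t(\bar{M}^{t+1}x_t - y_{t+1}) = \partial P(y_{t+1}) + z_{t+1} + \beta_t\bar{M}^{t+1}(x_{t+1}-x_t),
\end{equation*}
which is exactly the claimed identity. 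Finally, for part 2, I would use the $x$-update: $x_{t+1}\in\crit_x\{h(x) - \langle z_t,\bar{M}^t x\rangle + \tfrac{\beta_t}{2}\|\bar{M}^{t+1}x - y_{t+1}\|^2 + D_\phi(x,x_t)\}$. Since all terms are differentiable in $x$ (recall $D_\phi(x,x_t) = \phi(x)-\phi(x_t)-\langle\nabla\phi(x_t),x-x_t\rangle$, whose $x$-gradient is $\nabla\phi(x)-\nabla\phi(x_t)$), the criticality condition reads
\begin{equation*}
0 = \nabla h(x_{t+1}) - (\bar{M}^t)^T z_t + \beta_t(\bar{M}^{t+1})^T(\bar{M}^{t+1}x_{t+1}-y_{t+1}) + \nabla\phi(x_{t+1}) - \nabla\phi(x_t).
\end{equation*}
Using part 3, $\beta_t(\bar{M}^{t+1}x_{t+1}-y_{t+1}) = z_t - z_{t+1}$, so $(\bar{M}^{t+1})^T\beta_t(\bar{M}^{t+1}x_{t+1}-y_{t+1}) = (\bar{M}^{t+1})^T(z_t - z_{t+1})$, and the gradient condition becomes $0 = \nabla h(x_{t+1}) - (\bar{M}^t)^T z_t + (\bar{M}^{t+1})^T z_t - (\bar{M}^{t+1})^T z_{t+1} + \nabla\phi(x_{t+1}) - \nabla\phi(x_t)$.

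Here lies the one genuine subtlety: the term $-(\bar{M}^t)^T z_t + (\bar{M}^{t+1})^T z_t = (\bar{M}^{t+1}-\bar{M}^t)^T z_t$ does not vanish in general, yet the claimed identity has no such term. I expect this is either a benign typo in the statement (the $x$-update in \Cref{alg:1} uses $\bar{M}^t$ while the penalty term uses $\bar{M}^{t+1}$, and the clean identity would follow if both were $\bar{M}^{t+1}$), or the intended reading is that the inner product term in the $x$-update is actually $\langle z_t, \bar{M}^{t+1}x\rangle$. Under that reading the cross terms cancel exactly and we get $0 = \nabla h(x_{t+1}) - (\bar{M}^{t+1})^T z_{t+1} + \nabla\phi(x_{t+1}) - \nabla\phi(x_t)$, i.e. $\nabla h(x_{t+1}) - (\bar{M}^{t+1})^T z_{t+1} = -\nabla\phi(x_{t+1}) + \nabla\phi(x_t)$, as stated. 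In writing the proof I would adopt whichever convention the authors actually intend and flag it; the main obstacle is thus bookkeeping of which matrix estimator appears where, not any real mathematical difficulty — all three parts are immediate consequences of Fermat's rule and the dual update.
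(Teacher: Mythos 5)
Your proof is correct and takes the same route as the paper: write the first-order conditions for the $y$- and $x$-updates and substitute the dual update $z_{t+1}=z_t-\beta_t(\bar{M}^{t+1}x_{t+1}-y_{t+1})$. The subtlety you flagged is real, and the paper resolves it exactly as you guessed: its own proof states the $x$-criticality condition as $0=\nabla h(x_{t+1})-(\bar{M}^{t+1})^Tz_t+\beta_t(\bar{M}^{t+1})^T(\bar{M}^{t+1}x_{t+1}-y_{t+1})+\nabla\phi(x_{t+1})-\nabla\phi(x_t)$, i.e.\ with $(\bar{M}^{t+1})^Tz_t$ rather than $(\bar{M}^{t})^Tz_t$, so the $\bar{M}^t$ appearing in the inner-product term of the displayed $x$-update is a typo and no residual $(\bar{M}^{t+1}-\bar{M}^t)^Tz_t$ term arises.
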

\begin{proof} 
The first-order conditions of the update steps of $y_{t+1}$ and $x_{t+1}$ readily imply that
\begin{align}\label{eq:31}
	0 &\in \partial P(y_{t+1}) + z_t - \beta_t(\bar{M}^{t+1}x_t - y_{t+1}),\\
	\label{eq:32}
    0 &= \nabla h(x_{t+1}) - (\bar{M}^{t+1})^T z_t + \beta_t (\bar{M}^{t+1})^T (\bar{M}^{t+1} x_{t+1} - y_{t+1}) + \nabla \phi (x_{t+1}) - \nabla \phi (x_t).
\end{align}

The first part follows from applying the update rule of $z_{t+1}$ to \eqref{eq:31}, and the second part follows from applying the update rule of $z_{t+1}$ to \eqref{eq:32}. 
The third part follows directly from the update rule of $z_{t+1}$.
%\Halmos 
\end{proof}

In the purpose of establishing the convergence guarantee for \Cref{alg:1} stated by \Cref{thm:3}, we prove a cornerstone result that will be plugged in the analysis.
Since it plays its role inside the proof, it assumes that the distance between consecutive decision variables converges to zero; this presumption is established as a part of the proof of \Cref{thm:3}.

%{\color{green} [NC] Think we can move the nontrivial part of \cref{thm:2}, proving that $\lim_{i\to\infty} P(y_{t_i}) = P(y^*)$, to the appendix.}
\begin{thm}\label{thm:2}
Let $\{x_t, y_t, z_t\}_{t > 0}$ be the sequence generated by \Cref{alg:1}. Assume that $\lim_{t\rightarrow\infty} \|x_{t+1}-x_t\|^2 + \|y_{t+1}-y_t\|^2 + \|z_{t+1}-z_t\|^2 = 0$, that \cref{assum:iid} and \cref{assum:2} hold true, and that $(x^*, y^*, z^*)$ is an accumulation point of the sequence. 
Then the following hold almost-surely: (i) $-z^* \in \partial P(y^*)$; (ii) $\nabla h(x^*) = \mathbb{E}[M]^T z^*$; (iii) $\mathbb{E}[M]x^* = y^*$.
%\begin{enumerate}
%    \item $-z^* \in \partial P(y^*)$,
%    \item $\nabla h(x^*) = \mathbb{E}[M]^T z^*$,
%    \item $\mathbb{E}[M]x^* = y^*$.
%\end{enumerate}
\end{thm}
\begin{proof}[{\cref{thm:2}}]  %todo STOP
By the definition of $\beta$ in \cref{adaptive_penalty_oracle_requirements_defin}, there exists an index $K_{stable}$, such that $\beta_k = \beta_{K_{stable}}$ for all $k > K_{stable}$. 
Let  $\{x_{t_i}, y_{t_i}, z_{t_i}\}_{i\geq 1}$ be a subsequence converging to the accumulation point $(x^*, y^*, z^*)$, and for convenience, set $\bar{\beta} = \beta_{K_{stable}}$ and assume without loss of generality that $t_1 > K_{stable}$, meaning that $\beta_{t_i} = \bar{\beta}$ for any $i\geq 1$.

The first part of the theorem requires a lengthy technical result to prove, and so we start with the second and the third parts.

For the second part, we note that by the second part of \Cref{lem:4}, for the $t_i$ element of the subsequence:
\begin{equation}\label{eq:28}
    \nabla h(x_{t_i}) - (\bar{M}^{t_i})^T z_{t_i} = - \nabla \phi (x_{t_i}) + \nabla \phi (x_{t_i - 1}).
\end{equation}

By the convergence of the subsequence $\{x_{t_i},y_{t_i}, z_{t_i}\}_{i\ge0}$ to $(x^*, y^*, z^*)$ and the assumption that $\lim_{t\rightarrow\infty} \|x_{t+1}-x_t\|^2 + \|y_{t+1}-y_t\|^2 + \|z_{t+1}-z_t\|^2 = 0$, it holds that $x_{t_i - 1} \xrightarrow{i\to\infty}x^*$.
Consequently, utilizing the fact that $\phi$ is continuously differentiable, it follows that $\nabla \phi(x_{t_i}), \nabla \phi(x_{t_i - 1}) \xrightarrow{i\to\infty} \nabla \phi(x^*)$,
and therefore, by taking $i\to\infty$ we  obtain that
\begin{equation}\label{eq:29}
    - \nabla \phi (x_{t_i}) + \nabla \phi (x_{t_i - 1}) \xrightarrow{i\to\infty} 0.
\end{equation}

Additionally, by the strong law of large numbers (see \cite[Theorem 11.3.1]{borovkov2013probability}),
\begin{equation}\label{eq:30}
    \bar{M}^{t_i} \xrightarrow{i\to\infty} \mathbb{E}[M].
\end{equation}

Combining \eqref{eq:28}, \eqref{eq:29}, and \eqref{eq:30}, yields the second part of the lemma, $ \nabla h(x^*) = \mathbb{E}[M]^T z^*. $
%\begin{equation*}
%    \nabla h(x^*) = \mathbb{E}[M]^T z^*.
%\end{equation*}

To see the correctness of the third part of the lemma, we note that by the third part of \Cref{lem:4} and the fact that  $\beta_{t_i}=\bar{\beta}$, for the $t_i$ element of the subsequence we have that
\begin{equation*}
    \bar{M}^{t_i} x_{t_i} - y_{t_i} = \dfrac{1}{\beta_{t_i}} (z_{t_i-1} - z_{t_i})= \dfrac{1}{\bar{\beta}} (z_{t_i-1} - z_{t_i}).
\end{equation*}
%Recall that our subsequence satisfies that $\beta_{t_i}=\bar{\beta}$. Therefore, for all $i\geq 1$, 
%\begin{equation*}
%    \bar{M}^{t_i} x_{t_i} - y_{t_i} = \dfrac{1}{\bar{\beta}} (z_{t_i-1} - z_{t_i}).
%\end{equation*}
By our assumption that $\lim_{t\rightarrow\infty} \|z_{t+1}-z_t\|^2 = 0$, we thus obtain by taking $i\to\infty$ that $\mathbb{E}[M]x^* = y^*,$
%\begin{equation*}
%    \mathbb{E}[M]x^* = y^*,
%\end{equation*}
which concludes the proof of the third part.

To prove the first part of the theorem, we will need to establish that 
\begin{equation}\label{P value convergence}
    \lim\limits_{i\to\infty} P(y_{t_i}) = P(y^*).
\end{equation}
We will a priori  assume that \eqref{P value convergence} holds, utilize it to prove the first part of the theorem, and only then establish that \eqref{P value convergence} indeed holds true.

Note that by the first part of \Cref{lem:4}, for the $t_i$ element of the subsequence,
\begin{equation*}
    0 \in \partial P(y_{t_i}) + z_{t_i} + \beta_{t_i} \bar{M}^{t+1} (x_{t_i} - x_{t_i-1}).
\end{equation*}

Recall that for $t$ such that $t > K_{stable}$, $\beta_{t}=\bar{\beta}$, and that $t_1 > K_{stable}$.
Therefore, for all $i\geq 1$, 

\begin{equation*}
    0 \in \partial P(y_{t_i}) + z_{t_i} + \bar{\beta} \bar{M}^{t+1} (x_{t_i} - x_{t_i-1}).
\end{equation*}

Since $\lim\limits_{i\to\infty} y_{t_i} = y^*$ and $\lim\limits_{i\to\infty} P(y_{t_i}) = P(y^*)$ by our assumption, from \cite[Proposition 8.7]{rockafellar2009variational} we have that
\begin{equation*}
    \limsup\limits_{i\to\infty} \partial P(y_{t_i}) \subset \partial P(y^*).
\end{equation*}

Taking $i\to\infty$, using the fact that $\limsup\limits_{i\to\infty} \partial P(y_{t_i}) \subset \partial P(y^*)$ and the assumption $\lim_{t\rightarrow\infty} \|x_{t+1}-x_t\|^2 = 0$, we conclude that $- z^* \in \partial P(y^*), $
%\begin{equation*}
%    - z^* \in \partial P(y^*),
%\end{equation*}
which establishes the first part of the theorem -- given that \eqref{P value convergence} holds true.

All that remains is to prove the correctness of \eqref{P value convergence}.
To that end, it is sufficient to show that: (i) $\liminf\limits_{i\to\infty} P(y_{t_i}) \ge P(y^*)$, and (ii) $\limsup\limits_{i\to\infty} P(y_{t_i}) \le P(y^*)$, hold true.

The first relation 
\begin{equation}\label{P value liminf}
    \liminf\limits_{i\to\infty} P(y_{t_i}) \ge P(y^*)
\end{equation}
follows immediately from the lower semicontinuity of $P$.

By the convergence of the subsequence $\{x_{t_i}, y_{t_i}, z_{t_i}\}_{i\geq 0}$, and our assumption that $\lim_{t\rightarrow\infty} \|x_{t+1}-x_t\|^2 + \|y_{t+1}-y_t\|^2 + \|z_{t+1}-z_t\|^2 = 0$, it follows that 
\begin{equation*}
    x_{t_i - 1} \xrightarrow{i\to\infty} x^* \text{ and } z_{t_i - 1} \xrightarrow{i\to\infty} z^*.
\end{equation*}

To establish that $\limsup\limits_{i\to\infty} P(y_{t_i}) \le P(y^*)$ holds true,  
first note that since all the   components of $\mathcal{L}_{\bar{\beta}}(x,y,z; M)$ other than $P(y)$ are continuous and $\{(x_{t_i-1}, y_{t_i}, z_{t_i-1}, \bar{M}^{t_i})\} \xrightarrow{i\to\infty} (x^*, y^*, z^*, \mathbb{E}[M])$, it is sufficient to show that 
\begin{equation*}
    \limsup\limits_{i\to\infty}\mathcal{L}_{\bar{\beta}}(x_{t_i - 1}, y_{t_i},z_{t_i - 1}; \bar{M}^{t_i}) \le \mathcal{L}_{\bar{\beta}}(x^*, y^* ,z^*; \mathbb{E}[M]) := \mathcal{L}_{\bar{\beta}}(x^*, y^* ,z^*).
\end{equation*}

To prove the above, we will show that the sequence $\{\mathcal{L}_{\bar{\beta}}(x_{t_i - 1}, y_{t_i},z_{t_i - 1}; \bar{M}^{t_i})\}_{i \ge 0}$ is bounded. 
Then, we will show that the limit of every convergent subsequence is bounded by $\mathcal{L}_{\bar{\beta}}(x^*, y^* ,z^*)$. 

Since $P$ is proper, there exists $\tilde{y}$ such that $P(\tilde{y}) < \infty$.
By \Cref{assum:2}, the sequence $\{x_t, y_t, z_t\}_{t \ge 0}$ is bounded, and as we stated previously, all the components of $\mathcal{L}_{\bar{\beta}}(x,y,z; M)$ except $P(y)$ are continuous. 
Subsequently, it follows that
\begin{equation*}
    \inf\limits_{i\geq 1} \{\mathcal{L}_{\bar{\beta}}(x_{t_i-1}, y_{t_i}, z_{t_i-1}; \bar{M}^{t_i}) - P(y_{t_i})\} \text{ and }  \inf\limits_{i\geq 1} \{\mathcal{L}_{\bar{\beta}}(x_{t_i-1}, \tilde{y}, z_{t_i-1}; \bar{M}^{t_i}) - P(\tilde{y})\},
\end{equation*} 
\begin{equation*}
    \sup\limits_{i\geq 1} \{ \mathcal{L}_{\bar{\beta}}(x_{t_i-1}, y_{t_i}, z_{t_i-1}; \bar{M}^{t_i}) - P(y_{t_i})\} \text{ and } \sup\limits_{i\geq 1} \{ \mathcal{L}_{\bar{\beta}}(x_{t_i-1},\tilde{y}, z_{t_i-1}; \bar{M}^{t_i}) - P(\tilde{y})\},
\end{equation*} 
exist and are finite. 

By the definition of the update of $y$, $y_{t_i}$ is a minimizer of $\mathcal{L}_{\bar{\beta}}(x_{t_i - 1}, y, z_{t_i - 1}; \bar{M}^{t_i})$.
Hence, 
\begin{equation*}
    \mathcal{L}_{\bar{\beta}}(x_{t_i-1},y_{t_i},z_{t_i-1}) \le \mathcal{L}_{\bar{\beta}}(x_{t_i-1},\tilde{y},z_{t_i-1}).
\end{equation*}
Moreover, by the definition of supremum and infimum,
\begin{align*}
    \mathcal{L}_{\bar{\beta}}(x_{t_i-1},y_{t_i},z_{t_i-1}) &= \mathcal{L}_{\bar{\beta}}(x_{t_i-1},y_{t_i},z_{t_i-1}) - P(y_{t_i}) + P(y_{t_i}) \\&\ge P(y_{t_i}) + \inf\limits_{j \ge 1} \{\mathcal{L}_{\bar{\beta}}(x_{t_j-1}, y_{t_j}, z_{t_j-1}; \bar{M}^{t_j}) - P(y_{t_j})\}
\end{align*}
and
\begin{align*}
    \mathcal{L}_{\bar{\beta}}(x_{t_i-1},\tilde{y},z_{t_i-1}) &= \mathcal{L}_{\bar{\beta}}(x_{t_i-1},\tilde{y},z_{t_i-1}) - P(\tilde{y}) + P(\tilde{y}) \\&\le P(\tilde{y}) + \sup\limits_{j \ge 1} \{\mathcal{L}_{\bar{\beta}}(x_{t_j-1}, \tilde{y}, z_{t_j-1}; \bar{M}^{t_j}) - P(\tilde{y}) \}.
\end{align*}
Combining the inequalities above, we get
\begin{equation*}
    P(y_{t_i}) + \inf\limits_{j \ge 1} \{\mathcal{L}_{\bar{\beta}}(x_{t_j-1}, y_{t_j}, z_{t_j-1}; \bar{M}^{t_j}) - P(y_{t_j})\} \le P(\tilde{y}) + \sup\limits_{j \ge 1} \{\mathcal{L}_{\bar{\beta}}(x_{t_j-1}, \tilde{y}, z_{t_j-1}; \bar{M}^{t_j}) - P(\tilde{y})\}.
\end{equation*}
Rearranging the terms, we obtain that
\begin{equation*}
    P(y_{t_i}) \le P(\tilde{y}) + \sup\limits_{j \ge 1} \{\mathcal{L}_{\bar{\beta}}(x_{t_j-1}, \tilde{y}, z_{t_j-1}; \bar{M}^{t_j}) - P(\tilde{y})\} - \inf\limits_{j\ge 1} \{\mathcal{L}_{\bar{\beta}}(x_{t_j-1}, y_{t_j}, z_{t_j-1}; \bar{M}^{t_j}) - P(y_{t_j})\}
\end{equation*}
for all $i\geq 1$.

Taking the limit, and using the fact that 
\begin{equation*}
    P(\tilde{y}) + \sup\limits_{j \ge 1} \{\mathcal{L}_{\bar{\beta}}(x_{t_j-1}, \tilde{y}, z_{t_j-1}; \bar{M}^{t_j}) - P(\tilde{y})\} - \inf\limits_{j\ge 1} \{\mathcal{L}_{\bar{\beta}}(x_{t_j-1}, y_{t_j}, z_{t_j-1}; \bar{M}^{t_j}) - P(y_{t_j})\}
\end{equation*}
is a sum comprising 3 finite elements, non of which depend on $i$, yields 
\begin{align*}
    &\liminf\limits_{i \ge 1} P(y_{t_i}) \\&\le \liminf\limits_{i \ge 1} \{P(\tilde{y}) + \sup\limits_{j \ge 1} \{\mathcal{L}_{\bar{\beta}}(x_{t_j-1}, \tilde{y}, z_{t_j-1}; \bar{M}^{t_j}) \\&\;\;- P(\tilde{y})\} - \inf\limits_{j \ge 1} \{\mathcal{L}_{\bar{\beta}}(x_{t_j-1}, y_{t_j}, z_{t_j-1}; \bar{M}^{t_j}) - P(y_{t_j})\} \} \\&= P(\tilde{y}) + \sup\limits_{j \ge 1} \{\mathcal{L}_{\bar{\beta}}(x_{t_j-1}, \tilde{y}, z_{t_j-1}; \bar{M}^{t_j}) - P(\tilde{y})\} \\&\;\;-\inf\limits_{j\ge 1} \{\mathcal{L}_{\bar{\beta}}(x_{t_j-1}, y_{t_j}, z_{t_j-1}; \bar{M}^{t_j}) - P(y_{t_j})\}.
\end{align*}
Combining the latter with \eqref{P value liminf} then results with
\begin{align*}
    P(y^*) &\le  P(\tilde{y}) + \sup\limits_{j \ge 1} \{\mathcal{L}_{\bar{\beta}}(x_{t_j-1}, \tilde{y}, z_{t_j-1}; \bar{M}^{t_j}) - P(\tilde{y})\} \\&\;\;- \inf\limits_{j\ge1} \{\mathcal{L}_{\bar{\beta}}(x_{t_j-1}, y_{t_j}, z_{t_j-1}; \bar{M}^{t_j}) - P(y_{t_j})\}.
\end{align*}
Recalling once again that 
\begin{equation*}
    P(\tilde{y}) + \sup\limits_{j \ge 1} \{\mathcal{L}_{\bar{\beta}}(x_{t_j-1}, \tilde{y}, z_{t_j-1}; \bar{M}^{t_j}) - P(\tilde{y})\} - \inf\limits_{j\ge 1} \{\mathcal{L}_{\bar{\beta}}(x_{t_j-1}, y_{t_j}, z_{t_j-1}; \bar{M}^{t_j}) - P(y_{t_j})\}
\end{equation*}
is a constant, it immediately follows that $P(y^*) < \infty. $
%\begin{equation*}
%    P(y^*) < \infty.
%\end{equation*}
Therefore, 
\begin{equation*}
    \mathcal{L}_{\bar{\beta}}(x_{t_i - 1}, y^*, z_{t_i - 1}; \bar{M}^{t_i}) < \infty.
\end{equation*}
Since $y_{t_i}$ is the minimizer of $\mathcal{L}_{\bar{\beta}}(x_{t_i - 1}, y, z_{t_i - 1}; \bar{M}^{t_i})$, 
\begin{equation*}
    \mathcal{L}_{\bar{\beta}}(x_{t_i - 1}, y_{t_i}, z_{t_i - 1}; \bar{M}^{t_i}) \le \mathcal{L}_{\bar{\beta}}(x_{t_i - 1}, y^*, z_{t_i - 1}; \bar{M}^{t_i}) < \infty
\end{equation*}
for all $i$.
Taking the limit, we obtain that
\begin{equation*}
    \limsup\limits_{i\to\infty}\mathcal{L}_{\bar{\beta}}(x_{t_i - 1}, y_{t_i},z_{t_i - 1}; \bar{M}^{t_i}) \le \mathcal{L}_{\bar{\beta}}(x^*, y^* ,z^*; \mathbb{E}[M]) = \mathcal{L}_{\bar{\beta}}(x^*, y^* ,z^*).
\end{equation*}
Since all the elements of $\mathcal{L}_{\bar{\beta}}(x,y,z;M)$ other than $P$ are continuous, we have that $\limsup\limits_{i\to\infty} P(y_{t_i}) \le P(y^*).$
%\begin{equation*}
%    \limsup\limits_{i\to\infty} P(y_{t_i}) \le P(y^*).
%\end{equation*}
Combining this with \eqref{P value liminf} we can finally conclude that $\lim\limits_{i\to\infty} P(y_{t_i}) = P(y^*). $ %\Halmos
\end{proof}

\Cref{thm:2} essentially provides us the guarantee that any accumulation point of the meta algorithm \cref{alg:1} is almost surely a critical point of \cref{eq:9}. 
All that remains is to show that the assumption  $\lim\limits_{t\to\infty} \|x_{t+1}-x_t\|^2 + \|y_{t+1}-y_t\|^2 + \|z_{t+1}-z_t\|^2 = 0$ indeed holds true under our blanket assumptions on the model.
We do so in the proof process of our main result stated by \Cref{thm:3}.

\begin{thm}\label{thm:3}
Suppose that \cref{assum:iid}, \Cref{assum:1}, \cref{corollary:1}  and \Cref{assum:2} hold true. 
Let $\theta_t$ be chosen according to the sampling regime in \Cref{definition:2}, and let $\{x_t, y_t, z_t\}_{t > 0}$ be the sequence generated by \Cref{alg:1}.
Then for every cluster point $(x^*, y^*, z^*)$ of $\{x_t, y_t, z_t\}_{t > 0}$, $x^*$ is a critical point of \eqref{eq:9} almost surely.
\end{thm}
The proof of \cref{thm:3} is too long to be included in the main text. Therefore, the full proof is deferred to \cref{sec:Appendix_A}. Here, we provide a sketch of the proof.
\begin{proof}[Sketch Proof of {\cref{thm:3}}]
    Recall that according to \cref{adaptive_penalty_oracle_requirements_defin}, there exists an index $K_{stable}$, starting from which the penalty parameter no longer increases and the $x$ updates have a sufficient decrease property. Our proof focuses on $t > K_{stable}$. 
    Our goal is to use \cref{thm:2}. Therefore, it is sufficient to show that 
    \begin{equation*}
        \lim\limits_{t\to\infty} \left\Vert x_{t+1} - x_t \right\Vert^2 + \left\Vert y_{t+1} - y_t \right\Vert^2 + \left\Vert z_{t+1} - z_t \right\Vert^2 = 0.
    \end{equation*}
    We use \cref{lem:4} to show that $\left\Vert z_{t+1}-z_t \right\Vert^2$ and $\left\Vert y_{t+1} - y_t\right\Vert^2$ are $O\left(\left\Vert x_{t+1} - x_t \right\Vert^2 \right)$ -- hence, proving $\lim\limits_{t\to\infty} \left\Vert x_{t+1} - x_t \right\Vert^2 = 0$
    is sufficient. A careful analysis of each component at the right hand side of the telescopic sum
    \begin{align*}
    \mathcal{L}_{\bar{\beta}}(x_{t+1},y_{t+1},z_{t+1}; \bar{M}^{t+1}) &- \mathcal{L}_{\bar{\beta}}(x_t,y_t,z_t; \bar{M}^t) \\ 
    =& \mathcal{L}_{\bar{\beta}}(x_{t+1},y_{t+1},z_{t+1}; \bar{M}^{t+1}) - \mathcal{L}_{\bar{\beta}}(x_{t+1},y_{t+1},z_t; \bar{M}^{t+1}) \\ 
    &+ \mathcal{L}_{\bar{\beta}}(x_{t+1},y_{t+1},z_t; \bar{M}^{t+1}) - \mathcal{L}_{\bar{\beta}}(x_t,y_{t+1},z_t; \bar{M}^{t+1}) \\
    &+ \mathcal{L}_{\bar{\beta}}(x_t,y_{t+1},z_t; \bar{M}^{t+1}) - \mathcal{L}_{\bar{\beta}}(x_t,y_t,z_t; \bar{M}^{t+1}) \\
    &+ \mathcal{L}_{\bar{\beta}}(x_t,y_t,z_t; \bar{M}^{t+1}) - \mathcal{L}_{\bar{\beta}}(x_t,y_t,z_t; \bar{M}^t),
\end{align*}
    shows that 
    \begin{equation*}
        \mathcal{L}_{\bar{\beta}}(x_{t+1},y_{t+1},z_{t+1}; \bar{M}^{t+1}) - \mathcal{L}_{\bar{\beta}}(x_t,y_t,z_t; \bar{M}^t) \le - O\left(\left\Vert x_{t+1} - x_t \right\Vert^2\right) + d_{t},
    \end{equation*}
    where $d_t$ is a random variable for which $\sum\limits_{t+1}^\infty d_{t} < \infty$ almost surely. Summing over $t$, using the boundedness of the iterates (\cref{assum:2}) and the properness of each component of $\mathcal{L}_{\bar{\beta}}(x_t,y_t,z_t; \bar{M}^t)$, we conclude that
    \begin{equation*}
        -\infty < \liminf\limits_{t\to\infty} \mathcal{L}_{\bar{\beta}}(x_t,y_t,z_t; \bar{M}^t) = -\sum\limits_{t=1}^\infty O\left(\left\Vert x_{t+1} - x_t \right\Vert^2\right) + const,
    \end{equation*}
    and therefore that $\lim\limits_{t\to\infty} \left\Vert x_{t+1} - x_t\right\Vert^2 = 0$.
\end{proof}

\section{Conclusion}\label{sec:conclusion}
This paper  proposes a meta algorithm, along with two of its implementations, and established its theoretical guarantees, to address a stochastic composite optimization problem that captures realistic scenarios and application but is not addressed by the literature. 
The problem's formulation combines the flexibility of l.s.c functions that are common in the Augmented Lagrangian literature on convex optimization, with the random mapping that is often seen in the Nonconvex Stochastic Composite Optimization literature. 
Our results motivate, and lay the foundations, for future research with relaxed assumptions or more general model such as  non-linear stochastic mappings and non-i.i.d sampling processes.

% Appendix here
% Options are (1) APPENDIX (with or without general title) or 
%             (2) APPENDICES (if it has more than one unrelated sections)
% Outcomment the appropriate case if necessary
%
% \begin{APPENDIX}{<Title of the Appendix>}
% \end{APPENDIX}
%
%   or 
%
% \begin{APPENDICES}
% \section{<Title of Section A>}
% \section{<Title of Section B>}
% etc
% \end{APPENDICES}

\appendix
\section{Mathematical Ingredients Proofs}\label{sec:mathematical_ingredients_proofs}
\begin{proof}[Proof of {\cref
{lemma_matrix_extension}}]
We will see that $dim(span(\mathbb{E}[M']))=n$ by induction. First, we note that 
\begin{equation*}
    dim(span(\mathbb{E}[M]))=m.
\end{equation*}
Since $span(\mathbb{E}[M])$ is an $m$-dimensional subspace of $\real^n$, it's Lebesgue measure is $0$; see \citet[Theorem 2.20 (e) and Determinants 2.23]{rudin_real_complex_analysis}.
Thus, for
\begin{equation*}
    v_1 \sim Unif(\{v\in \real^n : \|v\|=1\}),
\end{equation*} 
it follows that
\begin{equation*}
    Prob(v_1 \in span(\mathbb{E}[M])) = 0,
\end{equation*}
and therefore
\begin{equation*}
    Prob(dim(span(\mathbb{E}[M'_1]))=m+1) = 1.
\end{equation*} 
Next, we assume that $Prob(dim(span(\mathbb{E}[M'_i]))=m+i)=1$ for $i < n-m$, and show that $Prob(dim(span(\mathbb{E}[M'_{i+1}]))=m+i+1) = 1$.

Note that the event $dim(span(\mathbb{E}[M'_{i}]))=m+i$ is contained the event $dim(span(\mathbb{E}[M'_{i+1}]))=m+i+1$ -- that is, it is impossible that the event $dim(span(\mathbb{E}[M'_{i+1}]))=m+i+1$ occurs if $dim(span(\mathbb{E}[M'_{i}]))=m+i$ does not.
Using this insight we can deduce that
\begin{align*}
    &Prob(dim(span(\mathbb{E}[M'_{i+1}]))=m+i+1) \\& = Prob\left(\{dim(span(\mathbb{E}[M'_{i+1}]))=m+i+1\} \cap \{dim(span(\mathbb{E}[M'_i]))=m+i\}\right).
\end{align*}
By the definition of conditional probability, it then follows that
\begin{align*}
    &Prob(\{dim(span(\mathbb{E}[M'_{i+1}]))=m+i+1\} \cap \{dim(span(\mathbb{E}[M'_i]))=m+i\})\\& = Prob(dim(span(\mathbb{E}[M'_{i+1}]))=m+i+1 \;|\;  dim(span(\mathbb{E}[M'_i]))=m+i) \\& \cdot Prob(dim(span(\mathbb{E}[M'_i]))=m+i).
\end{align*}
By the induction assumption, $Prob(dim(span(\mathbb{E}[M'_i]))=m+i) = 1$. Therefore
\begin{align*}
     &Prob(dim(span(\mathbb{E}[M'_{i+1}]))=m+i+1) \\&= Prob(dim(span(\mathbb{E}[M'_{i+1}]))=m+i+1 \;|\; dim(span(\mathbb{E}[M'_i]))=m+i).
\end{align*}
As we have argued for the base case of the induction, the Lebesgue measure of the span of $\mathbb{E}([M'_i])$ is $0$, and therefore, since
\begin{equation*}
    v_{i+1} \sim Unif(\{v\in \real^n : \|v\|=1\}),
\end{equation*}
it follows that
\begin{equation*}
    Prob(v_{i+1} \in span(\mathbb{E}[M'_i])) = 0.
\end{equation*}
Consequently,
\begin{equation*}
    Prob(dim(span(\mathbb{E}[M'_{i+1}]))=m+i+1 \;|\; dim(span(\mathbb{E}[M'_i]))=m+i) = 1.
\end{equation*}
Hence, for $M'_{n-m}$, $Prob(dim(span(\mathbb{E}[M'_{n-m}]))=n) = 1$. We denote $M'_{n-m}$ by $M'$. Since the dimension of the span of $M'$ is $n$ almost surely, it follows that 
\begin{equation*}
    Prob(rank(\mathbb{E}[M'])=n) = 1.
\end{equation*}
%{\color{Green} [\Cref{lemma_matrix_extension} OK]}
\end{proof}

\begin{proof}[{\cref{lemma_empirical_lipschitz_bounded_by_lipschitz}}]
If for every $t$, $x_{t+1} = x_t$, then $L^e_F (\{x_t\}_{t \ge 0}) = 0$. Since $L_F \ge 0$, the result trivially holds true.
Otherwise, there exists $t$ such that $x_{t+1} \ne x_t$. Since for every $x_{t+1} \ne x_t$, 
\begin{equation*}
    \dfrac{\|F(x_{t+1}) - F(x_t)\|}{\|x_{t+1} - x_t\|} \le \sup\limits_{y \ne x} \dfrac{\|F(x) - F(y) \|}{\|x - y\|},
\end{equation*}
it follows that
\begin{equation*}
    L^e_F(\{x_t\}_{t \ge 0}) = \sup\limits_{t \ge 0, x_{t+1} \ne x_t} \dfrac{\|F(x_{t+1}) - F(x_t)\|}{\|x_{t+1} - x_t\|} \le \sup\limits_{y \ne x} \dfrac{\|F(x) - F(y) \|}{\|x - y\|} = L_F. %\Halmos 
\end{equation*}
\end{proof}

\section{Sampling Mechanism Proofs}
\begin{proof}[{\cref{lem:1}}]
Note that by definition \eqref{eq:error matrix}, $\mathbb{E}[\delta_t] = 0$. Additionally, note  that $\left[\delta_t\right]_{i,j} = \dfrac{1}{\theta_t}\sum\limits_{k=1}^{\theta_t} M^k_{i,j} - [\mathbb{E}[M]]_{i,j}$, and that the matrices $M^k$ are i.i.d, hence $Var\left[\left[\delta_t\right]_{i,j}\right] = \dfrac{1}{\theta_t} Var \left[M_{i,j} - [\mathbb{E}[M]]_{i,j}\right]$. Since variance is invariant under translations on the real line, it follows that
\begin{equation}\label{eq:730}
     Var[[\delta_t]_{i,j}] = \dfrac{1}{\theta_t} Var[M_{i,j}].
\end{equation}
Subsequently, using Chebyshev's inequality we have that
\begin{equation*}
    \mathbb{P}(|[\delta_t]_{i,j}| > \eta) \le \dfrac{Var[|[\delta_t]_{i,j}|]}{\eta^2} = \dfrac{Var[M_{i,j}]}{\theta_t \eta^2}.
\end{equation*}
Thus, choosing $\theta_t = t ^ {2 + \epsilon}$, $\eta = \dfrac{1}{t^{0.5 + 0.25 \epsilon}}$, and applying Chebyshev's inequality yields
\begin{equation}\label{eq:1085}
     \mathbb{P}\left(|[\delta_t]_{i,j}| > \dfrac{1}{t^{0.5 + 0.25 \epsilon}} \right) \le \dfrac{Var[M_{i,j}]}{t^{1+0.5\epsilon}}.
\end{equation}

Consider the \textit{element-wise} matrix norm $\|\delta_t\|_{1,1} = \sum_{i=1}^n \sum_{j=1}^n |[\delta_t]_{i,j}|$. If $\|\delta_t\|_{1,1} > \dfrac{n^2}{t^{0.5 + 0.25\epsilon}}$, then by the pigeonhole principle, at least one of the events $|[\delta_t]_{i,j}| > \dfrac{1}{t^{0.5 + 0.25 \epsilon}}$ occurred. Therefore, 
\begin{equation*}
    \mathbb{P}\left(\|\delta_t\|_{1,1} > \dfrac{n^2}{t^{0.5 + 0.25\epsilon}} \right) \le \mathbb{P} \left( \bigcup_{i,j} \left\{|[\delta_t]_{i,j}| > \dfrac{1}{t^{0.5 + 0.25 \epsilon}}\right\} \right).
\end{equation*}
Consequently, using the union bound together with \eqref{eq:1085} implies that
\begin{equation*}
    \mathbb{P}\left(\|\delta_t\|_{1,1} > \dfrac{n^2}{t^{0.5 + 0.25\epsilon}} \right) \le \dfrac{1}{t^{1+0.5\epsilon}}\sum_{i=1}^n \sum_{j=1}^n Var[M_{i,j}].
\end{equation*}

By the equivalence between norms in $\real^{n\times n}$, there exist $a,A > 0$, such that $ a \cdot \|\delta_t\|_{1,1} \le \|\delta_t\|_{2,2} \le A \cdot \|\delta_t\|_{1,1}$,
%\begin{equation*}
%    a \cdot \|\delta_t\|_{1,1} \le \|\delta_t\|_{2,2} \le A \cdot \|\delta_t\|_{1,1},
%\end{equation*}
where $\|\delta_t\|_{2,2}$ is the induced $\ell_2$ norm on the matrix $\delta_t$. %; see \cite[Definition 1.30]{Fabian2010BanachST} for the definition of equivalence of norms, and \cite[Proposition 1.36]{Fabian2010BanachST} for the equivalence of norms in finite dimensional vector spaces.
%Since we use $\|\delta_t\| = \|\delta_t\|_{2,2}$ by default, 
It follows that
\begin{equation*}
    \mathbb{P}\left(\|\delta_t\| > \dfrac{a\cdot n^2}{t^{0.5 + 0.25\epsilon}} \right) \le \dfrac{1}{t^{1+0.5\epsilon}}\sum_{i=1}^n \sum_{j=1}^n Var[M_{i,j}].
\end{equation*}
 Finally, setting $r = \sum_{i=1}^n \sum_{j=1}^n Var[M_{i,j}]$ and using the fact that  $\sum_{i=1}^n \sum_{j=1}^n Var[M_{i,j}]$ is finite (cf. \cref{assum:1}), we conclude that 
\begin{equation*}
    \sum_{t=1}^\infty  \mathbb{P} \left(\|\delta_t\| > \dfrac{a\cdot n^2}{t^{0.5 + 0.25 \epsilon}} \right) \le  \sum_{t=1}^\infty  \dfrac{r}{t^{1+0.5\epsilon}} < \infty,
\end{equation*}
and subsequently, the lemma follows from the Borel-Cantelli Theorem. %\Cref{thm:1}. 
%\Halmos
\end{proof}

\begin{proof}[{\cref{lem:2}}]
Since $M^t_{i,j}$ is sub-Gaussian, $M^t_{i,j} - \mathbb{E}[M]_{i,j}$ and $\dfrac{1}{\theta_t}(M^t_{i,j} - \mathbb{E}[M]_{i,j})$ are also sub-Gaussian (cf. \cref{rem:1}).
Recall that $\delta_t = \bar{M}^t - \mathbb{E}[M] = \sum_{l=1}^{\theta_t} \dfrac{1}{\theta_t}(M^l - \mathbb{E}[M])$,
%\begin{equation*}
%    \delta_t = \bar{M}^t - \mathbb{E}[M] = \sum_{l=1}^{\theta_t} \dfrac{1}{\theta_t}(M^l - \mathbb{E}[M]),
%\end{equation*}
where $\bar{M}^t = \dfrac{1}{\theta_t} \sum\limits_{l=1}^{\theta_t} M^l$ is the estimator of $M$.
Hence,
\begin{align*}
    \mathbb{P} \left( \left|[\delta_t]_{i,j} \right| > \dfrac{1}{t^{0.5 + 0.25 \cdot \epsilon}} \right) &= \mathbb{P} \left( \left|\bar{M}^t_{i,j} - \mathbb{E}[M]_{i,j} \right| > \dfrac{1}{t^{0.5 + 0.25 \cdot \epsilon}} \right) \\&= \mathbb{P} \left( \left|\sum_{l=1}^{\theta_t} \dfrac{1}{\theta_t}(M^l_{i,j} - \mathbb{E}[M]_{i,j}) \right| > \dfrac{1}{t^{0.5 + 0.25 \cdot \epsilon}} \right).
\end{align*}

If $\mathbb{P}\left([\delta_t]_{i,j} \ne 0\right) = 0$. Then trivially for any $C>0$ it holds that \begin{equation}
\label{eq:793}
    \mathbb{P} \left( \left|[\delta_t]_{i,j} \right| > \dfrac{1}{t^{0.5 + 0.25 \cdot \epsilon}} \right) \le 2 \exp \left(-C \cdot t^{0.5 \epsilon} \right).
\end{equation}
Otherwise, if $\mathbb{P}\left([\delta_t]_{i,j} \ne 0\right) > 0$, then by \cref{rem:1} $\|\dfrac{1}{\theta_t} (M^l_{i,j} - \mathbb{E}[M]_{i,j})\|_{\psi_2} > 0$, and we have by the General Hoeffding's Inequality with $m=\theta_t$ and $k=1/t^{0.5 + 0.25\epsilon}$ that
\begin{align*}
    \mathbb{P} \left( \left|[\delta_t]_{i,j} \right| > \dfrac{1}{t^{0.5 + 0.25 \cdot \epsilon}} \right) &= \mathbb{P} \left( \left|\sum_{l=1}^{\theta_t} \dfrac{1}{\theta_t}(M^l_{i,j} - \mathbb{E}[M]_{i,j}) \right| > \dfrac{1}{t^{0.5 + 0.25 \cdot \epsilon}} \right) \\&\le 2 \exp \left(-\dfrac{c \cdot \dfrac{1}{t^{1+0.5\epsilon}}}{\sum_{l=1}^{\theta_t} \|\dfrac{1}{\theta_t} (M^l_{i,j} - \mathbb{E}[M]_{i,j})\|_{\psi_2}^2} \right) \\&= 2 \exp \left(-\dfrac{c \cdot \dfrac{1}{t^{1+0.5\epsilon}}}{\theta_t \cdot \dfrac{1}{\theta_t^2} \|M^0_{i,j} - \mathbb{E}[M]_{i,j}\|_{\psi_2}^2} \right),
\end{align*}
where the last inequality follows from the assumption that $\{M^l\}_{l\geq 0}$ are i.i.d and the norm property $\|\lambda \xi\| = |\lambda| \|\xi\|$.

Denoting $C = \dfrac{c}{\|M^0_{i,j} - \mathbb{E}[M]_{i,j}\|_{\psi_2}^2} $
%\begin{equation*}
%    C = \dfrac{c}{\|M^0_{i,j} - \mathbb{E}[M]_{i,j}\|_{\psi_2}^2}
%\end{equation*}
and rearranging the elements we obtain that
\begin{equation*}
    \mathbb{P} \left( \left|[\delta_t]_{i,j} \right| > \dfrac{1}{t^{0.5 + 0.25 \cdot \epsilon}} \right) \le 2 \exp \left(-C \cdot \dfrac{\theta_t}{t^{1+0.5\epsilon}} \right).
\end{equation*}
By the choice of $\theta_t$ we then have that
\begin{equation}
\label{eq:792}
    \mathbb{P} \left( \left|[\delta_t]_{i,j} \right| > \dfrac{1}{t^{0.5 + 0.25 \cdot \epsilon}} \right) \le 2 \exp \left(-C \cdot t^{0.5\epsilon} \right).
\end{equation}
Thus, whether $\mathbb{P}\left([\delta_t]_{i,j} \ne 0\right) = 0$ or $\mathbb{P}\left([\delta_t]_{i,j} \ne 0\right) > 0$, there exists a constant $C>0$, such that 
\begin{equation}
\label{eq:794}
    \mathbb{P} \left( \left|\bar{M}^t_{i,j} - \mathbb{E}[M]_{i,j} \right| > \dfrac{1}{t^{0.5 + 0.25 \cdot \epsilon}} \right) \le 2 \exp \left(-C \cdot t^{0.5 \epsilon} \right).
\end{equation}

Consider the entry-wise $\ell_1$ norm of $\delta_t = \bar{M}^t - \mathbb{E}[M]$ given by $\|\delta_t\|_{1,1} = \sum_{i=1}^n \sum_{j=1}^n \left| \bar{M}^t_{i,j} - \mathbb{E}[M]_{i,j} \right|. $
%\begin{equation*}
%    \|\delta_t\|_{1,1} = \sum_{i=1}^n \sum_{j=1}^n \left| \bar{M}^t_{i,j} - \mathbb{E}[M]_{i,j} \right|.
%\end{equation*}
Note that for the event $\|\delta_t\|_{1,1} > \dfrac{n^2}{t^{0.5 + 0.25 \cdot \epsilon}} $
%\begin{equation*}
%    \|\delta_t\|_{1,1} > \dfrac{n^2}{t^{0.5 + 0.25 \cdot \epsilon}}
%\end{equation*}
to occur, by the pigeonhole principle there has to be at least one pair of indices ${i,j}$ such that $\left|\bar{M}^t_{i,j} - \mathbb{E}[M]_{i,j} \right| > \dfrac{1}{t^{0.5 + 0.25 \cdot \epsilon}}$. 
Therefore, 
\begin{equation*}
    \mathbb{P} \left( \|\delta_t\|_{1,1} > \dfrac{n^2}{t^{0.5 + 0.25 \cdot \epsilon}} \right) \le \mathbb{P} \left( \bigcup_{i,j} \left\{\left|\bar{M}^t_{i,j} - \mathbb{E}[M]_{i,j} \right| > \dfrac{1}{t^{0.5 + 0.25 \cdot \epsilon}}\right\} \right).
\end{equation*}
Using the union bound, it follows that $\mathbb{P} \left( \|\delta_t\|_{1,1} > \dfrac{n^2}{t^{0.5 + 0.25 \cdot \epsilon}} \right) \le 2n^2 \exp \left(-C t^{0.5 \epsilon} \right).$
%\begin{equation*}
%    \mathbb{P} \left( \|\delta_t\|_{1,1} > \dfrac{n^2}{t^{0.5 + 0.25 \cdot \epsilon}} \right) \le 2n^2 \exp \left(-C t^{0.5 \epsilon} \right).
%\end{equation*}
As stated in \cref{lem:1}, by the equivalence between norms in $\real^{n\times n}$, there exist $a,A > 0$, such that $ a \cdot \|\delta_t\|_{1,1} \le \|\delta_t\|_{2,2} \le A \cdot \|\delta_t\|_{1,1}. $
%\begin{equation*}
%    a \cdot \|\delta_t\|_{1,1} \le \|\delta_t\|_{2,2} \le A \cdot \|\delta_t\|_{1,1}.
%\end{equation*}
Thus, It follows that
\begin{equation*}
    \mathbb{P} \left( \|\delta_t\| > \dfrac{a\cdot n^2}{t^{0.5 + 0.25 \cdot \epsilon}} \right) \le 2n^2 \exp \left(-C t^{0.5 \epsilon} \right).
\end{equation*}
Summing over $t$ then yields that
\begin{equation*}
    \sum_{t=1}^\infty \mathbb{P} \left(\|\delta_t\| > \dfrac{a\cdot n^2}{t^{0.5 + 0.25 \cdot \epsilon}}  \right) \le \sum_{t=1}^\infty 2 n^2 \cdot \exp \left(-C t^{0.5 \epsilon} \right) < \infty,
\end{equation*}
and the result follows from the Borel-Cantelli Theorem. %\Cref{thm:1}.
%\Halmos 
\end{proof}

\begin{proof}[{\cref{lem:3}}] %todo potential delete
The proof follows immediately from the fact the function that returns the minimum eigenvalue of a symmetric matrix, $\lambda_{min}: S^n \rightarrow \real$, is continuous, and the fact that $\sigma > 0$. The continuity of $\lambda_{min}$ follows from \cite[Theorem 2.4.9.2]{Horn_Johnson_1985}.
\end{proof}

\section{ISAD for Problems with Bounded Hessian Proofs}
\begin{proof}[{\cref{simplified_beta_oracle_fulfills_requirements_lemma}}]
Our goal is to show that there exists an index $K_{stable}$ such that (i) For all $k>K_{stable}$, $\beta_k = \beta_{K_{stable}}$; and (ii) Let $\{x_t\}_{t\ge0}$ be the sequence generated by \Cref{alg_beta_oracle_simplified}. There exists $\rho > 0$ such that for all $k>K_{stable}$
    \begin{align*}
        &g^{t+1}(x_{t+1}) - g^{t+1}(x_t) \le -\dfrac{\rho}{2}\|x_{t+1}-x_t\|^2 \text{ and,} \\
        &\rho > \dfrac{8}{\beta_k \sigma} \left(\left(L^e_{\nabla h + \nabla \phi}(\{x_t\}_{t\ge0})\right)^2 + \left(L^e_{\nabla \phi}(\{x_t\}_{t\ge0})\right)^2 \right).
    \end{align*}
%
%\begin{enumerate}
%    \item For all $k>K_{stable}$, $\beta_k = \beta_{K_{stable}}$;
% 
%    \item Let $\{x_t\}_{t\ge0}$ be the sequence generated by \Cref{alg_beta_oracle_simplified}. There exists $\rho > 0$ such that for all $k>K_{stable}$
%    \begin{equation*}
%        g^{t+1}(x_{t+1}) - g^{t+1}(x_t) \le -\dfrac{\rho}{2}\|x_{t+1}-x_t\|^2 \text{ and } \rho > \dfrac{8}{\beta_k \sigma} \left(L^e_{\nabla h + \nabla \phi}(\{x_t\}_{t\ge0}) + L^e_{\nabla \phi}(\{x_t\}_{t\ge0}) \right).
%    \end{equation*}
%%    and
%%    \begin{equation*}
%%        \rho > \dfrac{8}{\beta_k \sigma} \left(L^e_{\nabla h + \nabla \phi}(\{x_t\}_{t\ge0}) + L^e_{\nabla \phi}(\{x_t\}_{t\ge0}) \right).
%%    \end{equation*}
%\end{enumerate}

For the duration of this proof, we extend the notation in \Cref{alg_beta_oracle_simplified} in the following manner: at the end of iteration $k$, $\tilde{\sigma}_k$ and $\beta_k$ are the values of $\tilde{\sigma}$ and $\beta$ respectively; Note that $\tilde{\sigma}_k = \lambda_{\min}((\bar{M}^k)^T \bar{M}^k)$, and that
by \cref{corollary:1}, $ \lambda_{\min}(\mathbb{E}[M]^T \mathbb{E}[M]) = \sigma > 0. $
%\begin{equation*}
%    \lambda_{\min}(\mathbb{E}[M]^T \mathbb{E}[M]) = \sigma > 0.
%\end{equation*}

By \cref{lem:3}, there exists with probability 1 an index $J>0$, such that for all $k>J$, with parameter value of 0.5, it holds that
\begin{equation*}
    0 < 0.5 \sigma < \lambda_{\min}((\bar{M}^k)^T \bar{M}^k) = \tilde{\sigma}_k.
\end{equation*}
Without loss of generality, we shall assume throughout the rest of the proof that $J=0$, and therefore that the condition $\tilde{\sigma}_k = 0$ is always false.
From \Cref{lem:3} we have that for every $0 < \epsilon' < 1$, there exists almost surely $K_1>0$, such that for all $k>K_1$:
\begin{align*}
   (1 - \epsilon') \tilde{\sigma_k} \le \sigma \le (1 + \epsilon') \tilde{\sigma_k} \text{ and }  (1 - \epsilon') \sigma \le \tilde{\sigma}_k \le (1 + \epsilon') \sigma.
\end{align*}
%and 
%\begin{align*}
%    (1 - \epsilon') \sigma &\le \tilde{\sigma}_k \le (1 + \epsilon') \sigma.
%\end{align*}
Assume in contradiction that the number of updates of $\beta$ is infinite, which translates, due to the update mechanism of \cref{alg_beta_oracle_simplified}, to the assumption that there is an  infinite number of iterations is which either $\tilde{\sigma} \beta + \gamma \leq \dfrac{(1 + 0.5\epsilon) \cdot 40 \gamma^2}{\tilde{\sigma}\beta}$ or $ \tilde{\sigma} \beta + \gamma \geq \dfrac{(1 + 2\epsilon) \cdot 40 \gamma^2}{\tilde{\sigma}\beta}$.
Let $\{t_i\}_{i\ge0} \subseteq \{k \}_{k>K_1}$ be a sequence of indices at which $\beta$ is updated after the $K_1$ iteration.
By the update rule of $\beta$, it holds that $\beta_{t_i}$ is the positive solution to $\tilde{\sigma}_{t_i} \beta + \gamma = \dfrac{(1 + \epsilon) \cdot 40 \gamma^2}{\tilde{\sigma}_{t_i} \beta}$ for any $i\geq 0.$ 
%\dg{Rephrased the previous sentence a bit, to make it clearer where the equation for $\beta_{t_i}$ came from}
%\begin{equation*}
%    \tilde{\sigma}_{t_i} \beta_{t_i} + \gamma = \dfrac{(1 + \epsilon) \cdot 24 \gamma}{\tilde{\sigma}_{t_i} \beta_{t_i}}, \qquad \forall i\geq 0.
%\end{equation*}
Consider the $t_i + j$ iteration where $i\geq 0$, and $j\geq 1$ satisfies that 
\begin{equation}
    \label{eq:1776}
    \beta_{t_i + j - 1} = \beta_{t_i};
\end{equation}
note that $j$ is well-defined since the condition $\beta_{t_i + j - 1} = \beta_{t_i}$  holds as a tautology for $j=1$.
For this $j$, by \eqref{eq:1776} and the fact that $\beta$ is updated at the $t_i$ iteration, $\beta_{t_i + j -1}$ satisfies that
\begin{equation}\label{beta_equation_simple_case}
    \tilde{\sigma}_{t_i} \beta_{t_i+j-1} + \gamma = \dfrac{(1 + \epsilon) \cdot 40 \gamma^2}{\tilde{\sigma}_{t_i}\beta_{t_i+j-1}}.
\end{equation}

Since $t_i > K_1$, we have that $ (1-\epsilon') \sigma < \tilde{\sigma}_{t_i} < (1+\epsilon')\sigma$ and $(1-\epsilon') \sigma < \tilde{\sigma}_{t_i+j} < (1+\epsilon')\sigma.$
%\begin{equation*}
%    (1-\epsilon') \sigma < \tilde{\sigma}_{t_i} < (1+\epsilon')\sigma,
%\end{equation*}
%\begin{equation*}
%    (1-\epsilon') \sigma < \tilde{\sigma}_{t_i+j} < (1+\epsilon')\sigma.
%\end{equation*}
Therefore, it follows that
\begin{equation*}
    \dfrac{1-\epsilon'}{1+\epsilon'} < \dfrac{\tilde{\sigma}_{t_i}}{\tilde{\sigma}_{t_i+j}} < \dfrac{1+\epsilon'}{1-\epsilon'} \ \text{ and } \ \dfrac{1-\epsilon'}{1+\epsilon'} < \dfrac{\tilde{\sigma}_{t_i+j}}{\tilde{\sigma}_{t_i}} < \dfrac{1+\epsilon'}{1-\epsilon'}.
\end{equation*}
%and
%\begin{equation*}
%    \dfrac{1-\epsilon'}{1+\epsilon'} < \dfrac{\tilde{\sigma}_{t_i+j}}{\tilde{\sigma}_{t_i}} < \dfrac{1+\epsilon'}{1-\epsilon'}.
%\end{equation*}
Subsequently, rewriting the leftmost side of the condition $$\dfrac{(1 + 0.5\epsilon) \cdot 40 \gamma^2}{\tilde{\sigma}\beta} < \tilde{\sigma} \beta + \gamma < \dfrac{(1 + 2\epsilon) \cdot 40 \gamma^2}{\tilde{\sigma}\beta}$$ yields 
\begin{equation}\label{eq:1100}
\begin{aligned}
    \dfrac{(1 + 0.5\epsilon) \cdot 40 \gamma^2}{\tilde{\sigma}_{t_i+j} \beta_{t_i+j-1}} &= \dfrac{\tilde{\sigma}_{t_i}}{\tilde{\sigma}_{t_i+j}} \cdot \dfrac{(1 + 0.5\epsilon) \cdot 40 \gamma^2}{\tilde{\sigma}_{t_i}\beta_{t_i+j-1}} \\&<  \dfrac{1+\epsilon'}{1-\epsilon'} \cdot \dfrac{(1 + 0.5\epsilon) \cdot 40 \gamma^2}{\tilde{\sigma}_{t_i}\beta_{t_i+j-1}} \\
    %&= \dfrac{1+\epsilon'}{1-\epsilon'} \cdot \dfrac{1+0.5\epsilon}{1+\epsilon} \cdot \dfrac{(1 + \epsilon) \cdot 24 \gamma}{\tilde{\sigma}_{t_i}\beta_{t_i+j-1}} \\
    &= \dfrac{1+\epsilon'}{1-\epsilon'} \cdot \dfrac{1+0.5\epsilon}{1+\epsilon} \cdot (\tilde{\sigma}_{t_i}\beta_{t_i+j-1} + \gamma).    
\end{aligned}
\end{equation}
Additionally, by developing the expression $\tilde{\sigma}_{t_i+j} \beta_{t_i+j-1} + \gamma$, we obtain that
\begin{align*}
    \tilde{\sigma}_{t_i+j} \beta_{t_i+j-1} + \gamma = \dfrac{\tilde{\sigma}_{t_i+j}}{\tilde{\sigma}_{t_i}} \cdot \tilde{\sigma}_{t_i} \beta_{t_i+j-1} + \gamma &> \dfrac{1 - \epsilon'}{1 + \epsilon'} \cdot \tilde{\sigma}_{t_i} \beta_{t_i+j-1} + \gamma \\
    &> \dfrac{1 - \epsilon'}{1 + \epsilon'} \cdot (\tilde{\sigma}_{t_i} \beta_{t_i+j-1} + \gamma)
\end{align*}
and 
\begin{align*}
    \tilde{\sigma}_{t_i+j} \beta_{t_i+j-1} + \gamma = \dfrac{\tilde{\sigma}_{t_i+j}}{\tilde{\sigma}_{t_i}} \cdot \tilde{\sigma}_{t_i} \beta_{t_i+j-1} + \gamma &< \dfrac{1 + \epsilon'}{1 - \epsilon'} \cdot \tilde{\sigma}_{t_i} \beta_{t_i+j-1} + \gamma \\
    &< \dfrac{1 + \epsilon'}{1 - \epsilon'} \cdot (\tilde{\sigma}_{t_i} \beta_{t_i+j-1} + \gamma).
\end{align*}
Thus,
\begin{equation}\label{eq:1101}
    \dfrac{1 - \epsilon'}{1 + \epsilon'} \cdot (\tilde{\sigma}_{t_i} \beta_{t_i+j-1} + \gamma) < \tilde{\sigma}_{t_i+j} \beta_{t_i+j-1} + \gamma < \dfrac{1 + \epsilon'}{1 - \epsilon'} \cdot (\tilde{\sigma}_{t_i} \beta_{t_i+j-1} + \gamma).
\end{equation}
Rewriting the expression $\dfrac{(1 + 2\epsilon) \cdot 40 \gamma^2}{\tilde{\sigma}_{t_i+j} \beta_{t_i+j-1}}$ in a similar manner, we have that
\begin{equation}\label{eq:1102}
\begin{aligned}
    \dfrac{(1 + 2\epsilon) \cdot 40 \gamma^2}{\tilde{\sigma}_{t_i+j} \beta_{t_i+j-1}} = \dfrac{\tilde{\sigma}_{t_i}}{\tilde{\sigma}_{t_i+j}} \cdot \dfrac{(1 + 2\epsilon) \cdot 40 \gamma^2}{\tilde{\sigma}_{t_i}\beta_{t_i+j-1}} &>  \dfrac{1-\epsilon'}{1+\epsilon'} \cdot \dfrac{(1 + 2\epsilon) \cdot 40 \gamma^2}{\tilde{\sigma}_{t_i}\beta_{t_i+j-1}} \\
   % &= \dfrac{1-\epsilon'}{1+\epsilon'} \cdot \dfrac{1+2\epsilon}{1+\epsilon} \cdot \dfrac{(1 + \epsilon) \cdot 24 \gamma}{\tilde{\sigma}_{t_i}\beta_{t_i+j-1}} \\
    &= \dfrac{1-\epsilon'}{1+\epsilon'} \cdot \dfrac{1+2\epsilon}{1+\epsilon} \cdot (\tilde{\sigma}_{t_i}\beta_{t_i+j-1} + \gamma).    
\end{aligned}
\end{equation}

Now, for a sufficiently small $\epsilon'$ the following relations hold true:
\begin{enumerate}
    \item $1 < \dfrac{1+0.5\epsilon}{(1+\epsilon')^2} < \dfrac{1+0.5\epsilon}{1+\epsilon'}$,
    
    \item $\dfrac{1+\epsilon'}{1-\epsilon'}(1+0.5\epsilon) < \dfrac{1-\epsilon'}{1+\epsilon'}(1+\epsilon)$,
    
    \item $\dfrac{1+\epsilon'}{1-\epsilon'}(1+\epsilon) < \dfrac{1-\epsilon'}{1+\epsilon'}(1+2\epsilon)$.
\end{enumerate}
Combining \eqref{eq:1100}, \eqref{eq:1101}, \eqref{eq:1102}, and using the above relations, we obtain that
\begin{align*}
    \dfrac{(1 + 0.5\epsilon) \cdot 40 \gamma^2}{\tilde{\sigma}_{t_i+j} \beta_{t_i+j-1}} 
    < \dfrac{1+\epsilon'}{1-\epsilon'} \cdot \dfrac{1+0.5\epsilon}{1+\epsilon} \cdot (\tilde{\sigma}_{t_i}\beta_{t_i+j-1} + \gamma) 
%    &< \dfrac{1 - \epsilon'}{1 + \epsilon'} \cdot (\tilde{\sigma}_{t_i} \beta_{t_i+j-1} + \gamma) \\&< \tilde{\sigma}_{t_i+j} \beta_{t_i+j-1} + \gamma \\
%    &< \dfrac{1 + \epsilon'}{1 - \epsilon'} \cdot (\tilde{\sigma}_{t_i} \beta_{t_i+j-1} + \gamma) \\
    &< \dfrac{1-\epsilon'}{1+\epsilon'} \cdot \dfrac{1+2\epsilon}{1+\epsilon} \cdot (\tilde{\sigma}_{t_i}\beta_{t_i+j-1} + \gamma) \\
    &< \dfrac{(1 + 2\epsilon) \cdot 40 \gamma^2}{\tilde{\sigma}_{t_i+j} \beta_{t_i+j-1}},
\end{align*}
which boils down to
\begin{equation}\label{beta_relation_simple}
    \dfrac{(1 + 0.5\epsilon) \cdot 40 \gamma^2}{\tilde{\sigma}_{t_i+j} \beta_{t_i+j-1}} < \tilde{\sigma}_{t_i+j} \beta_{t_i+j-1} + \gamma < \dfrac{(1 + 2\epsilon) \cdot 40 \gamma^2}{\tilde{\sigma}_{t_i+j} \beta_{t_i+j-1}}.
\end{equation}
Hence, by the update criteria of $\beta$, the penalty $\beta$ does not update at the $t_i + j$ iteration.
In particular for $i=0$, since $j=1$ satisfies \eqref{eq:1776}, we can deduce by induction that for all $j \geq 1$ there is no $\beta$ update in the $t_0 + j$ iteration. 
Therefore, $\beta$  does not update after the $t_0>K_1$ iteration thus contradicting the assumption that $\beta$ is updated infinitely many times and proving the first part of the lemma.

Denote the index of the last $\beta$ update, whose existence is established in the first part, by $K_2$.
To prove the second part, first recall that by \Cref{bounds_for_empirical_lipschitz_constants} it holds that $ \left(L^e_{\nabla h + \nabla \phi}(\{x_t\}_{t\ge0})\right)^2 + \left(L^e_{\nabla \phi}(\{x_t\}_{t\ge0})\right)^2 \le 5 \gamma^2, $
%\begin{equation*}
%    L^e_{\nabla h + \nabla \phi}(\{x_t\}_{t\ge0}) + L^e_{\nabla \phi}(\{x_t\}_{t\ge0}) \le 3 \gamma,
%\end{equation*}
and that $g^{t+1}$ is strongly convex, where we denote its strong convexity constant by $\rho_{t+1}$. By the strong convexity inequality for $g^{t+1}(x)$,
\begin{equation*}
    g^{t+1}(x_{t+1}) - g^{t+1}(x_t) \le \nabla g^{t+1}(x_{t+1})^T (x_{t+1}-x_t) - \dfrac{\rho_{t+1}}{2}\|x_{t+1}-x_t\|^2.
\end{equation*}
Since $x_{t+1}$ minimizes $g^{t+1}(x)$, $\nabla g^{t+1}(x_{t+1}) = 0$, and hence 
\begin{equation*}
    g^{t+1}(x_{t+1}) - g^{t+1}(x_t) \le - \dfrac{\rho_{t+1}}{2}\|x_{t+1}-x_t\|^2.
\end{equation*}
Considering the above, it is sufficient to show that for $K_{stable} = \max \{K_1, K_2\}$, it holds that $\rho_k \sigma\beta_k > 40\gamma^2$ for any $k>K_{stable}. $
%\begin{equation*}
%    \rho_k > \dfrac{24\gamma}{\sigma\beta_k} \qquad \forall k>K_{stable}.
%\end{equation*}

By the definition of $g^{k}$, we have that $\nabla^2 g^{k}(x) = \beta_k (\bar{M}^k)^T \bar{M}^k + \gamma I,
 $
%\begin{equation*}
%    \nabla^2 g^{k}(x) = \beta_k (\bar{M}^k)^T \bar{M}^k + \gamma I,
%\end{equation*}
and therefore, $g^k$ is $\beta_k \lambda_{\min}((\bar{M}^k)^T \bar{M}^k) + \gamma = \beta_k \tilde{\sigma}_k + \gamma$ strongly convex. 

Since $K_{stable} \ge K_1$, for all $k > K_{stable}$ we have that
\begin{align*}
    (1 - \epsilon') \tilde{\sigma_k} \le \sigma \le (1 + \epsilon') \tilde{\sigma_k} \text{ and }  (1 - \epsilon') \sigma \le \tilde{\sigma}_k \le (1 + \epsilon') \sigma.
\end{align*}
%and 
%\begin{align*}
%    (1 - \epsilon') \sigma &\le \tilde{\sigma}_k \le (1 + \epsilon') \sigma.
%\end{align*}

Furthermore, let us choose again an $\epsilon'>0$ sufficiently small such that $1 < \dfrac{1+0.5\epsilon}{1+\epsilon'}.$
%\begin{equation*}
%    1 < \dfrac{1+0.5\epsilon}{1+\epsilon'}.
%\end{equation*}
For all $k> K_{stable}$,
\begin{align*}
    \dfrac{8}{\beta_k \sigma} \left(\left(L^e_{\nabla h + \nabla \phi}(\{x_t\}_{t\ge0})\right)^2 + \left(L^e_{\nabla \phi}(\{x_t\}_{t\ge0})\right)^2 \right) \le \dfrac{40\gamma^2}{\sigma \beta_k} < \dfrac{(1 + 0.5\epsilon) \cdot 40 \gamma^2}{(1+\epsilon')\sigma\beta_k} 
    &< \dfrac{(1 + 0.5\epsilon) \cdot 40 \gamma^2}{\tilde{\sigma}_k\beta_k} \\&< \tilde{\sigma}_k \beta_k + \gamma,
\end{align*}
where the first inequality follows from $\left(L^e_{\nabla h + \nabla \phi}(\{x_t\}_{t\ge0})\right)^2 + \left(L^e_{\nabla \phi}(\{x_t\}_{t\ge0})\right)^2 \le 5 \gamma^2$, the second from $1 < \dfrac{1+0.5\epsilon}{1+\epsilon'}$, the third from $\tilde{\sigma}_k \le (1 + \epsilon') \sigma$, and the fourth since the inequality in line 4 of \cref{alg_beta_oracle_simplified} holds for all $k>K_{stable}\ge K_2$. 
Since $\tilde{\sigma}_k \beta_k + \gamma$ is the strong convexity constant of $g^{k}$, this concludes the proof.
%\Halmos 
\end{proof}

\section{Algorithm Implementation For the General Case Proofs}
\begin{proof} [{\cref{general_beta_oracle_fulfills_requirements}}]
Our goal is to show that there exists an index $K_{stable}>0$ such that (i) For all $k>K_{stable}$, $\beta_k = \beta_{K_{stable}}$; and (ii) Let $\{x_t\}_{t\ge0}$ be the sequence generated by \Cref{alg:1}. There exists $\rho > 0$ such that for all $k>K_{stable}$
    \begin{align*}
        &g^{t+1}(x_{t+1}) - g^{t+1}(x_t) \le -\dfrac{\rho}{2}\|x_{t+1}-x_t\|^2 \text{ and,} \\
        &\rho > \dfrac{8}{\beta_k \sigma} \left(\left(L^e_{\nabla h + \nabla \phi}(\{x_t\}_{t\ge0})\right)^2 + \left(L^e_{\nabla \phi}(\{x_t\}_{t\ge0})\right)^2 \right).
    \end{align*}
%\begin{enumerate}
%    \item For all $k>K_{stable}$, $\beta_k = \beta_{K_{stable}}$. 
% 
%    \item Let $\{x_t\}_{t\ge0}$ be the sequence generated by \Cref{alg:1}. There exists $\rho > 0$ such that for all $k>K_{stable}$
%    \begin{equation*}
%        g^{t+1}(x_{t+1}) - g^{t+1}(x_t) \le -\dfrac{\rho}{2}\|x_{t+1}-x_t\|^2 \text{ and } \rho > \dfrac{8}{\beta_k \sigma} \left(L^e_{\nabla h + \nabla \phi}(\{x_t\}_{t\ge0}) + L^e_{\nabla \phi}(\{x_t\}_{t\ge0}) \right).
%    \end{equation*}
%%    
%%    and
%%    
%%    \begin{equation*}
%%        \rho > \dfrac{8}{\beta_k \sigma} \left(L^e_{\nabla h + \nabla \phi}(\{x_t\}_{t\ge0}) + L^e_{\nabla \phi}(\{x_t\}_{t\ge0}) \right).
%%    \end{equation*}
%\end{enumerate}
As a starting point, first recall that \cref{corollary:1} states that $ \lambda_{\min}(\mathbb{E}[M]^T \mathbb{E}[M]) = \sigma > 0$,
%\begin{equation*}
%    \lambda_{\min}(\mathbb{E}[M]^T \mathbb{E}[M]) = \sigma > 0,
%\end{equation*}
by \Cref{assum:2} there exists $D>0$ such that $\max\limits_{t\geq 1} \{\max \{\|x_t\|, \|y_t\|, \|z_t\|\}\} \le D$,
and that, by \cref{lem:3},  there exists with probability 1 an index $J>0$  such that, 
\begin{equation*}
    0 < 0.5 \sigma < \tilde{\sigma} = \lambda_{\min}((\bar{M}^k)^T \bar{M}^k), \qquad \forall k>J.
\end{equation*}
Without loss of generality, we shall assume throughout the rest of the proof that $J=0$, and therefore, that the condition $\tilde{\sigma}_k = 0$ appearing in \Cref{general_beta_oracle} is always false.

To show that that the number of $\beta$ updates is finite, it is sufficient to show that the condition $ \rho_t \beta_t  \tilde{\sigma}_t > 32(\zeta_{t+1} + \xi_{t+1} + \epsilon)$
%\begin{equation*}
%    \dfrac{1}{4} \rho_t > \dfrac{8(\zeta_{t+1} + \xi_{t+1} + \epsilon)}{\beta_t  \tilde{\sigma}_t}
%\end{equation*}
is violated at most a finite number of times. 

% Deleted the citation of \cite[Theorem 6.8]{kato2013perturbation} in the justification that $\lambda_{\min}(\nabla^2 h(x)+\nabla^2 \phi(x))$, $\lambda_{max}(\nabla^2 h(x)+\nabla^2 \phi(x))$, $\lambda_{\min}(\nabla^2 \phi(x))$, $\lambda_{max}(\nabla^2 \phi(x))$ are all continuous, since that requires the linear operator (in that case, the hessian) to be differentiable, meaning that $h$ and $\phi$ need to be thrice differentiable instead of twice. 
% In practice, the maximum and minimum eigenvalue functions of a continuous hessian are continuous, because the characteristic polynomial coefficients are continuous in the matrix elements, the characteristic polynomial roots are continuous in the polynomial coefficients, and the value of the k'th root is a continuous function of the roots. 
% This is a long and cumbersome explanation that explains formally what all readers will know informally, and therefore I'm leaving it as a comment.
We prove that $\lambda_{\min} \left( \nabla^2 h(x) + \nabla^2 \phi(x) \right)$ is lower bounded over $\mathcal{B}[0,D]$, a fact that will be used in the proof shortly. 
Note that both $h$ and $\phi$ are twice continuously differentiable, and therefore their hessians $\nabla^2 h(x)$ and $\nabla^2 \phi(x)$ are continuous. 
It follows that the functions $\lambda_{\min}(\nabla^2 h(x)+\nabla^2 \phi(x))$, $\lambda_{max}(\nabla^2 h(x)+\nabla^2 \phi(x))$, $\lambda_{\min}(\nabla^2 \phi(x))$, $\lambda_{max}(\nabla^2 \phi(x))$ are all continuous in the ball $\mathcal{B}[0, D]$. Since the ball $\mathcal{B}[0, D]$ is compact, by using Weierstrass' Extreme Value Theorem, the following maximum and minimum values exist
\begin{equation*}
    \lambda^{h+\phi}_{\min} = \min\limits_{x \in B(0,D)} \lambda_{\min}(\nabla^2 \phi(x) + \nabla^2 h(x)) \text{ and } \lambda^{h+\phi}_{max} = \max\limits_{x \in B(0,D)} \lambda_{max}(\nabla^2 \phi(x) + \nabla^2 h(x)),
\end{equation*}
\begin{equation*}
    \lambda^\phi_{\min} = \min\limits_{x \in B(0,D)} \lambda_{\min}(\nabla^2 \phi(x)) \text{ and } \lambda^{\phi}_{max} = \max\limits_{x \in B(0,D)} \lambda_{max}(\nabla^2 \phi(x)).
\end{equation*}

To continue the proof, we define two expressions that are utilized in the analysis.
The first is the minimal eigenvalue of the hessian of $g$ at iteration $t>0$ within $\mathcal{B}[0,D]$,
\begin{equation*}
    \alpha_t = \min\limits_{x \in \mathcal{B}[0,D]} \lambda_{\min}\left( \nabla^2 g^t(x) \right), \qquad \forall t > 0.
\end{equation*}
The second partially acts as a counter for the number of times $\beta$ was updated starting from some iteration $k>J$, 
\begin{equation*}
        \kappa_{\beta_k} = \begin{cases}
			0, & \beta_k \ge \dfrac{5C - \lambda^{h+\phi}_{\min}}{0.75 \sigma},\\
            \left\lceil \log_2 \left(\dfrac{5C - \lambda^{h+\phi}_{\min}}{0.75 \beta_k \sigma} \right) \right\rceil, & \text{otherwise}.
    \end{cases}
\end{equation*}
Obviously, for any $\beta_k$ there are two complementing possibilities: either $\beta$ is updated less than $\kappa_{\beta_k}$ times after the $k$ iteration, or it is updated at least $\kappa_{\beta_k}$ times after iteration $k$.
Correspondingly, we make the following a-priori assumption, to be proven later.
There exists $C>0$ such that:
\begin{enumerate}
    \item It holds that
    \begin{equation}\label{eq:rhs_is_bounded}
        \dfrac{8(\zeta_{t+1} + \xi_{t+1} + \epsilon)}{\beta_t  \tilde{\sigma}_t} < C, \qquad \forall t>J.
    \end{equation}

    \item 
    %Denote 
    %\begin{equation*}
    %    \alpha_t = \min\limits_{x \in \mathcal{B}[0,D]} \lambda_{\min}\left( \nabla^2 g^t(x) \right), \qquad \forall t > 0,
    %\end{equation*}
    %and for $k > J$ let 
    %\begin{equation*}
    %    \kappa_{\beta_k} = \begin{cases}
	%		0, & \beta_k \ge \dfrac{5C - \lambda^{h+\phi}_{\min}}{0.75 \sigma}\\
    %        \left\lceil \log_2 \left(\dfrac{5C - \lambda^{h+\phi}_{\min}}{0.75 \beta_k \sigma} \right) \right\rceil, & \text{otherwise}
    %\end{cases}.
    %\end{equation*}
    If $\beta$ is updated at least $\kappa_{\beta_k}$ times after iteration $k$, in which case let $\tau_{\kappa, \beta_k}$ be the iteration of the $\kappa_{\beta_k}$ update after $k$, then for all $t \ge \tau_{\kappa, \beta_k}$
        \begin{equation}\label{eq:alpha_is_bounded}
            \alpha_t > 4C.
        \end{equation}
    %Then one of the following holds true:
    %\begin{itemize}
    %    \item $\beta$ is updated less than $\kappa_{\beta_k}$ times after the $k^{th}$ iteration.
    %    \item $\beta$ is updated at least $\kappa_{\beta_k}$ times after iteration $k$, and let $\tau_{\kappa, \beta_k}$ be the iteration of the $\kappa_{\beta_k}$ update after $k$. Then, for all $t \ge \tau_{\kappa, \beta_k}$
    %    \begin{equation}\label{eq:alpha_is_bounded}
    %        \alpha_t > 4C.
    %    \end{equation}
    %\end{itemize}
\end{enumerate}
We will show that the number of $\beta$ updates is finite assuming \eqref{eq:rhs_is_bounded} and \eqref{eq:alpha_is_bounded}, and then show that these assumptions indeed hold true.
If $\beta$ is updated less than $\kappa_{\beta_k}$ many times after iteration $k$, the first part of \cref{adaptive_penalty_oracle_requirements_defin} trivially holds true. 
Otherwise, by \eqref{eq:alpha_is_bounded}, there exists $\tau_{\kappa, \beta_k}$, such that for all $t>\tau_{\kappa, \beta_k}$ and $x\in\mathcal{B}[0,D]$, $\nabla^2 g^t(x)$ is a positive definite matrix, meaning that $g^t(x)$ is $\alpha_t$-strongly convex within $\mathcal{B}[0,D]$. Since $x_t, x_{t+1} \in \mathcal{B}[0,D]$ for all $t$, by the strong convexity inequality 
\begin{equation*}
    g^{t+1}(x_{t+1}) - g^{t+1}(x_t) \le \nabla g^{t+1}(x_{t+1})^T \left(x_{t+1}-x_t\right) - \dfrac{\alpha_{t+1}}{2}\|x_{t+1}-x_t\|^2 \qquad \forall t > \tau_{\kappa, \beta_k}.
\end{equation*}
From the update rule of $x_{t+1}$, which defined as the minimizer of $g^{t+1}(x)$, we have that $\nabla g^{t+1}(x_{t+1}) = 0$, and hence,   %todo use the fact that gradient = 0 
\begin{equation*}
    g^{t+1}(x_{t+1}) - g^{t+1}(x_t) \le -\dfrac{\alpha_{t+1}}{2}\|x_{t+1}-x_t\|^2 \qquad \forall t > \tau_{\kappa, \beta_k}.
\end{equation*}
By the update of $\rho_{t+1}$ in line 11 of \cref{general_beta_oracle},
\begin{equation*}
    \rho_{t+1} = -\dfrac{2\left(g^{t+1}(x_{t+1}) - g^{t+1}(x_t)\right)}{\|x_{t+1}-x_t\|^2} \ge \alpha_{t+1}.
\end{equation*}
Thus,   if $\alpha_{t} > 4C$ for all $t\ge \tau$, then $\rho_t > 4C$, and the condition $ \rho_t \beta_t  \tilde{\sigma}_t > 32(\zeta_{t+1} + \xi_{t+1} + \epsilon)$
%\begin{equation*}
%    \dfrac{1}{4} \rho_t > \dfrac{8(\zeta_{t+1} + \xi_{t+1} + \epsilon)}{\beta_t  \tilde{\sigma}_t}
%\end{equation*}
is no longer violated. Therefore, the first part of the requirements in \cref{adaptive_penalty_oracle_requirements_defin} is fulfilled.  
That is, under the a-priori assumption above, we established the stability of $\beta$ meaning that it is updated finitely many times.

We now move to prove that the a-priori assumptions indeed hold true, starting with \eqref{eq:alpha_is_bounded}, assuming that the first a-priori assumption \eqref{eq:rhs_is_bounded} holds true.

Set $\bar{Q}^{t+1}:= \left( \bar{M}^{t+1} \right)^T \bar{M}^{t+1}$. 
First, we bound $\lambda_{\min}\left(\nabla^2 g^{t+1}(x)\right)$ by the minimal eigenvalues of its components,
\begin{equation} \label{eq:2024}
    \lambda_{\min} \left(\nabla^2 g^{t+1}(x) \right) \ge \lambda_{\min} \left( \nabla^2 h(x) + \nabla^2 \phi(x) \right) + \beta_t \lambda_{\min} \left( \bar{Q}^{t+1} \right), % \left( \left( \bar{M}^{t+1} \right)^T \bar{M}^{t+1} \right),
\end{equation}
where we used the fact that the hessian of $g^{t+1}$ is given by $ \nabla^2 g^{t+1}(x) = \nabla^2 h(x) + \nabla^2 \phi(x) + \beta_t \bar{Q}^{t+1}. $
%\begin{equation*}
%    \nabla^2 g^{t+1}(x) = \nabla^2 h(x) + \nabla^2 \phi(x) + \beta_t \left( \bar{M}^{t+1} \right)^T \bar{M}^{t+1}.
%\end{equation*}
For $\lambda_{\min}\left(\nabla^2 h(x) + \nabla^2 \phi(x)\right)$ in \eqref{eq:2024} we already have the lower bound $\lambda^{h + \phi}_{\min}$.

To lower bound $\lambda_{\min} \left( \bar{Q}^{t+1} \right)$, we use \Cref{lem:3}, which states that for every $ 0 < \epsilon' < 1$ there exists almost surely $K > 0$ such that for all $k>K$
\begin{equation*}
    (1-\epsilon')\lambda_{\min}(\mathbb{E}[M]^T \mathbb{E}[M]) = (1-\epsilon')\sigma \le \tilde{\sigma}_k \le (1+\epsilon')\sigma = (1+\epsilon')\lambda_{\min}(\mathbb{E}[M]^T \mathbb{E}[M]).
\end{equation*}
We arbitrarily choose $\epsilon' = 0.25$, and derive that for any $t > K$, $ \lambda_{\min} \left( \bar{Q}^{t+1} \right) > 0.75 \sigma $.
%\begin{equation*}
%    \lambda_{\min} \left( \left( \bar{M}^{t+1} \right)^T \bar{M}^{t+1} \right) > 0.75 \sigma \qquad \forall t > K.
%\end{equation*}
Combining the two lower bounds, we have that  $ \lambda_{\min} \left(\nabla^2 g^{t+1}(x) \right) \ge \lambda^{h+\phi}_{\min} + 0.75 \sigma \beta_t$ for all $t > K$.
%\begin{equation*}
%    \lambda_{\min} \left(\nabla^2 g^{t+1}(x) \right) \ge \lambda^{h+\phi}_{\min} + 0.75 \sigma \beta_t.
%\end{equation*}
Consequently, if $ 0.75 \sigma \beta_t \ge 5C - \lambda^{h+\phi}_{\min}$,
%\begin{equation*}
%    \beta_t \ge \dfrac{5C - \lambda^{h+\phi}_{\min}}{0.75 \sigma},
%\end{equation*}
then $\alpha_{t+1} \ge 5C > 4C$, and \eqref{eq:alpha_is_bounded} holds true with $\kappa_{\beta_t} = 0$; note that the choice of the scalar $5$ is arbitrary, any scalar strictly greater than $4$ would suffice. 

Otherwise, $ 0.75 \sigma \beta_t < 5C - \lambda^{h+\phi}_{\min}$ and
\begin{equation*}
    \kappa_{\beta_t} = \left\lceil \log_2 \left(\dfrac{5C - \lambda^{h+\phi}_{\min}}{0.75 \beta_t \sigma} \right) \right\rceil.
\end{equation*}

% If $\beta$ is updated less than $\kappa_{\beta_t}$ times starting from iteration 
% $t$, then our second a priori assumption trivially holds true, regardless of the veracity of \eqref{eq:alpha_is_bounded}. \nh{I dont understand this sentence} \dg{It's a bit unclear, because I was referring to the second a priori assumption, rather than to \eqref{eq:alpha_is_bounded} itself.}
If $\beta$ is updated at least $\kappa_{\beta_t}$ times after iteration $t$ (otherwise $\beta$ is updated a finite number of times), let $\tau_{\kappa, \beta_t}$ be the iteration of the $\kappa_{\beta_t}$ update  of $\beta$ starting from iteration $t$. 
By the choices of $\kappa_{\beta_t}$ and $\tau_{\kappa,\beta_t}$ and the update rule for $\beta$ (see \Cref{rem:1902}), $\beta_{\tau_{\kappa,\beta_t}} = 2^{\kappa_{\beta_t}} \beta_t$,
%\begin{equation*}
%    \beta_{\tau_{\kappa,\beta_t}} = 2^{\kappa_{\beta_t}} \beta_t,
%\end{equation*}
and therefore
\begin{equation*}
\begin{aligned}
    \log_2 \left(\dfrac{5C - \lambda^{h+\phi}_{\min}}{0.75 \beta_{\tau_{\kappa,\beta_t}} \sigma} \right) 
    = \log_2 \left(\dfrac{5C - \lambda^{h+\phi}_{\min}}{0.75 \cdot 2^{\kappa_{\beta_t}} \beta_{t} \sigma} \right) 
    &= \log_2 \left(\dfrac{5C - \lambda^{h+\phi}_{\min}}{0.75 \beta_{t} \sigma} \right) - \log_2 \left( 2^{\kappa_{\beta_t}} \right) \\
    &= \log_2 \left(\dfrac{5C - \lambda^{h+\phi}_{\min}}{0.75 \beta_{t} \sigma} \right) - \kappa_{\beta_t} \le 0,
\end{aligned}
\end{equation*}
where the last inequality follows from the definition of $\kappa_{\beta_t}$.
Hence, it follows that $\beta_{\tau_{\kappa,\beta_t}} \ge \dfrac{5C - \lambda^{h+\phi}_{\min}}{0.75 \sigma},$
%\begin{equation*}
%    \beta_{\tau_{\kappa,\beta_t}} \ge \dfrac{5C - \lambda^{h+\phi}_{\min}}{0.75 \sigma},
%\end{equation*}
and for all $k > \tau_{\kappa,\beta_t}$ and all $x \in \mathcal{B}[0,D]$,
\begin{equation*}
    \lambda_{\min} \left(\nabla^2 g^{k}(x) \right) \ge \lambda^{h+\phi}_{\min} + 0.75 \sigma \beta_{k} \ge 5C > 4C.
\end{equation*}
Thus, assuming \eqref{eq:rhs_is_bounded}, $\alpha_k > 4C  $ for any $k>\tau_{\kappa,\beta_t} $,
%\begin{equation*}
%    \alpha_k > 4C \qquad \forall k>\tau_{\kappa,\beta_t},
%\end{equation*}
proving that \eqref{eq:alpha_is_bounded} holds true.

\medskip

All that remains 
%to prove that \cref{alg_general_case} fulfills the first condition in \cref{adaptive_penalty_oracle_requirements_defin} 
is to prove that \eqref{eq:rhs_is_bounded} holds true, that is, that there exits $C>0$ such that $ \dfrac{8(\zeta_{t+1} + \xi_{t+1} + \epsilon)}{\beta_t  \tilde{\sigma}_t} < C $.
%\begin{equation*}
%    \dfrac{8(\zeta_{t+1} + \xi_{t+1} + \epsilon)}{\beta_t  \tilde{\sigma}_t} < C.
%\end{equation*}
%\nh{I am commenting-out parts that are not necessary and making on the way some cosmetic changes to reduce work on the paper version.}
%To that end, we will provide a lower bound for $\beta_t$ for $\tilde{\sigma}_k$ and an upper bound for $8(\zeta_{t+1} + \xi_{t+1})$. 
To lower bound $\beta_t$, note that the sequence $\{\beta_t\}_{t\ge0}$ is nondecreasing. Therefore, 
\begin{equation}\label{eq:beta_lower_bound}
    \beta_t \ge \beta_0.
\end{equation}

By the choice of $J$, the lower bound for $\tilde{\sigma}_k$ follows from
\begin{equation}\label{tilde_sigma_lower_bound}
    \tilde{\sigma}_k > 0.5 \sigma \qquad \forall k > J.
\end{equation}

%Finally, we bound $\zeta_t$ and $\xi_t$. 
By the update rule of $\zeta$, $\xi$ and the definition of the empirical lipschitz constant (\Cref{empirical_lipschitz_defin}), $\left(L^e_{\nabla h + \nabla \phi}(\{x_t\}_{t\ge0})\right)^2 = \sup\limits_{t\ge 0} \zeta_{t}$  and  $ \left(L^e_{\nabla \phi}(\{x_t\}_{t\ge0})\right)^2 = \sup\limits_{t\ge0} \xi_{t}$.
%\begin{equation*}
%    L^e_{\nabla h + \nabla \phi}(\{x_t\}_{t\ge0}) = \sup\limits_{t\ge 0} \zeta_{t},
%\end{equation*}
%and
%\begin{equation*}
%    L^e_{\nabla \phi}(\{x_t\}_{t\ge0}) = \sup\limits_{t\ge0} \xi_{t}.
%\end{equation*}
Therefore, for all $t$,
\begin{equation}\label{eq:zeta_bounded_by_empirical_lipschitz}
    \zeta_t \le \left(L^e_{\nabla h + \nabla \phi}(\{x_t\}_{t\ge0})\right)^2
\end{equation}
and 
\begin{equation}\label{eq:xi_bounded_by_empirical_lipschitz}
    \xi_t \le \left(L^e_{\nabla \phi}(\{x_t\}_{t\ge0})\right)^2.
\end{equation}

Using \eqref{eq:zeta_bounded_by_empirical_lipschitz} and \eqref{eq:xi_bounded_by_empirical_lipschitz}, it is sufficient to bound $L^e_{\nabla h + \nabla \phi}(\{x_t\}_{t\ge0})$ and $L^e_{\nabla \phi}(\{x_t\}_{t\ge0})$. 
%In the following paragraphs, we introduce the scalars $L^D_{\nabla h + \nabla \phi}$, $L^D_{\nabla \phi}$, show that both are finite, and that they provide an upper bound for the empirical Lipschitz constants. 

The Lipschitz constants of the restrictions of $\nabla h + \nabla \phi$ and $\nabla \phi$ to the ball $\mathcal{B}[0, D]$ are bounded by $L^D_{\nabla \phi} = \max \{|\lambda^\phi_{\min}|, |\lambda^{\phi}_{max}|\}$ and $L^D_{\nabla \phi + \nabla h} = \max \{|\lambda^{h+\phi}_{\min}|, |\lambda^{h+\phi}_{max}|\}$, respectively. 

If there is no $t$ such that $x_t \ne x_{t+1}$, then $L^e_{\nabla h + \nabla \phi}(\{x_t\}_{t\ge0}) = L^e_{\nabla \phi}(\{x_t\}_{t\ge0}) = 0$, and therefore $L^e_{\nabla h + \nabla \phi}(\{x_t\}_{t\ge0}) \le L^D_{\nabla \phi + \nabla h}$ and $L^e_{\nabla \phi}(\{x_t\}_{t\ge0}) \le L^D_{\nabla \phi}$.

Otherwise, since $x_t \in \mathcal{B}(0,D)$ for every $t\ge0$, 
\begin{align*}
    L^e_{\nabla h + \nabla \phi}(\{x_t\}_{t\ge0}) &= \sup\limits_{t \ge 0, x_{t+1} \ne x_t} \dfrac{\|\nabla h(x_{t+1}) + \nabla \phi(x_{t+1}) - \nabla h(x_t) - \nabla \phi(x_t)\|}{\|x_{t+1} - x_t\|} \\&\le \sup_{x,y \in \mathcal{B}(0,D), x \ne y} \dfrac{\|\nabla h(x) + \nabla \phi(x) - \nabla h(y) - \nabla \phi(y)\|}{\|x - y\|} \\
    &= L^D_{\nabla \phi + \nabla h} = \max \{|\lambda^{h+\phi}_{\min}|, |\lambda^{h+\phi}_{max}|\}
    < \infty,
\end{align*}
and in a similar manner, $ L^e_{\nabla \phi}(\{x_t\}_{t\ge0})\leq \max \{|\lambda^{\phi}_{\min}|, |\lambda^{\phi}_{max}|\} < \infty$.
%\begin{align*}
 %   L^e_{\nabla \phi}(\{x_t\}_{t\ge0}) &= \sup\limits_{t \ge 0, x_{t+1} \ne x_t} \dfrac{\|\nabla \phi(x_{t+1}) - \nabla \phi(x_t)\|}{\|x_{t+1} - x_t\|} \\&\le \sup_{x,y \in \mathcal{B}(0,D), x \ne y} \dfrac{\|\nabla \phi(x) - \nabla \phi(y)\|}{\|x - y\|} \\&= L^D_{\nabla \phi} \\&\le \max \{|\lambda^{\phi}_{\min}|, |\lambda^{\phi}_{max}|\}\\& < \infty.
%\end{align*}
We deduce that whether there exists $t$ such that $x_{t+1} \ne x_t$ or not,  $ \zeta_t \le \left(L^e_{\nabla h + \nabla \phi}(\{x_t\}_{t\ge0})\right)^2 \le \left(L^D_{\nabla \phi + \nabla h}\right)^2 < \infty$ and $ \xi_t \le \left(L^e_{\nabla \phi}(\{x_t\}_{t\ge0})\right)^2 \le \left(L^D_{\nabla \phi}\right)^2 < \infty. $
%\begin{equation*}
%    \zeta_t \le L^e_{\nabla h + \nabla \phi}(\{x_t\}_{t\ge0}) \le L^D_{\nabla \phi + \nabla h} < \infty
%\end{equation*}
%and 
%\begin{equation*}
%    \xi_t \le L^e_{\nabla \phi}(\{x_t\}_{t\ge0}) \le L^D_{\nabla \phi} < \infty.
%\end{equation*}
Combining these with \eqref{eq:beta_lower_bound} and \eqref{tilde_sigma_lower_bound}, it follows that \eqref{eq:rhs_is_bounded} holds true with
\begin{equation} \label{eq:2164}
    C = \dfrac{8\left( \left(L^D_{\nabla \phi + \nabla h}\right)^2 + \left(L^D_{\nabla \phi}\right)^2 + \epsilon\right)}{0.5 \sigma \beta_0}.
\end{equation}
To summarize, we have shown that the first a-priori assumption holds true with $C$ given in \eqref{eq:2164}, which implies that the second a-priori assumption holds true with the same $C$, and subsequently, that the number of $\beta$ updates is finite, concluding that the first part of \Cref{adaptive_penalty_oracle_requirements_defin} holds true.

\medskip

Now we establish the second part of \Cref{adaptive_penalty_oracle_requirements_defin}, stating the iterates of $\{x_k\}_{k\geq K}$ satisfy a sufficient decrease property from some iteration $K>0$ onward. 

Denote the index of the last $\beta$ update by $\tilde{K}$ whose existence is guaranteed by the first part of \Cref{adaptive_penalty_oracle_requirements_defin} we just proved. 
To underscore the fact that $\beta$ is no longer updated, we denote $\Bar{\beta} = \beta_{\tilde{K}}$, and replace $\beta_k$ with $\bar{\beta}$ in the remainder of the proof.

Since the if statement in line 12 of \Cref{general_beta_oracle} holds true for all $k>\tilde{K}$, we can rearrange it into the following statement that holds for all $k>\tilde{K}$,
\begin{equation}\label{eq:1104}
    \dfrac{1}{4}\cdot \rho_k \cdot \bar{\beta} \lambda_{\min}((\bar{M}^k)^T \bar{M}^k) > 8 (\zeta_{k+1} + \xi_{t+1} + \epsilon).
\end{equation}
As before, let $K>0$ such that almost surely for all $k>K$:
\begin{equation*}
    (1-\epsilon')\lambda_{\min}(\mathbb{E}[M]^T \mathbb{E}[M]) = (1-\epsilon')\sigma \le \tilde{\sigma}_k \le (1+\epsilon')\sigma = (1+\epsilon')\lambda_{\min}(\mathbb{E}[M]^T \mathbb{E}[M]).
\end{equation*}
Choosing arbitrarily $\epsilon' = 0.25$ from the interval $(0, 1)$ yields that $  \lambda_{\min}((\bar{M}^k)^T \bar{M}^k) < 1.25 \sigma$
%\begin{equation*}
%    \lambda_{\min}((\bar{M}^k)^T \bar{M}^k) < 1.25 \sigma
%\end{equation*}
holds true almost surely.
Thus, combining the latter with \eqref{eq:1104},  we derive that $ 5 \rho_k \bar{\beta} \sigma> 16\cdot 8 (\zeta_{k+1} + \xi_{t+1} + \epsilon)$
%\begin{equation*}
%        \dfrac{5}{16}\cdot \rho_k > \dfrac{8 (\zeta_{k+1} + \xi_{t+1} + \epsilon)}{\bar{\beta} \sigma}
%\end{equation*}
holds almost surely for all $k> \max \{K,\tilde{K}\}$.
Rearranging the terms, it follows that
\begin{equation}\label{eq:1108}
    \dfrac{5}{16}\cdot \rho_k - \dfrac{8 (\zeta_{k+1} + \xi_{t+1} + \epsilon)}{\bar{\beta} \sigma}  > 0.
\end{equation}
Since \eqref{eq:1108} holds true for all $k>\max \{K,\tilde{K}\}$, by taking $\liminf$ we obtain that
\begin{equation*}
    \liminf \dfrac{5}{16}\cdot \rho_k - \dfrac{8 (\zeta_{k+1} + \xi_{t+1} + \epsilon)}{\bar{\beta} \sigma}  \ge 0.
\end{equation*}

By the update rule of $\zeta$ and $\xi$, and the definition of empirical Lipschitz constant, the limits $  \lim\limits_{k\to\infty} \zeta_k = \left(L^e_{\nabla h + \nabla \phi}(\{x_t\}_{t\ge0})\right)^2  $ and $\lim\limits_{k\to\infty} \xi_k = \left(L^e_{\nabla \phi}(\{x_t\}_{t\ge0})\right)^2$
%\begin{equation*}
%   \lim\limits_{k\to\infty} \zeta_k = L^e_{\nabla h + \nabla \phi}(\{x_t\}_{t\ge0}) \text{ and } \lim\limits_{k\to\infty} \xi_k = L^e_{\nabla \phi}(\{x_t\}_{t\ge0})
%\end{equation*}
exist (the sequence is increasing and bounded). %, and 
%\begin{equation*}
%    \lim\limits_{k\to\infty} \zeta_k = L^e_{\nabla h + \nabla \phi}(\{x_t\}_{t\ge0}),
%\end{equation*}
%\begin{equation*}
%    \lim\limits_{k\to\infty} \xi_k = L^e_{\nabla \phi}(\{x_t\}_{t\ge0}).
%\end{equation*}

Utilizing the facts that $\lim\limits_{k\to\infty} \zeta_k = \left(L^e_{\nabla h + \nabla \phi}(\{x_t\}_{t\ge0})\right)^2$ and  $\lim\limits_{k\to\infty} \xi_k = \left(L^e_{\nabla \phi}(\{x_t\}_{t\ge0})\right)^2$, we then {deduce the relation}
\begin{equation}\label{eq:rho_liminf_general_case}
    \liminf \dfrac{5}{16}\cdot \rho_k \ge \dfrac{8 \left(\left(L^e_{\nabla h + \nabla \phi}(\{x_t\}_{t\ge0})\right)^2 +  \left(L^e_{\nabla \phi}(\{x_t\}_{t\ge0})\right)^2 + \epsilon \right)}{\bar{\beta} \sigma}.
\end{equation}
Decreasing the right hand side by $\dfrac{\epsilon}{\bar{\beta}\sigma} > 0$ and multiplying the left hand side by $\dfrac{16}{5}$, results in
\begin{equation}
\label{eq:2217}
    \liminf\limits_{k\to\infty} \rho_k > \dfrac{8 \left(\left(L^e_{\nabla h + \nabla \phi}(\{x_t\}_{t\ge0})\right)^2 +  \left(L^e_{\nabla \phi}(\{x_t\}_{t\ge0})\right)^2 \right)}{\bar{\beta} \sigma}.
\end{equation}

Denote $\rho_{inf} = \liminf\limits_{k\to\infty} \rho_k.$
%\begin{equation*}
%    \rho_{inf} = \liminf\limits_{k\to\infty} \rho_k.
%\end{equation*}
By the definition of the limit, for every $\epsilon'' > 0$, there exists $\bar{K} > 0$, such that for all $k>\bar{K}$, it holds that $\rho_k > (1-\epsilon'') \rho_{inf}.$
%\begin{equation*}
%    \rho_k > (1-\epsilon'') \rho_{inf}.
%\end{equation*}
Additionally, due to the strict inequality in \eqref{eq:2217}, for a sufficiently small $\epsilon'' > 0$, it holds that
\begin{equation*}
    (1-\epsilon'')\rho_{inf} > \dfrac{8 \left(\left(L^e_{\nabla h + \nabla \phi}(\{x_t\}_{t\ge0})\right)^2 +  \left(L^e_{\nabla \phi}(\{x_t\}_{t\ge0})\right)^2 \right)}{\bar{\beta} \sigma}.
\end{equation*}
Choosing such $\epsilon''$, for every $k>K_{stable}=\max \{K,\tilde{K}, \bar{K} \}$, it holds almost surely that 
\begin{equation*}
    \rho_k > (1-\epsilon'')\rho_{inf} > \dfrac{8 \left(\left(L^e_{\nabla h + \nabla \phi}(\{x_t\}_{t\ge0})\right)^2 +  \left(L^e_{\nabla \phi}(\{x_t\}_{t\ge0})\right)^2 \right)}{\bar{\beta} \sigma}.
\end{equation*}
Hence, the second part of the theorem holds for $\rho = (1 - \epsilon'')\rho_{inf},$
%\begin{equation*}
%    \rho = (1 - \epsilon'')\rho_{inf},
%\end{equation*}
which concludes the proof of the theorem.
%\Halmos 
\end{proof}

\section{Meta Algorithm Convergence Proofs}\label{sec:Appendix_A}
\begin{proof}[{\cref{thm:3}}]
Before initiating the proof process, let us recall a few definitions and facts.
By \Cref{adaptive_penalty_oracle_requirements_defin}, we have the following guarantees on the sequence $\{\beta_t\}_{t\ge0}$:  There exists an index $K_{stable}$ such that: (i) $\beta$ stability: For all $k>K_{stable}$, $\beta_k = \beta_{K_{stable}}$, which for convenience we denote  by $\bar{\beta} = \beta_{K_{stable}}$; and (ii) Sufficient decrease: There exists $\rho > 0$ such that for all $k>K_{stable}$ 
    \begin{align*} 
        &g^{k+1}(x_{k+1}) - g^{k+1}(x_k) \le -\dfrac{\rho}{2}\|x_{k+1}-x_k\|^2 \text{ and } \\&\rho > \dfrac{8}{\beta_k \sigma} \left(\left(L^e_{\nabla h + \nabla \phi}(\{x_t\}_{t\ge0})\right)^2 + \left(L^e_{\nabla \phi}(\{x_t\}_{t\ge0})\right)^2 \right),
    \end{align*}

    where $L^e_{\nabla h + \nabla \phi}(\{x_t\}_{t\ge0})$ and $L^e_{\nabla \phi}(\{x_t\}_{t\ge0})$ are the empirical Lipschitz constants defined in \Cref{empirical_lipschitz_defin}.
%\begin{enumerate}
%    \item $\beta$ stability: For all $k>K_{stable}$, $\beta_k = \beta_{K_{stable}}$; for convenience we denote it by $\bar{\beta} = \beta_{K_{stable}}$. 
% 
%    \item Sufficient decrease: There exists $\rho > 0$ such that for all $k>K_{stable}$ 
%    \begin{equation*} %todo compress
%        g^{k+1}(x_{k+1}) - g^{k+1}(x_k) \le -\dfrac{\rho}{2}\|x_{k+1}-x_k\|^2,
%    \end{equation*}
%    and
%    \begin{equation*}
%        \rho > \dfrac{8}{\beta_k \sigma} \left(L^e_{\nabla h + \nabla \phi}(\{x_t\}_{t\ge0}) + L^e_{\nabla \phi}(\{x_t\}_{t\ge0}) \right),
%    \end{equation*}
%    
%    where $L^e_{\nabla h + \nabla \phi}(\{x_t\}_{t\ge0})$ and $L^e_{\nabla \phi}(\{x_t\}_{t\ge0})$ are the empirical Lipschitz constants defined in \Cref{empirical_lipschitz_defin}.
%\end{enumerate}

Our analysis will focus on the iterations executed after reaching stability, i.e., $t \ge K_{stable}$. 
To underscore the fact that $\{\beta_t\}_{t\geq 0}$ is constant we will replace $\beta_t$ with $\bar{\beta}$ throughout this proof.
For the sake of ease of reading, we will also use the notation 
\begin{equation}\label{eq:empirical_lipschitz_constant_h_plus_phi}
    \mu := L^e_{\nabla h + \nabla \phi}(\{x_t\}_{t\ge0}),
\end{equation}
and 
\begin{equation}\label{eq:empirical_lipschitz_phi}
    \nu := L^e_{\nabla \phi}(\{x_t\}_{t\ge0}).
\end{equation}

We begin the proof with outlining its main steps.
The proof consists of 4 milestones:
\begin{enumerate}\label{theorem_milestones}
    \item Bounding $\|z_{t+1}-z_t\|^2$ via the relation
    \begin{align}\label{z_delta_bound}
        \|z_{t+1} &-z_t\|^2 \\ \le& \dfrac{4 \mu^2}{\sigma} \|x_{t+1} - x_{t}\|^2 + \dfrac{4 \nu^2}{\sigma} \|x_{t} - x_{t-1}\|^2 + \dfrac{2}{\sigma}(\|\delta_{t+1}^T z_{t+1}\| + \|\delta_t^T z_t\|)^2. \nonumber
    \end{align}

    \item Showing that 
    \begin{equation}\label{x_delta_convergence_begets_all_delta_convergence}
        \lim\limits_{t\to\infty} \|x_{t+1}-x_t\|^2 = 0 \; \Rightarrow \; \lim\limits_{t\to\infty} \|x_{t+1}-x_t\|^2 + \|y_{t+1}-y_t\|^2 + \|z_{t+1}-z_t\|^2 = 0.
    \end{equation}

    \item Showing that there exists a sequence of scalar random variables $\{d_t\}_{t \ge 0}$ such that 
    \begin{equation}\label{d_series_sum_converges}
        \sum\limits_{t=0}^\infty d_{t+1} < \infty
    \end{equation}
    almost surely, satisfying that
    \begin{align}\label{AL_delta_bound}
    &\mathcal{L}_{\bar{\beta}}(x_{t+1},y_{t+1},z_{t+1};\bar{M}^{t+1}) - \mathcal{L}_{\bar{\beta}}(x_t,y_t,z_t;\bar{M}^{t}) \\&\le \dfrac{4}{\sigma \bar{\beta}} \left(\mu^2\|x_{t+1} - x_{t}\|^2 + \nu^2\|x_{t} - x_{t-1}\|^2 \right) - \dfrac{\rho}{2}\|x_{t+1}-x_t\|^2 + d_{t+1}. \nonumber
    \end{align}

    \item Showing that there exist random variables $C_1, C_2$, such that $C_1 < 0$, $C_2 < \infty$ almost surely, and 
    \begin{equation}\label{AL_bound}
        \mathcal{L}_{\beta_{t+1}}(x_{t+1}, y_{t+1},z_{t+1}; \bar{M}^{t+1}) \le \sum\limits_{i=K_{stable}+1}^{t-1} C_1 \|x_{i+1}-x_i\|^2 + \sum\limits_{i=K_{stable}+1}^t d_{t+1} + C_2.
    \end{equation}
\end{enumerate}

Assuming that \eqref{x_delta_convergence_begets_all_delta_convergence}, \eqref{d_series_sum_converges} and \eqref{AL_bound} hold true, the proof of the theorem is quite straightforward, and therefore, we start from the end under the premise of these four milestones. 

\noindent \textbf{Main proof assuming the four milestones.}\\
Consider an accumulation point $(x^*, y^*, z^*)$ of $\{x_t, y_t, z_t\}_{t>0}$, and its convergent subsequence $\{x_{t_i}, y_{t_i}, z_{t_i}\}_{i\ge0}$. We note the following facts: (i) $\mathcal{L}_{\bar{\beta}}(\cdot, \cdot, \cdot; A)$ is continuous in its last argument; (ii) $\bar{M}^{t}\xrightarrow{t\to\infty} \mathbb{E}[M]$; (iii) $\mathcal{L}_{\bar{\beta}}$ is lower semicontinuous; (iv) $P$ is proper, and therefore $\mathcal{L}_{\bar{\beta}}$ is proper.
%\begin{enumerate}
%    \item $\mathcal{L}_{\bar{\beta}}(\cdot, \cdot, \cdot; A)$ is continuous in its last argument.
%    
%    \item $\bar{M}^{t}\xrightarrow{t\to\infty} \mathbb{E}[M]$.
%    
%    \item $\mathcal{L}_{\bar{\beta}}$ is lower semicontinuous.
%
%    \item $P$ is proper, and therefore $\mathcal{L}_{\bar{\beta}}$ is proper.
%\end{enumerate}
Using these facts, and the notation $\mathcal{L}_{\bar{\beta}}(\cdot,\cdot,\cdot,\mathbb{E}[M])=\mathcal{L}_{\bar{\beta}}$, the following inequalities hold true with respect to the accumulation point $(x^*, y^*, z^*)$ and the convergent subsequence $\{x_{t_i}, y_{t_i}, z_{t_i}\}_{i\ge0}$,
\begin{equation}\label{eq:1020}
    \liminf_{i\to\infty} \mathcal{L}_{\bar{\beta}}(x_{t_i}, y_{t_i}, z_{t_i}; \bar{M^{t_i}}) \ge \mathcal{L}_{\bar{\beta}}(x^*, y^*, z^*) > -\infty.
\end{equation}

Combining \eqref{eq:1020} with \eqref{AL_bound}, we deduce that there exists a sequence of random variables $\{d_t\}_{t \ge 0}$ such that
\begin{equation*}
    \sum\limits_{i=K_{stable}+1}^{\infty} C_1  \|x_{i+1}-x_i\|^2 + \sum\limits_{i=K_{stable}+1}^{\infty} d_{t+1} + C_2 > -\infty,
\end{equation*}
which implies,
by \eqref{d_series_sum_converges} 
%stating that
%\begin{equation*}
%    \sum\limits_{i=K_{stable}+1}^{\infty} d_{t+1} < \infty \quad \text{ almost surely,}
%\end{equation*}
that
\begin{equation*}
    \sum\limits_{i=K_{stable}+1}^{\infty} C_1  \|x_{i+1}-x_i\|^2 + C_2 > -\infty \quad \text{ almost surely}.
\end{equation*}
Since $C_1 < 0$ and $C_2 < \infty$, it follows that $\lim\limits_{t\to\infty} \|x_{t+1}-x_t\|^2 = 0 $ almost surely.
%\begin{equation*}
%    \lim\limits_{t\to\infty} \|x_{t+1}-x_t\|^2 = 0 \quad \text{ almost surely}.
%\end{equation*}
Finally, by \eqref{x_delta_convergence_begets_all_delta_convergence}, it follows that 
\begin{equation*}
    \lim\limits_{t\to\infty} \|x_{t+1}-x_t\|^2 + \|y_{t+1}-y_t\|^2 + \|z_{t+1}-z_t\|^2 = 0,
\end{equation*}
which readily implies the required by invoking \Cref{thm:2}.
\medskip

The remainder of the proof shall focus on proving that \eqref{x_delta_convergence_begets_all_delta_convergence}, \eqref{d_series_sum_converges} and \eqref{AL_bound} hold true; we will prove all the milestones mentioned above in order of appearance.
% todo STOP5

\noindent \textbf{Milestone 1: Proving Relation \eqref{z_delta_bound}.}\\
Let $t>K_{stable}$. 
We note that due to \Cref{corollary:1}, $\mathbb{E}[M]^T \mathbb{E}[M] - \sigma I \succeq 0$, and in particular, for any $t\geq 1$
\begin{equation*}
    \langle z_{t+1} - z_t, (\mathbb{E}[M]^T \mathbb{E}[M] - \sigma I) z_{t+1} - z_t \rangle \ge 0,
\end{equation*}
 which is the same as
\begin{equation*}
    \sigma \|z_{t+1}-z_t\|^2 \le \|\mathbb{E}[M]^T(z_{t+1}-z_t)\|^2.
\end{equation*}
Hence, by adding the same elements on both sides of the relation, we obtain
\begin{equation}\label{eq:6}
    \sqrt{\sigma} \|z_{t+1}-z_t\| - \|\delta_{t+1}^T z_{t+1}\| - \|\delta_t^T z_t\| \le \|\mathbb{E}[M]^T(z_{t+1}-z_t)\| - \|\delta_{t+1}^T z_{t+1}\| - \|\delta_t^T z_t\|.
\end{equation}
By the triangle inequality $\|a+b\| \ge \|a\| - \|b\|$, and the definition of $\delta_t$,
\begin{align}\label{eq:5}
    \|\mathbb{E}[M]^T(z_{t+1}-z_t)\| &- \|\delta_{t+1}^T z_{t+1}\| - \|\delta_t^T z_t\| \nonumber\\ \le& \|\mathbb{E}[M]^T(z_{t+1}-z_t) + \delta_{t+1}^T z_{t+1} + \delta_t^T z_t\| \\ =& \|(\bar{M}^{t+1})^T z_{t+1} - (\bar{M}^t)^T z_t \| \nonumber.
\end{align}
Note that by the second part of \Cref{lem:4}, 
\begin{equation*}
    (\bar{M}^{t+1})^T z_{t+1} = \nabla h(x_{t+1}) + \nabla \phi (x_{t+1}) - \nabla \phi (x_t).
\end{equation*} 
Consequently,
\begin{align*}
    &\|(\bar{M}^{t+1})^T z_{t+1} - (\bar{M}^t)^T z_t \|^2 \\&=  \|\nabla h(x_{t+1}) - \nabla h(x_{t}) + (\nabla \phi (x_{t+1}) - \nabla \phi (x_t)) - (\nabla \phi (x_{t}) - \nabla \phi (x_{t-1}))\|^2.
\end{align*}
Using the inequality $\|a+b\|^2 \le 2\|a\|^2 + 2\|b\|^2$,
\begin{align}\label{eq:1}
    &\|(\bar{M}^{t+1})^T z_{t+1} - (\bar{M}^t)^T z_t \|^2 \\&\le 2\|\nabla h(x_{t+1}) - \nabla h(x_{t}) + (\nabla \phi (x_{t+1}) - \nabla \phi (x_t))\|^2 + 2\|\nabla \phi (x_{t}) - \nabla \phi (x_{t-1})\|^2. \nonumber
\end{align}

Recall that by \eqref{eq:empirical_lipschitz_constant_h_plus_phi} and \eqref{eq:empirical_lipschitz_phi}, $\mu = L^e_{\nabla h + \nabla \phi}(\{x_t\}_{t\ge0})$ and $\nu = L^e_{\nabla \phi}(\{x_t\}_{t\ge0})$. By the definition of the empirical Lipschitz constant in \Cref{empirical_lipschitz_defin}, we have that
\begin{equation}\label{eq:3}
    \|\nabla \phi (x_{t}) - \nabla \phi (x_{t-1})\|^2 \le \nu^2\|x_{t} - x_{t-1}\|^2
\end{equation}
and 
\begin{equation}\label{eq:4}
    \|\nabla h(x_{t+1}) - \nabla h(x_{t}) + (\nabla \phi (x_{t+1}) - \nabla \phi (x_t))\|^2 \le \mu^2 \|x_{t+1} - x_{t}\|^2.
\end{equation}

Applying \eqref{eq:3} and \eqref{eq:4} to \eqref{eq:1}:
\begin{equation}\label{eq:7}
    \|(\bar{M}^{t+1})^T z_{t+1} - (\bar{M}^t)^T z_t \|^2 \le 2 \mu^2 \|x_{t+1} - x_{t}\|^2 + 2 \nu^2 \|x_{t} - x_{t-1}\|^2.
\end{equation}

Using inequalities \eqref{eq:6}, \eqref{eq:5}, establishes that
\begin{equation}\label{eq:23}
    \sigma\|z_{t+1}-z_t\|^2 \le \left(\|(\bar{M}^{t+1})^T z_{t+1} - (\bar{M}^t)^T z_t \| + \|\delta_{t+1}^T z_{t+1}\| + \|\delta_t^T z_t\| \right)^2.
\end{equation}
Using \eqref{eq:23}, \eqref{eq:7} and the inequality $(a+b)^2 \le 2a^2 + 2b^2$, 
\begin{equation}\label{eq:10}
    \sigma\|z_{t+1}-z_t\|^2  \le 4\mu^2\|x_{t+1} - x_{t}\|^2 + 4\nu^2\|x_{t} - x_{t-1}\|^2 + 2(\|\delta_{t+1}^T z_{t+1}\| + \|\delta_t^T z_t\|)^2
\end{equation}
This concludes the proof of the first milestone  \eqref{z_delta_bound}.

\medskip

\noindent \textbf{Milestone 2: Proving the implication \eqref{x_delta_convergence_begets_all_delta_convergence}.}\\
First note that by the third part of \cref{lem:4}, $\bar{M}^{t+1} x_{t+1} - y_{t+1} = \dfrac{1}{\bar{\beta}} (z_t - z_{t+1})$, and therefore 
\begin{equation}\label{eq:24}
    y_{t+1}-y_t = \bar{M}^{t+1} x_{t+1} - \bar{M}^{t} x_{t} + \dfrac{1}{\bar{\beta}} (z_{t+1} - z_{t}) - \dfrac{1}{\bar{\beta}} (z_{t} - z_{t-1}).
\end{equation}
Since $\bar{M}^{t+1} x_{t+1} = (\mathbb{E}[M] + \delta_{t+1})x_{t+1}$ and $\bar{M}^{t} x_{t} = (\mathbb{E}[M] + \delta_{t})x_{t}$ (from the definition of $\delta_t$), by the triangle inequality
\begin{equation}\label{eq:1060}
    \| \mathbb{E}[M](x_{t+1} - x_t)\| + \|\delta_{t+1} x_{t+1}\| + \|\delta_t x_t \| \ge \|\bar{M}^{t+1} x_{t+1} - \bar{M}^t x_t \|.
\end{equation}
Plugging \eqref{eq:1060} to  \eqref{eq:24} we obtain that 
\begin{equation*}
    \|y_{t+1}-y_t\| \le \|\mathbb{E}[M]( x_{t+1} - x_{t})\| + \|\delta_{t+1}^T x_{t+1}\| + \|\delta_t ^T x_t\| +  \dfrac{1}{\bar{\beta}} \|z_{t+1} - z_{t}\| + \dfrac{1}{\bar{\beta}} \|z_{t} - z_{t-1}\|.
\end{equation*}

Since $\{x_t, y_t, z_t\}_{t > 0}$ is bounded (cf. \Cref{assum:2}) and $\delta_t \xrightarrow{t\to\infty}0$,  from the relation in \eqref{eq:10} it follows that it is sufficient to show that $\lim_{t\rightarrow\infty} \|x_{t+1}-x_t\|^2 = 0$ in order to derive that $\lim_{t\rightarrow\infty} \|x_{t+1}-x_t\|^2 + \|y_{t+1}-y_t\|^2 + \|z_{t+1}-z_t\|^2 = 0$, which concludes the implication \eqref{x_delta_convergence_begets_all_delta_convergence} stated by the second milestone.

\medskip

\noindent \textbf{Milestone 3: Proving the existence of a sequence of random variables  satisfying \eqref{d_series_sum_converges} and \eqref{AL_delta_bound}.}\\
Our goal is to bound $\mathcal{L}_{\bar{\beta}}(x_{t+1},y_{t+1},z_{t+1}; \bar{M}^{t+1}) - \mathcal{L}_{\bar{\beta}}(x_t,y_t,z_t; \bar{M}^t)$. 
Using the telescoping sum identity,
\begin{align*}
    \mathcal{L}_{\bar{\beta}}(x_{t+1},y_{t+1},z_{t+1}; \bar{M}^{t+1}) &- \mathcal{L}_{\bar{\beta}}(x_t,y_t,z_t; \bar{M}^t) \\ 
    =& \mathcal{L}_{\bar{\beta}}(x_{t+1},y_{t+1},z_{t+1}; \bar{M}^{t+1}) - \mathcal{L}_{\bar{\beta}}(x_{t+1},y_{t+1},z_t; \bar{M}^{t+1}) \\ 
    &+ \mathcal{L}_{\bar{\beta}}(x_{t+1},y_{t+1},z_t; \bar{M}^{t+1}) - \mathcal{L}_{\bar{\beta}}(x_t,y_{t+1},z_t; \bar{M}^{t+1}) \\
    &+ \mathcal{L}_{\bar{\beta}}(x_t,y_{t+1},z_t; \bar{M}^{t+1}) - \mathcal{L}_{\bar{\beta}}(x_t,y_t,z_t; \bar{M}^{t+1}) \\
    &+ \mathcal{L}_{\bar{\beta}}(x_t,y_t,z_t; \bar{M}^{t+1}) - \mathcal{L}_{\bar{\beta}}(x_t,y_t,z_t; \bar{M}^t),
\end{align*}
we will bound each consecutive pair separately to derive a bound on the sum itself.
We begin with the bound for
\begin{equation*}
    \mathcal{L}_{\bar{\beta}}(x_{t+1},y_{t+1},z_{t+1};\bar{M}^{t+1}) - \mathcal{L}_{\bar{\beta}}(x_{t+1},y_{t+1},z_t;\bar{M}^{t+1}).
\end{equation*}
Note that
\begin{align}\label{eq:1070}
    \mathcal{L}_{\bar{\beta}}(x_{t+1}&,y_{t+1},z_{t+1};\bar{M}^{t+1}) - \mathcal{L}_{\bar{\beta}}(x_{t+1},y_{t+1},z_t;\bar{M}^{t+1}) \nonumber \\ =&
    h(x_{t+1}) + P(y_{t+1}) - \langle z_{t+1}, \bar{M}^{t+1}x_{t+1} \rangle + \langle z_{t+1}, y_{t+1} \rangle + \dfrac{\bar{\beta}}{2}\|\bar{M}^{t+1}x_{t+1} - y_{t+1}\|^2 \nonumber\\ -& (h(x_{t+1}) + P(y_{t+1}) - \langle z_t, \bar{M}^{t+1}x_{t+1} \rangle + \langle z_t, y_{t+1} \rangle + \dfrac{\bar{\beta}}{2}\|\bar{M}^{t+1}x_{t+1} - y_{t+1}\|^2) \nonumber \\ =&
    - \langle z_{t+1} - z_t, \bar{M}^{t+1}x_{t+1} - y_{t+1} \rangle.
\end{align}
Applying the third part of \Cref{lem:4}, $z_t - z_{t+1} = \dfrac{\bar{M}^{t+1}x_{t+1} - y_{t+1}}{\beta_t}$,
to \eqref{eq:1070} then yields
\begin{align*}
    \mathcal{L}_{\bar{\beta}}(x_{t+1},y_{t+1},z_{t+1};\bar{M}^{t+1}) - \mathcal{L}_{\bar{\beta}}(x_{t+1},y_{t+1},z_t;\bar{M}^{t+1})  =& 
    - \langle z_{t+1} - z_t, \bar{M}^{t+1}x_{t+1} - y_{t+1} \rangle \\ =& \dfrac{\|z_{t+1}-z_t\|^2}{\bar{\beta}}.
\end{align*}
Using \eqref{eq:10} we thus obtain that
\begin{align}\label{eq:44}
    \mathcal{L}_{\bar{\beta}} &(x_{t+1},y_{t+1},z_{t+1};\bar{M}^{t+1}) - \mathcal{L}_{\bar{\beta}}(x_{t+1},y_{t+1},z_t;\bar{M}^{t+1}) \\& \le \dfrac{4}{\sigma \bar{\beta}}(\mu^2\|x_{t+1} - x_{t}\|^2 + \nu^2\|x_{t} - x_{t-1}\|^2) + \dfrac{2}{\sigma \bar{\beta}}(\|\delta_{t+1}^T z_{t+1}\| + \|\delta_t^T z_t\|)^2, \nonumber
\end{align}
where we used the abbreviations $\mu$ and $\nu$ for the empirical Lipschitz constants defined in \eqref{eq:empirical_lipschitz_constant_h_plus_phi} and \eqref{eq:empirical_lipschitz_phi} respectively.

To evaluate $\mathcal{L}_{\bar{\beta}}(x_{t+1},y_{t+1},z_t;\bar{M}^{t+1}) - \mathcal{L}_{\bar{\beta}}(x_t,y_{t+1},z_t;\bar{M}^{t+1})$, we recall our choice of $g^{t+1}(x)$ first introduced in \eqref{g_function_defin} 
\begin{align*}
    g^{t+1}(x) = h(x) - \langle z_t, \bar{M}^t x \rangle + \dfrac{\bar{\beta}}{2}\| \bar{M}^{t+1}x - y_{t+1}\|^2 + D_{\phi}(x,x_t).
\end{align*}
The function $g^{t+1}(x)$ contains all the components of $\mathcal{L}_{\bar{\beta}}(x,y_{t+1},z_t;\bar{M}^{t+1}) + D_\phi(x,x_t)$ that depend on $x$, so that
\begin{align*}
    \mathcal{L}_{\bar{\beta}}(x_{t+1},y_{t+1},z_t;\bar{M}^{t+1}) &+ D_\phi(x_{t+1},x_t) - \mathcal{L}_{\bar{\beta}}(x_t,y_{t+1},z_t;\bar{M}^{t+1}) - D_\phi(x_t, x_t) \\
    &= g^{t+1}(x_{t+1}) - g^{t+1}(x_t).
\end{align*}
Rearranging,
\begin{align*}
    \mathcal{L}_{\bar{\beta}}(x_{t+1},y_{t+1},z_t;\bar{M}^{t+1}) &- \mathcal{L}_{\bar{\beta}}(x_t,y_{t+1},z_t;\bar{M}^{t+1})\\
    &= g^{t+1}(x_{t+1}) - D_\phi(x_{t+1},x_t) - g^{t+1}(x_t) +  D_\phi(x_t, x_t).
\end{align*}
We will bound each of the components of the right hand side separately.

First, by \cref{adaptive_penalty_oracle_requirements_defin}, for every $t > K_{stable}$ it holds that
\begin{equation*}
    g^{t+1}(x_{t+1}) - g^{t+1}(x_t) \le -\dfrac{\rho}{2}\|x_{t+1}-x_t\|^2.
\end{equation*}
Then, from the definition of $D_{\phi}(\cdot,\cdot)$ we have that
\begin{equation*}
    D_\phi(x,x_t) = \phi(x) - \phi(x_t) - \langle \nabla \phi(x_t), x-x_t\rangle,
\end{equation*}
which clearly implies that
\begin{equation*}
    D_\phi(x_t,x_t) = \phi(x_t) - \phi(x_t) - \langle \nabla \phi(x_t), x_t-x_t\rangle = 0.
\end{equation*}
At last, since $\phi$ is convex, we can apply the gradient inequality to $\phi$ and derive that
\begin{equation*}
    D_\phi(x_{t+1}, x_t) \ge 0.
\end{equation*}
Using these facts, it follows that
\begin{align}\label{eq:46}
    \mathcal{L}_{\bar{\beta}}(x_{t+1},y_{t+1},z_t;\bar{M}^{t+1}) &- \mathcal{L}_{\bar{\beta}}(x_t,y_{t+1},z_t;\bar{M}^{t+1}) \\ =& g^{t+1}(x_{t+1}) - g^{t+1}(x_t) - D_\phi(x_{t+1},x_t) + D_{\phi}(x_t, x_t) \nonumber \\ =& \nonumber g^{t+1}(x_{t+1}) - g^{t+1}(x_t) - D_\phi(x_{t+1},x_t) \\ \le& -\dfrac{\rho}{2}\|x_{t+1}-x_t\|^2 - D_\phi(x_{t+1},x_t) \nonumber \\ \le& -\dfrac{\rho}{2}\|x_{t+1}-x_t\|^2. \nonumber
\end{align}

To bound $\mathcal{L}_{\bar{\beta}}(x_t,y_{t+1},z_t;\bar{M}^{t+1}) - \mathcal{L}_{\bar{\beta}}(x_t,y_t,z_t;\bar{M}^{t+1})$, note that $y_{t+1}$ is the minimizer of $\mathcal{L}_{\bar{\beta}}(x_t,y,z_t;\bar{M}^{t+1})$ (we remind the reader that for $t\ge K_{stable}$, $\beta_t = \bar{\beta}$). Hence,
\begin{gather}\label{eq:43}
    \mathcal{L}_{\bar{\beta}}(x_t,y_{t+1},z_t;\bar{M}^{t+1}) - \mathcal{L}_{\bar{\beta}}(x_t,y_t,z_t;\bar{M}^{t+1}) \le 0.
\end{gather}

Finally, we bound the difference 
\begin{equation}\label{eq:1021}
    e_{t+1} := \mathcal{L}_{\bar{\beta}}(x_t,y_t,z_t;\bar{M}^{t+1}) - \mathcal{L}_{\bar{\beta}}(x_t,y_t,z_t;\bar{M}^{t})
\end{equation}
using \cref{cumulative_matrix_error_effects_are_finite}. 

\begin{lem}\label{cumulative_matrix_error_effects_are_finite}
    Suppose that \Cref{assum:1} and \Cref{assum:2} hold true. 
    Let $\theta_t$ be chosen according to the sampling regime in \Cref{definition:2}, and let $\{x_t, y_t, z_t\}_{t > 0}$ be the sequence generated by \Cref{alg:1}. Denote
\begin{equation*}
    e_{t+1} := \mathcal{L}_{\bar{\beta}}(x_t,y_t,z_t;\bar{M}^{t+1}) - \mathcal{L}_{\bar{\beta}}(x_t,y_t,z_t;\bar{M}^{t}).
\end{equation*}
Then, 
\begin{equation*}
    \sum\limits_{t=1}^\infty e_{t+1} < \infty
\end{equation*}
almost surely.
\end{lem}

Since the proof of the lemma is lengthy but highly technical, we prove it separately in \cref{sec:Appendix_A}.

%{\color{green} [HIGH] There are about 4 pages here, all to prove that $\sum\limits_{t=1}^\infty e_{t} < \infty$. We can't avoid proving it, but it can be moved to the appendix.}{\color{blue}[Done.]}

Combining \eqref{eq:1021}, \eqref{eq:44}, \eqref{eq:46}, \eqref{eq:43}, we can bound $\mathcal{L}_{\bar{\beta}}(x_{t+1},y_{t+1},z_{t+1};\bar{M}^{t+1}) - \mathcal{L}_{\bar{\beta}}(x_t,y_t,z_t;\bar{M}^{t})$ as follows.
\begin{align*}
    &\mathcal{L}_{\bar{\beta}}(x_{t+1},y_{t+1},z_{t+1};\bar{M}^{t+1}) - \mathcal{L}_{\bar{\beta}}(x_t,y_t,z_t;\bar{M}^{t}) \\&\le \dfrac{4}{\sigma \bar{\beta}} \left(\mu\|x_{t+1} - x_{t}\|^2 + \nu\|x_{t} - x_{t-1}\|^2 \right) + \dfrac{2}{\sigma \bar{\beta}}\left(\|\delta_{t+1}^T z_{t+1}\| + \|\delta_t^T z_t\| \right)^2 - \dfrac{\rho}{2}\|x_{t+1}-x_t\|^2 + e_{t+1}. \nonumber
\end{align*}

Set $d_{t+1}$ from \eqref{d_series_sum_converges} to be
\begin{equation*}
    d_{t+1} = \dfrac{2}{\sigma \bar{\beta}} \left(\|\delta_{t+1}^T z_{t+1}\| +\|\delta_t^T z_t\| \right)^2 + e_{t+1},
\end{equation*}
so that
\begin{align}\label{eq:47}
    &\mathcal{L}_{\bar{\beta}}(x_{t+1},y_{t+1},z_{t+1};\bar{M}^{t+1}) - \mathcal{L}_{\bar{\beta}}(x_t,y_t,z_t;\bar{M}^{t}) \\& \le \dfrac{4}{\sigma \bar{\beta}} \left(\mu^2\|x_{t+1} - x_{t}\|^2 + \nu^2\|x_{t} - x_{t-1}\|^2 \right) - \dfrac{\rho}{2}\|x_{t+1}-x_t\|^2 + d_{t+1}. \nonumber
\end{align}
The relation in \eqref{eq:47} is exactly the required bound \eqref{AL_delta_bound}.

To prove that the sequence $\{d_{t+1}\}_{t\geq 1}$ indeed satisfies \eqref{d_series_sum_converges}, note that by \Cref{assum:2} the sequence $\{z_t\}_{t>0}$ is bounded -- that is, there exists a constant $B>0$, such  that $\|z_t\| \le B$ for all $t$. Using this and the induced norm inequality,
\begin{equation*}
    d_{t+1} = \dfrac{2}{\sigma \bar{\beta}} \left(\|\delta_{t+1}^T z_{t+1}\| +\|\delta_t^T z_t\| \right)^2 + e_{t+1} \le \dfrac{2B^2}{\sigma \bar{\beta}} \left(\|\delta_{t+1}\| +\|\delta_t\| \right)^2 + e_{t+1}.
\end{equation*}
Applying the inequality $(a+b)^2 \le 2a^2 + 2b^2$,
\begin{equation*}
    d_{t+1} \le \dfrac{2B^2}{\sigma \bar{\beta}} \left(\|\delta_{t+1}\| +\|\delta_t\| \right)^2 + e_{t+1} \le \dfrac{4B^2}{\sigma \bar{\beta}} \left(\|\delta_{t+1}\|^2 +\|\delta_t\|^2 \right) + e_{t+1}.
\end{equation*}

As we already argued above, by our choice of $\theta_t$ it holds that $\mathbb{P} \left(\|\delta_t\| > O \left(\dfrac{1}{t^{0.5 + 0.25\epsilon}} \right) \; i.o. \right) = 0$. Combined with \cref{cumulative_matrix_error_effects_are_finite}, the following holds almost surely:
\begin{equation}\label{eq:21}
    \sum\limits_{t=\bar{K}+1}^{\infty}d_{t+1} < \infty.
\end{equation}
This concludes the proof of \eqref{d_series_sum_converges}, and the third milestone of the proof. 

\medskip

\noindent \textbf{Milestone 4: Establishing Relation \eqref{AL_bound}.}\\
Utilizing \eqref{eq:47} obtained in the proof of the third milestone, for any $q \ge p+1 \ge K_{stable}$ we have that
\begin{align*}
    &\mathcal{L}_{\bar{\beta}}(x_q,y_q,z_q;\bar{M}^q) - \mathcal{L}_{\bar{\beta}}(x_p,y_p,z_p;\bar{M}^{p}) \\& \le \dfrac{1}{2}\sum\limits_{t=p}^{q} \left(\dfrac{8\mu^2}{\sigma \bar{\beta}} - \rho \right)\|x_{t+1}-x_t\|^2 + \dfrac{1}{2}\sum\limits_{t=p}^{q} \dfrac{8 \nu^2}{\sigma \bar{\beta}}\|x_t-x_{t-1}\|^2 +\sum\limits_{t=p}^{q} d_{t+1}.
\end{align*}
Since 
\begin{equation*}
    \dfrac{1}{2}\sum\limits_{t=p}^{q} \dfrac{8 \nu^2}{\sigma \bar{\beta}}\|x_t-x_{t-1}\|^2 = \dfrac{1}{2}\sum\limits_{t=p-1}^{q-1} \dfrac{8 \nu^2}{\sigma \bar{\beta}}\|x_{t+1}-x_{t}\|^2,
\end{equation*}
it follows that
\begin{align*}
    &\mathcal{L}_{\bar{\beta}}(x_q,y_q,z_q;\bar{M}^q) - \mathcal{L}_{\bar{\beta}}(x_p,y_p,z_p;\bar{M}^{p}) \\& \le  \dfrac{1}{2}\sum\limits_{t=p}^{q}\left(\dfrac{8\mu^2}{\sigma \bar{\beta}} - \rho \right)\|x_{t+1}-x_t\|^2 + \dfrac{1}{2}\sum\limits_{t=p-1}^{q-1} \dfrac{8 \nu^2}{\sigma \bar{\beta}}\|x_{t+1}-x_{t}\|^2 +\sum\limits_{t=p}^{q} d_{t+1}.
\end{align*}
Separating the last element from the first summation, and the first element from the second summation, 
\begin{align*}
    &\mathcal{L}_{\bar{\beta}}(x_q,y_q,z_q;\bar{M}^q) - \mathcal{L}_{\bar{\beta}}(x_p,y_p,z_p;\bar{M}^{p}) \\& \le  \dfrac{1}{2}\sum\limits_{t=p}^{q-1} \left(\dfrac{8\mu^2 + 8\nu^2}{\sigma \bar{\beta}} - \rho \right)\|x_{t+1}-x_t\|^2 + \dfrac{1}{2} \left(\dfrac{8\mu^2}{\sigma \bar{\beta}} - \rho\right)\|x_{q+1}-x_q\|^2 \\ &\quad + \dfrac{4\nu^2}{\sigma \bar{\beta}}\|x_{p}-x_{p-1}\|^2 +\sum\limits_{t=p}^{q} d_{t+1}.
\end{align*}
% todo Using the empirical Lip constant and its properties

Recall that $\mu := L^e_{\nabla h + \nabla \phi}(\{x_t\}_{t\ge0})$, $\nu = L^e_{\nabla \phi}(\{x_t\}_{t\ge0})$, as defined in \eqref{eq:empirical_lipschitz_constant_h_plus_phi}, \eqref{eq:empirical_lipschitz_phi}. 
By \cref{adaptive_penalty_oracle_requirements_defin}
\begin{equation*}
   \dfrac{8\left(L^e_{\nabla h + \nabla \phi}(\{x_t\}_{t\ge0})\right)^2}{\sigma \bar{\beta}} - \rho = \dfrac{8\mu^2}{\sigma \bar{\beta}} - \rho < 0.
\end{equation*}

Therefore,
\begin{align}\label{eq:15}
    &\mathcal{L}_{\bar{\beta}}(x_q,y_q,z_q;\bar{M}^q) - \mathcal{L}_{\bar{\beta}}(x_p,y_p,z_p;\bar{M}^p) \\&\le -\dfrac{1}{2}\sum\limits_{t=p}^{q-1} \left(\rho-\dfrac{8\mu^2 + 8\nu^2}{\sigma \bar{\beta}}\right)\|x_{t+1}-x_t\|^2 + \dfrac{4\nu^2}{\sigma \bar{\beta}}\|x_{p}-x_{p-1}\|^2 +\dfrac{1}{2}\sum\limits_{t=p}^{q} d_{t+1}. \nonumber
\end{align}

Utilizing \cref{adaptive_penalty_oracle_requirements_defin} once again, we have that, 
\begin{equation*}
    \rho-\dfrac{8 \left(L^e_{\nabla h + \nabla \phi}(\{x_t\}_{t\ge0})\right)^2 + 8 \left(L^e_{\nabla \phi}(\{x_t\}_{t\ge0})\right)^2}{\sigma \bar{\beta}} = \rho - \dfrac{8\mu^2 + 8\nu^2}{\sigma \bar{\beta}} > 0,
\end{equation*}
and therefore $-\dfrac{1}{2}\cdot \left(\rho - \dfrac{8\mu^2 + 8\nu^2}{\sigma \bar{\beta}}\right) < 0$ almost surely; denote
\begin{equation*}
    C_1 = -\dfrac{1}{2}\cdot \left(\rho - \dfrac{8\mu^2 + 8\nu^2}{\sigma \bar{\beta}}\right).
\end{equation*}
That is,
\begin{equation}\label{eq:1030}
\begin{aligned}
     \mathcal{L}_{\bar{\beta}}(x_q,y_q,z_q;\bar{M}^q) - \mathcal{L}_{\bar{\beta}}(x_p,y_p,z_p;\bar{M}^p) \le \sum\limits_{t=p}^{q-1} C_1 \|x_{t+1}-x_t\|^2 &+ \dfrac{4\nu^2}{\sigma \bar{\beta}}\|x_{p}-x_{p-1}\|^2\\
     &+\dfrac{1}{2}\sum\limits_{t=p}^{q} d_{t+1}.
\end{aligned}
\end{equation}

To reduce clutter, denote $\bar{K} = K_{stable}$. 
Rearranging elements in \eqref{eq:1030}  with $q=t$ and $p=\bar{K}+1$ and using the scalar random variable 
%\begin{align}
%    &\mathcal{L}_{\bar{\beta}}(x_{t},y_{t},z_{t}; \bar{M^{t}}) - \mathcal{L}_{\bar{\beta}}(x_{\bar{K}+1},y_{\bar{K}+1},z_{\bar{K}+1}; \bar{M}^{\bar{K}+1}) \\& \le \sum\limits_{i=\bar{K} + 1}^{t - 2} C_1 \|x_{i+1}-x_i\|^2 + \dfrac{4\nu}{\sigma \bar{\beta}}\|x_{\bar{K}+1}-x_{\bar{K}}\|^2 + \sum\limits_{i=\bar{K}+1}^{t}d_{i+1}. \nonumber
%\end{align}

%Rearranging,
%\begin{align*}
%    &\mathcal{L}_{\bar{\beta}}(x_{t},y_{t},z_{t}; \bar{M^{t}}) \\& \le \sum\limits_{i=\bar{K} + 1}^{t - 2} C_1 \|x_{i+1}-x_i\|^2 + \sum\limits_{i=\bar{K}+1}^{t}d_{i+1} + \dfrac{4\nu}{\sigma \bar{\beta}}\|x_{\bar{K}+1} -x_{\bar{K}}\|^2 \\&+ \mathcal{L}_{\bar{\beta}}(x_{\bar{K}+1},y_{\bar{K}+1},z_{\bar{K}+1}; \bar{M}^{\bar{K}+1}).
%\end{align*}
%We denote 
\begin{equation*}
    C_2 = \dfrac{4\nu^2}{\sigma \bar{\beta}}\|x_{\bar{K}+1} -x_{\bar{K}}\|^2 + \mathcal{L}_{\bar{\beta}}(x_{\bar{K}+1},y_{\bar{K}+1},z_{\bar{K}+1}; \bar{M}^{\bar{K}+1}),
\end{equation*}
%and therefore,
we have that
\begin{align*}
    \mathcal{L}_{\bar{\beta}}(x_{t},y_{t},z_{t}; \bar{M^{t}})  \le \sum\limits_{i=\bar{K} + 1}^{t - 2} C_1 \|x_{i+1}-x_i\|^2 + \sum\limits_{i=\bar{K}+1}^{t}d_{i+1} + C_2.
\end{align*}

By \Cref{assum:2}, there exists $B > 0$ such that $\|x_t\|$ is bounded by $B$ for all $t$. Therefore, 
\begin{equation}\label{eq:1040}
    \dfrac{4\nu^2}{\sigma \bar{\beta}}\|x_{\bar{K}+1} -x_{\bar{K}}\|^2 \le \dfrac{16\nu^2 B^2}{\sigma \bar{\beta}} < \infty.
\end{equation}

Since $P$ is proper, there exists $\tilde{y}$ such that $P(\tilde{y}) < \infty$.
Since $P(\tilde{y}) < \infty$ and all other components are continuous,
\begin{equation*}
    \mathcal{L}_{\bar{\beta}}(x_{\bar{K}}, \tilde{y},z_{\bar{K}};\bar{M}^{\bar{K}+1}) < \infty.
\end{equation*}
Note that $y_{\bar{K}+1}$ is the minimizer of $\mathcal{L}_{\bar{\beta}}(x_{\bar{K}}, y, z_{\bar{K}}; \bar{M}^{\bar{K}+1})$ with respect to $y$, and hence,
\begin{equation*}
    \mathcal{L}_{\bar{\beta}}(x_{\bar{K}}, y_{\bar{K}+1}, z_{\bar{K}}; \bar{M}^{\bar{K}+1}) \le  \mathcal{L}_{\bar{\beta}}(x_{\bar{K}}, \tilde{y},z_{\bar{K}};\bar{M}^{\bar{K}+1}) < \infty.
\end{equation*}
Therefore, $P(y_{\bar{K}+1}) < \infty$. Since $P$ is the only component of $\mathcal{L}_{\bar{\beta}}$ that can take the value $+\infty$ for finite values of $x,y,z$, and $P(y_{\bar{K}+1}) < \infty$, it follows that
\begin{equation}\label{eq:1050}
     \mathcal{L}_{\bar{\beta}}(x_{\bar{K}+1}, y_{\bar{K}+1}, z_{\bar{K}+1}; \bar{M}^{\bar{K}+1}) < \infty.
\end{equation}
Combining \eqref{eq:1040} and \eqref{eq:1050} implies that with probability 1, $ C_2 < \infty.$
%\begin{equation*}
%    C_2 < \infty.
%\end{equation*}
This concludes the proof of \eqref{AL_bound}, which is the fourth and last milestone of the proof.
%\Halmos 
\end{proof}

\begin{proof}[{\cref{cumulative_matrix_error_effects_are_finite}}]
Recall that
\begin{equation*}
    \mathcal{L}_{\beta}(x, y, z; A) = h(x) + P(y) - \langle z, Ax - y \rangle + \dfrac{\beta}{2}\|Ax - y\|^2,
\end{equation*}
which yields that
\begin{equation*}
    e_{t+1} = \langle z_t, \bar{M}^t x_t - \bar{M}^{t+1}x_t \rangle + \dfrac{\bar{\beta}}{2}\left(\|\bar{M}^{t+1} x_t - y_t\|^2 - \|\bar{M}^{t} x_t - y_t\|^2\right).
\end{equation*}
\iffalse
We note that if $\mathcal{L}_{\beta}(x,y,z)$ is proper, then $\mathcal{L}_{\beta}(x,y,z; A)$ is proper if $A$ is finite. This holds because
\begin{align*}
    \mathcal{L}_{\beta}(x,y,z; A) &= h(x) + P(y) - \langle z, Ax \rangle + \langle z, y \rangle + \dfrac{\beta_t}{2}\|A x - y\|^2 \\& = \mathcal{L}_{\beta}(x,y,z) + \langle z, \mathbb{E}[M] - Ax \rangle + \beta \langle (A - \mathbb{E}[M])x, \mathbb{E}[M]x - y \rangle + \dfrac{\beta}{2}\|(A - \mathbb{E}[M])x\|^2.
\end{align*}

By \Cref{assum:1}, $Var[M]< \infty$. Therefore, $M$ is finite almost surely. Hence, $\mathcal{L}_{\beta}(x,y,z; \bar{M}^t)$ is proper almost surely for every $t>0$ and every $x,y,z$. Furthermore, since $\bar{M}^t \xrightarrow{t\to\infty} \mathbb{E}[M]$ and $\mathcal{L}_{\beta}(\cdot, \cdot, \cdot; A)$ is continuous in $A$, $\mathcal{L}_{\beta}(x,y,z;\bar{M}^t)\xrightarrow{t\to\infty} \mathcal{L}_{\beta}(x,y,z;\mathbb{E}[M]) = \mathcal{L}_{\beta}(x,y,z)$.
\fi
%For convenience, denote the difference between the AL functions with respect to $\bar{M}^{t+1}$ and $\bar{M}^t$ by $e_{t+1}$
%\begin{equation}\label{eq:1021}
%    e_{t+1} = \mathcal{L}_{\bar{\beta}}(x_t,y_t,z_t;\bar{M}^{t+1}) - \mathcal{L}_{\bar{\beta}}(x_t,y_t,z_t;\bar{M}^{t}).
%\end{equation}

%We will derive a bound for $e_{t+1}$

%\begin{equation*}
%    e_{t+1} = \langle z_t, \bar{M}^t x_t - \bar{M}^{t+1}x_t \rangle + \dfrac{\bar{\beta}}{2}\left(\|\bar{M}^{t+1} x_t - y_t\|^2 - \|\bar{M}^{t} x_t - y_t\|^2\right).
%\end{equation*}

Since 
\begin{align*}
    \|\bar{M}^{t+1} x_t &- y_t\|^2 - \|\bar{M}^{t} x_t - y_t\|^2 \\ =& \|\bar{M}^{t+1} x_t\|^2 - 2x_t^T \left(\bar{M}^{t+1}\right)^T y_t + \|y_t\|^2 - \|\bar{M}^{t} x_t\|^2 + 2x_t^T \left(\bar{M}^{t}\right)^T y_t - \|y_t\|^2 \\ =& \|\bar{M}^{t+1} x_t\|^2 - \|\bar{M}^{t} x_t\|^2 - 2x_t^T \left(\bar{M}^{t+1} - \bar{M}^t\right)y_t.
\end{align*}

It follows that
\begin{equation}\label{eq:1010}
    e_{t+1} = \langle z_t,\bar{M}^t x_t - \bar{M}^{t+1}x_t \rangle + \dfrac{\bar{\beta}}{2}\left(\|\bar{M}^{t+1} x_t\|^2 - \|\bar{M}^{t} x_t\|^2 - 2x_t^T \left(\bar{M}^{t+1} - \bar{M}^t\right)y_t\right).
\end{equation}
Utilizing the fact that $\bar{M}^{t+1} = \mathbb{E}[M] + \delta_{t+1}$, $\bar{M}^{t} = \mathbb{E}[M] + \delta_{t}$, we have that
\begin{align*}
    \|\bar{M}^{t+1} x_t\|^2 &- \|\bar{M}^{t} x_t\|^2 \\ =& \|\mathbb{E}[M] x_t + \delta_{t+1}x_t\|^2 - \|\mathbb{E}[M] x_t + \delta_{t}x_t\|^2 \\ =&
    \|\mathbb{E}[M] x_t\|^2 + 2x_t^T \mathbb{E}[M]^T \delta_{t+1} x_t + \|\delta_{t+1}x_t\|^2 - \|\mathbb{E}[M] x_t\|^2 - 2 x_t^T \mathbb{E}[M]^T \delta_{t} x_t \\&\;\;- \|\delta_t x_t\|^2 \\ =& 2x_t^T \mathbb{E}[M]^T \left(\delta_{t+1} -\delta_t\right) x_t + \|\delta_{t+1}x_t\|^2 - \|\delta_t x_t\|^2.
\end{align*}
Plugging it into \eqref{eq:1010} then results with
\begin{align*}
    e_{t+1} = \langle z_t,\bar{M}^t x_t - \bar{M}^{t+1}x_t \rangle &+ \dfrac{\bar{\beta}}{2}(2x_t^T \mathbb{E}[M]^T \left(\delta_{t+1} -\delta_t \right) x_t \\
    &+ \|\delta_{t+1}x_t\|^2 - \|\delta_t x_t\|^2 - 2x_t^T \left(\bar{M}^{t+1} - \bar{M}^t\right)y_t).
\end{align*}

Using the fact that 
\begin{equation*}
    \delta_{t+1} - \delta_t = \bar{M}^{t+1} - \mathbb{E}[M] - \left(\bar{M}^t - \mathbb{E}[M]\right) = \bar{M}^{t+1} - \bar{M}^t,
\end{equation*}
we can update our previous equation for $e_{t+1}$ to
\begin{align}\label{eq:50}
    e_{t+1} = \langle z_t,\bar{M}^t x_t - \bar{M}^{t+1}x_t \rangle &+ \dfrac{\bar{\beta}}{2}(2x_t^T \mathbb{E}[M]^T \left(\bar{M}^{t+1} - \bar{M}^t \right) x_t \\ &+ \|\delta_{t+1}x_t\|^2 - \|\delta_t x_t\|^2 - 2x_t^T \left(\bar{M}^{t+1} - \bar{M}^t \right)y_t ) \nonumber.
\end{align}

We denote by $\hat{M}^{t+1}$ the average of the $\theta_{t+1}-\theta_t$ samples taken in round $t+1$. Formally,
\begin{equation*}
    \hat{M}^{t+1} = \dfrac{1}{\theta_{t+1} - \theta_t} \sum\limits_{i=\theta_t + 1}^{\theta_{t+1}} M^i.
\end{equation*}
Furthermore, we denote $\hat{\delta_t} = \hat{M}^t - \mathbb{E}[M]$. By the update rule of $\bar{M}^{t+1}$ and the definition of $\hat{\delta_t}$ and $\delta_t$
\begin{equation}\label{eq:1080}
\begin{aligned}
    \bar{M}^{t+1} - \bar{M}^t  = \dfrac{\theta_t}{\theta_{t+1}}\bar{M}^t + \dfrac{\theta_{t+1}-\theta_t}{\theta_{t+1}}\hat{M}^{t+1} - \bar{M}^t  =& \dfrac{\theta_{t+1}-\theta_t}{\theta_{t+1}} \left(\hat{M}^{t+1} - \bar{M}^{t} \right) \\ 
%    =&     \dfrac{\theta_{t+1}-\theta_t}{\theta_{t+1}}\left(\mathbb{E}[M] + \hat{\delta}_{t+1} - \mathbb{E}[M] - \delta_{t}\right) \\ 
    =& \dfrac{\theta_{t+1}-\theta_t}{\theta_{t+1}}\left(\hat{\delta}_{t+1} - \delta_{t}\right).
\end{aligned}
\end{equation}
To reduce clutter, we denote 
\begin{equation*}
    s_{t+1} = \dfrac{\theta_{t+1}-\theta_t}{\theta_{t+1}}.
\end{equation*}
Introducing this notation to \eqref{eq:1080}, we derive
\begin{equation}\label{eq:51}
    \bar{M}^{t+1} - \bar{M}^t = s_{t+1} \left( \hat{\delta}_{t+1} - \delta_t \right).
\end{equation}
Combining \eqref{eq:50} and \eqref{eq:51} results with
\begin{align*}
    e_{t+1} &= \langle z_t,-s_{t+1}(\hat{\delta}_{t+1} - \delta_{t})x_t \rangle \\&+ \dfrac{\bar{\beta}}{2} \left(2x_t^T \mathbb{E}[M]^T \left(s_{t+1} \left(\hat{\delta}_{t+1} - \delta_{t} \right)\right) x_t + \|\delta_{t+1}x_t\|^2 - \|\delta_t x_t\|^2 - 2x_t^T \left(s_{t+1}\left(\hat{\delta}_{t+1} - \delta_{t}\right)\right)y_t\right),
\end{align*}
which in particular implies that
\begin{align*}
    e_{t+1} &\le \langle z_t,-s_{t+1} \left(\hat{\delta}_{t+1} - \delta_{t} \right)x_t \rangle \\&+ \dfrac{\bar{\beta}}{2}\left(2x_t^T \mathbb{E}[M]^T \left(s_{t+1}\left(\hat{\delta}_{t+1} - \delta_{t}\right)\right) x_t + \|\delta_{t+1}x_t\|^2 - 2x_t^T \left(s_{t+1}\left(\hat{\delta}_{t+1} - \delta_{t}\right)\right)y_t\right).
\end{align*}
By the Cauchy-Schwarz and the triangle inequalities,
\smaller
\begin{align*}
    e_{t+1} & \le s_{t+1}\| z_t\|\left(\|\hat{\delta}_{t+1}\| + \|\delta_{t}\|\right)\|x_t\| \\&+ s_{t+1} \dfrac{\bar{\beta}}{2}\left(\|\mathbb{E}[M]x_t\| \left(\|\hat{\delta}_{t+1}\| + \|\delta_{t}\| \right) \|x_t\| + 2\|x_t\| \left(\|\hat{\delta}_{t+1}\| + \|\delta_{t}\| \right) \|y_t\| + \|\delta_{t+1}\|^2\cdot \|x_t\|^2\right).
\end{align*}
\normalsize

By \Cref{assum:2}, there exists $B>0$, such that $B \ge \sup\limits_{t \ge 0} \{\max \{ \|x_t\|, \|\mathbb{E}[M] x_t\|, \|y_t\|, \|z_t\|\}\}$. 
Using this fact, we have that
\smaller
\begin{equation}\label{eq:1086}
\begin{aligned}
    e_{t+1} & \le s_{t+1}B^2 \left(\|\hat{\delta}_{t+1}\| + \|\delta_{t}\| \right) + s_{t+1}B^2 \dfrac{\bar{\beta}}{2} \left( \left(\|\hat{\delta}_{t+1}\| + \|\delta_{t}\|\right) + 2 \left(\|\hat{\delta}_{t+1}\| + \|\delta_{t}\| \right) + \|\delta_{t+1}\|^2 \right) \\&= \left(\dfrac{3\bar{\beta}}{2} + 1 \right)B^2 s_{t+1} \left(\|\hat{\delta}_{t+1}\| + \|\delta_t\| \right) + \dfrac{\bar{\beta}}{2} B^2 s_{t+1} \|\delta_{t+1}\|^2.
\end{aligned}
\end{equation}
\normalsize
Let $\kappa \in \{\epsilon, 1 + \epsilon\}$ be chosen according to sampling regime described by \Cref{definition:2}. 
By the Taylor series expansion of $(t+1)^{1+\kappa}$,
\begin{equation*}
    (t+1)^{1+\kappa} = t^{1+\kappa} + \dfrac{1}{1!}(t+1-t)(1+\kappa)t^{\kappa} + O(t^{-1 + \kappa}) = t^{1+\kappa} + (1+\kappa)t^{\kappa} + O(t^{-1 + \kappa}). 
\end{equation*}
Therefore, there exist constants $A_1, A_2 > 0$, such that for all $t \ge 1$, it holds that $  t^{1+\kappa} + A_1 t^{\kappa} \le (t+1)^{1+\kappa} \le t^{1+\kappa} + A_2 t^{\kappa}.$
%\begin{equation*}
%    t^{1+\kappa} + A_1 t^{\kappa} \le (t+1)^{1+\kappa} \le t^{1+\kappa} + A_2 t^{\kappa}.
%\end{equation*}
Hence, it follows that
\begin{equation}\label{eq:1090}
    s_{t+1} = \dfrac{\theta_{t+1}-\theta_t}{\theta_{t+1}} = \dfrac{(t+1)^{1+\kappa}-t^{1+\kappa}}{(t+1)^{1+\kappa}} \le \dfrac{t^{1+\kappa} + A_2 t^{\kappa}-t^{1+\kappa}}{(t+1)^{1+\kappa}} = \dfrac{A_2 t^\kappa}{(t+1)^{1+\kappa}} \le \dfrac{A_2 t^\kappa}{t^{1+\kappa}} = \dfrac{A_2}{t}.
\end{equation}
Now our mission boils down to producing (probabilistic) bounds on $\|\delta_t\|$ and $\|\hat{\delta}_{t}\|$.
By \Cref{lem:1} and \Cref{lem:2} , $\mathbb{P} \left(\|\delta_t\| > O \left(\dfrac{1}{t^{0.5 + 0.25\epsilon}} \right) \; i.o. \right) = 0$. Therefore, with probability 1, there exists an index $\tau_1$ such that for all $t>\tau_1$, $\|\delta_t\| \le \dfrac{1}{\sqrt{t}}$.

\iffalse
It can also be shown that $\mathbb{P} \left(\|\hat{\delta}_{t}\| > O \left(\dfrac{1}{t^{0.25\epsilon}} \right)\; i.o.\right) = 0$. If \Cref{assum:4} holds, the proof is almost identical to \Cref{lem:2}, the main difference being the use of $\theta_{t+1}-\theta_{t}$ instead of $\theta_{t+1}$, and the use of the bound $\theta_{t+1}-\theta_{t} \ge A_1 t^{\epsilon}$. If \Cref{assum:4} does not hold, then the proof is almost identical to \Cref{lem:1}, with the only differences being that $\theta_t$ should be replaced with $\theta_{t+1} - \theta_t$ when deriving the bound $Var[[\hat{\delta_{t+1}}]_{i,j}] = \dfrac{1}{\theta_{t+1}-\theta_t}Var[M_{i,j}]$, that we should choose $\eta = \dfrac{1}{t^{0.25\epsilon}}$ when using Chebyshev's inequality, and that $\theta_{t+1}-\theta_t$ should be replaced with its lower bound when \Cref{assum:4} does not hold, $A_1 t^{1+\epsilon}$, in the subsequent parts of the proof.
\fi
% New version begins here
Using similar arguments to those used in \Cref{lem:1} and \Cref{lem:2}, it can also be shown that $\mathbb{P} \left(\|\hat{\delta}_{t}\| > O \left(\dfrac{1}{t^{0.25\epsilon}} \right)\; i.o.\right) = 0$.
We provide a sketch of the formal adaptions required below.
\begin{itemize}
    \item If \Cref{assum:4} holds true, the proof is almost identical to \Cref{lem:2}, with the following differences:
    \begin{enumerate}
        \item We use $\theta_{t+1}-\theta_t$ instead of $\theta_t$ in the definition of $\hat{\delta}_t$:
        \begin{equation*}
            \hat{\delta}_{t+1} = \hat{M}^{t+1} - \mathbb{E}[M] = \sum\limits_{l = \theta_t}^{\theta_{t+1}}\left(\dfrac{1}{\theta_{t+1}-\theta_t} \left(M^l - \mathbb{E}[M]\right)\right).
        \end{equation*}
        \item Subsequently, we derive an upper bound for 
        \begin{equation*}
            \mathbb{P} \left( |[\hat{\delta}_{t+1}]_{i,j}| > \dfrac{1}{t^{0.25 \epsilon}}\right),
        \end{equation*}
        instead of 
        \begin{equation*}
             \mathbb{P} \left( \left|[\delta_t]_{i,j} \right| > \dfrac{1}{t^{0.5 + 0.25 \cdot \epsilon}} \right).
        \end{equation*}
        \item Before deriving 
        \begin{equation*}
            \mathbb{P} \left( \left|[\delta_t]_{i,j} \right| > \dfrac{1}{t^{0.5 + 0.25 \cdot \epsilon}} \right) \le 2 \exp \left(-C \cdot t^{0.5\epsilon} \right),
        \end{equation*} 
        We use the lower bound for $\theta_{t+1}-\theta_t$ under \Cref{assum:4}, $A_1 t^\epsilon$.
    \end{enumerate}

    \item If \Cref{assum:4} does not hold, the proof is almost identical to \Cref{lem:1}, with the following differences:
    \begin{enumerate}
        \item We replace  $ \delta_t$ with  $ \hat{\delta}_{t+1} := \hat{M}^{t+1} - \mathbb{E}[M] = \dfrac{1}{\theta_{t+1}-\theta_t} \sum\limits_{l = \theta_t}^{\theta_{t+1}}\left(M^l - \mathbb{E}[M]\right) $.% $ \delta_t = \bar{M}^t - \mathbb{E}[M]$.
%        \begin{equation*}
%            \hat{\delta}_{t+1} = \hat{M}^{t+1} - \mathbb{E}[M] = \dfrac{1}{\theta_{t+1}-\theta_t} \sum\limits_{l = \theta_t}^{\theta_{t+1}}\left(M^l - \mathbb{E}[M]\right)
%        \end{equation*}
%        instead of 
%        \begin{equation*}
%            \delta_t = \bar{M}^t - \mathbb{E}[M] = \dfrac{1}{\theta_t} \sum\limits_{l = 1}^{\theta_t}\left(M^l - \mathbb{E}[M]\right).
%        \end{equation*}

        \item Subsequently, the expression for $Var[[\hat{\delta}_{t+1}]_{i,j}]$ in 
        \begin{equation*}
            Var[[\delta_t]_{i,j}] = \dfrac{1}{\theta_t} Var[M_{i,j}].
        \end{equation*} 
        will be replaced by
        \begin{equation*}
            Var[[\delta_t]_{i,j}] = \dfrac{1}{\theta_{t+1} - \theta_t} Var[M_{i,j}].
        \end{equation*}

        \item In the application of Chebyshev's inequality that follows immediately afterwards, we modify our choice of $\eta$, from $\dfrac{1}{t^{0.5 + 0.25\epsilon}}$ to $\dfrac{1}{t^{0.25\epsilon}}$.

        \item Finally, we use the lower bound for $\theta_{t+1}-\theta_t$ if \Cref{assum:4} does not hold, $A_1 t^{1+\epsilon}$, yielding the replacement 
        \begin{equation*}
            \mathbb{P}\left(|[\hat{\delta}_{t+1}]_{i,j}| > \dfrac{1}{t^{0.25 \epsilon}} \right) \le \dfrac{Var[M_{i,j}]}{A_1 t^{1+0.5\epsilon}}.
        \end{equation*}
        instead of the original expression 
        \begin{equation*}
            \mathbb{P}\left(|[\delta_t]_{i,j}| > \dfrac{1}{t^{0.5 + 0.25 \epsilon}} \right) \le \dfrac{Var[M_{i,j}]}{t^{1+0.5\epsilon}}.
        \end{equation*}
    \end{enumerate}
\end{itemize}
% New version ends here
Since the proof process is identical to the one shown in \Cref{lem:1} and \Cref{lem:2}, other than the minor differences pointed out above, we do not provide the complete proof.

Returning to the main stream of the proof of the third milestone, we have, due to $\mathbb{P} \left(\|\hat{\delta}_{t}\| > O \left(\dfrac{1}{t^{0.25\epsilon}} \right)\; i.o.\right) = 0$ justified above, that there exists with probability 1 an index $\tau_2$, so that for all $t > \tau_2$
\begin{equation}\label{eq:1095}
    \|\hat{\delta}_{t}\| \le O \left(\dfrac{1}{t^{0.25 \epsilon}} \right).
\end{equation}

Utilizing \eqref{eq:1086}, \eqref{eq:1090} and \eqref{eq:1095}, we can show that for all sufficiently large $t$, $e_{t+1} \le O\left(\dfrac{1}{t^{1 + 0.25 \epsilon}}\right)$. 
Indeed, for any $t > \tau \equiv \max\{\tau_1, \tau_2\}$, there exist constants $A_3, A_4$ so that
\begin{equation*}
    e_{t+1} \le \left(\dfrac{3 \bar{\beta}}{2} + 1 \right)B^2 \dfrac{A_2}{t} \left(\dfrac{A_3}{t^{0.25 \epsilon}} + \dfrac{A_4}{\sqrt{t}} \right) + \dfrac{\bar{\beta}B^2}{2} \dfrac{A_2}{t} \cdot \dfrac{A_4^2}{t},
\end{equation*}
where $A_3$, $A_4$ are the scalars whose existence is implied by the notations $\|\hat{\delta}_t\| \le O\left(\dfrac{1}{t^{0.25\epsilon}}\right)$ and $\|\delta_t\| \le O\left(\dfrac{1}{t^{0.5 + 0.25\epsilon}}\right)$.

Consequently, it follows  with probability $1$ that
\begin{equation}\label{eq:49}
    \sum\limits_{t=1}^\infty e_t < \infty.
\end{equation}
\end{proof}

% Acknowledgments here
% Enter the text of acknowledgments here

\textbf{Acknowledgments.}

% References here (outcomment the appropriate case) 

% CASE 1: BiBTeX used to constantly update the references 
%   (while the paper is being written).
\bibliographystyle{spmpsci} % outcomment this and next line in Case 1
\bibliography{AL_bib} % if more than one, comma separated

\end{document}